\title{Lecture notes on gradient flows and optimal transport}
\author{Sara Daneri}
\address{ S.I.S.S.A., Via Beirut 2-4, {34151}, Trieste, Italy}
\email{daneri@sissa.it}
\author{Giuseppe Savar\'e}
\address{Dipartimento di Matematica, Universit\`a di Pavia. Via Ferrata, 1 -- 27100 Pavia, Italy.}
\email{giuseppe.savare@unipv.it}
\urladdr{http://www.imati.cnr.it/\textasciitilde savare}
\newtheorem{theorem}{Theorem}[section]
\newtheorem{corollary}[theorem]{Corollary}
\newtheorem{lemma}[theorem]{Lemma}
\newtheorem{proposition}[theorem]{Proposition}
\newtheorem{definition}[theorem]{Definition}
\theoremstyle{definition}
\newtheorem{example}[theorem]{Example}
\theoremstyle{remark}
\newtheorem{remark}[theorem]{Remark}
\newcommand{\N}{\mathbb{N}}
\newcommand{\R}{\mathbb{R}}
\newcommand{\BB}{\mathscr{B}}
\newcommand{\LL}{\mathscr{L}}
\newcommand{\PP}{\mathscr{P}}
\newcommand{\VV}{\mathscr{V}}
\newcommand{\WW}{\mathscr{W}}
\newcommand{\cP}{{\ensuremath{\mathcal P}}}
\newcommand{\bb}{{\mbox{\boldmath$b$}}}
\newcommand{\ii}{{\mbox{\boldmath$i$}}}
\newcommand{\rr}{{\mbox{\boldmath$r$}}}
\renewcommand{\tt}{{\mbox{\boldmath$t$}}}
\newcommand{\vv}{{\mbox{\boldmath$v$}}}
\newcommand{\ggamma}{{\mbox{\boldmath$\gamma$}}}
\newcommand{\eeta}{{\mbox{\boldmath$\eta$}}}
\newcommand{\mmu}{{\mbox{\boldmath$\mu$}}}
\newcommand{\ttau}{{\mbox{\boldmath$\tau$}}}
\newcommand{\sfd}{{\sf d}}
\newcommand{\sfE}{{\sf E}}
\newcommand{\sfS}{{\sf S}}
\newcommand{\rmd}{{\mathrm d}}
\newcommand{\rme}{{\mathrm e}}
\newcommand{\rmD}{{\mathrm D}}
\newcommand{\Kliminf}{K\kern-3pt-\kern-2pt\mathop{\rm lim\,inf}\limits}  % Kuratowski liminf di insiemi
\newcommand{\supp}{\mathop{\rm supp}\nolimits}   % supporto 
\newcommand{\argmin}{\mathop{\rm argmin}\limits}   % argmin
\renewcommand{\d}{{\mathrm d}}
\newcommand{\dt}{{\d t}}
\newcommand{\restr}[1]{\lower3pt\hbox{$|_{#1}$}}
\newcommand{\topref}[2]{\stackrel{\eqref{#1}}#2}
\newcommand{\Leb}[1]{{\mathscr L}^{#1}}      % Misura di Lebesgue
\newcommand{\la}{{\langle}}                  % brackets
\newcommand{\ra}{{\rangle}}
\newcommand{\down}{\downarrow}              %frecce in su e in giu nei limiti
\newcommand{\up}{\uparrow}
\newcommand{\eps}{\varepsilon}  
\newcommand{\nchi}{{\raise.3ex\hbox{$\chi$}}}
\newcommand{\Rd}{{\R^d}}
\newcommand{\forevery}{\text{for every }}
\renewcommand{\d}{{\rmd}}
\renewcommand{\dt}{{\rmd t}}
\newcommand{\FlowName}{\ensuremath{\mathsf S}}
\newcommand{\Flow}[2]{\FlowName_{#1}(#2)}
\newcommand{\EVIshort}[1]{\ensuremath{\EVIname_{#1}}}
\newcommand{\EVI}[4]{\ensuremath{\EVIname_{#4}(#1,#2,#3)}}
\newcommand{\EVIname}{\ensuremath{\mathrm{EVI}}}
\newcommand{\Dom}[1]{\ensuremath{D(#1)}}
\newcommand{\DistSquare}[2]{\DistName^2(#1,#2)}               
\newcommand{\Dist}[2]{\DistName(#1,#2)}               
\newcommand{\DistName}{{\sf d}}          %Distanza
\newcommand{\Urd}{{\frac{ \d}{\d t}}}
\renewcommand{\sfE}[2]{{\sf E}_{#1}(#2)}
\newcommand{\DomainSlope}[1]{D(|\partial#1|)}                 % Dominio della pendenza
\newcommand{\MetricSlope}[2]{|\partial#1|(#2)}              %Pendenza di #1 valutata in #2
\newcommand{\MetricSlopeSquare}[2]{|\partial#1|^2(#2)}      %Pendenza al quadrato
\newcommand{\umin}{{\bar u}}
\newcommand{\Pc}[2]{\overline{#1}\kern-2pt^{\vphantom 0}_{#2}}
\newcommand{\taueta}{{\tau,\eta}}
\newcommand{\phinfty}{\phi^\infty}
\newcommand{\xinfty}{x^\infty}
\newcommand{\barxinfty}{\bar x^\infty}
\newcommand{\freccia}{\rightarrow}
\newcommand{\barra}{\backslash}
\renewcommand{\u}{u\sp{\prime}}
\renewcommand{\v}{v\sp{\prime}}
\numberwithin{equation}{section}
\newcommand{\UU}{\mathscr{U}}
\newcommand{\XX}{\mathscr{X}}
\newcommand{\mtE}{\mathsf{E}}
\newcommand{\mtS}{\mathsf{S}}
\newcommand{\rL}{\mathit{L}}
\newcommand{\Id}{\mbox{\boldmath${i}$}}
\renewcommand{\ttau}{\tau}
\newcommand{\unt}{U^{n}_{\ttau}}
\newcommand{\Ed}[2]{|#1-#2|}
\newcommand{\EdSquare}[2]{|#1-#2|^2}
\begin{document}
%\dedicatory{}
%\keywords{}
\begin{abstract}
  We present a short overview on the strongest variational formulation for
  gradient flows of geodesically $\lambda$-convex functionals in
  metric spaces, with applications to diffusion equations in
  Wasserstein spaces of probability measures.
  These notes are based on a series of lectures given by the second
  author for the Summer School ``Optimal transportation: Theory and applications'' in Grenoble during the week of June 22-26, 2009.
\end{abstract}
\maketitle

\section*{Introduction}
These notes are based on a series of lectures given by the second
  author for the Summer School ``Optimal transportation: Theory and
  applications'' in Grenoble during the week of June 22-26, 2009.

  We try to summarize some of the main results concerning gradient
  flows of geodesically $\lambda$-convex functionals in metric spaces
  and applications to diffusion PDE's in the Wasserstein space of
  probability measures.
  Due to obvious space constraints, the theory and the references
  presented here are largely incomplete and should be intended as a{n}
  over-simplified presentation of a quickly evolving subject.
  We refer to the books \cite{Ambrosio-Gigli-Savare05,Villani09}
  for a detailed account of 
  the large literature available on these topics.

  In the first section we collect some elementary and well known results
  concerning gradient flows of smooth convex functions in $\Rd$.
  We selected just a few topics, which are well suited for a
  ``metric'' formulation and provide a useful guide for the more
  abstract developments.

  In the second section we present the main (and strongest) notion of
  gradient flow in metric spaces {characterized} by the solution of a
  metric \emph{evolution
    variational inequality}: the aim here is to show the consequence
  of this definition, without any assumptions on the space and on the
  functional (except completeness and lower semicontinuity):
  we shall see that solutions to evolution variational inequalities enjoy nice
  stability, asymptotic, and regularization properties. We also investigate
  the relationships with two different approaches, 
  \emph{curves of maximal slope} and \emph{minimizing movements},
  and we discuss a first stability result with respect to
  perturbations of the generating functional with respect to
  $\Gamma$-convergence.

  The third section is devoted to some fundamental generation results
  for gradient flows of geodesically $\lambda$-convex functional{s}:
  here we adopt the method of \emph{minimizing
    movement} to construct suitable families of discrete approximating
  solutions
  and we show three basic convergence results.
  
  Apart from Sections \ref{subsec:stability} (stability of gradient
  flows with respect to $\Gamma$-convergence of the functionals)
  and \ref{subsec:compact} (existence of
  curves of maximal slope), we made a substantial effort to avoid
  any compactness argument in the theory, which is mainly focused on
  purely metric arguments. So we will present a slightly relaxed
  version of the minimizing movement scheme, which is always solvable
  by invoking Ekeland's variational principle, and
  the main existence and generation results for $\lambda$-gradient
  flows 
  rely on refined Cauchy estimates and
  crucial geometric assumptions on the distance of the metric
  space.

  The last section is devoted to applications of the metric theory to
  evolution equations in the so called ``Wasserstein spaces''
  $\PP_2(X)$ of probability measures. We recall a few basic facts about such
  spaces, the characterization of geodesics and absolutely continuous
  curves, and some geometric {properties} of the Wasserstein distance.
  Three basic examples of (or, better, displacement-)
  $\lambda$-convex functionals in $\PP_2(\Rd)$ are presented, together
  with the evolutionary PDE's they are associated with.
  A short account of possible extensions of the theory to
  measure-metric spaces concludes the notes.

%\begin{document}
\begin{section}{Gradient flows for smooth $\lambda$-convex functions in the Euclidean space}
In this section we recall some simple properties of the gradient
flow of a $C^2$
function $\varphi:\R^d\freccia\R$ satisfying the global lower bound
$\rmD^2\varphi\geq\lambda I$ for some $\lambda\in\R$. We will focus on
those aspects which {rely} just on the ``metric'' structure of $\R^d$
and therefore could make sense in more general metric spaces.
%We also assume for simplicity that $\underset{x\in\R^d}{\inf}
%\varphi>-\infty$ and
We denote by $\sfd(u,v)=|u-v|$ the Euclidean distance on $\R^d$
induced by the scalar product
$\la\cdot,\cdot\ra$.
\begin{remark}[A few basic facts about $\lambda$-convex functions]
  \upshape
  We wil{l} extensively use the following well known equivalent {characterizations} of a $\lambda$-convex function $\varphi:\R^d\to\R$
  (here $x,x_0,x_1$ are
  arbitrary points in $\R^d$)
  \begin{subequations}
    \begin{description}
    \item[Hessian inequality]
      \begin{equation}
        \label{eq:1}
        \rmD^2\varphi(x)\ge \lambda I\quad
        \text{i.e.}\quad\langle\rmD^2\varphi(x)\xi,\xi\rangle\ge
        \lambda|\xi|^2
        \quad\text{for every }\xi\in
        \R^d.
      \end{equation}
     \item[$\lambda$-monotonicity of $\nabla\varphi$]
        \begin{equation}
          \label{eq:2}
          \langle \nabla\varphi(x_0)-\nabla\varphi(x_1),x_0-x_1\rangle\ge
          \lambda|x_0-x_1|^2.
          %\quad
          %\forevery x_0,x_1\in \R^d.
        \end{equation}
        \item[$\lambda$-convexity inequality]
          \begin{equation}
            \label{eq:3}
            \varphi(x_\theta)\le
            (1-\theta)\varphi(x_0)+\theta\varphi(x_1){-\frac{\lambda}{2}}\theta(1-\theta)|x_0-x_1|^2\quad
            \text{$x_\theta:=(1-\theta)x_0+\theta x_1$, $\theta\in [0,1].$}
            %\forevery x_0,x_1\in \R^d.
          \end{equation}
          \item[Sub-gradient inequality]
            \begin{equation}
              \label{eq:4}
              \la\nabla\varphi(x_1),x_1-x_0\ra-\frac\lambda2|x_1-x_0|^2\ge\varphi(x_1)-\varphi(x_0)\ge\langle\nabla\varphi(x_0),x_1-x_0\rangle+\frac\lambda
              2|x_1-x_0|^2.
              % \quad
              %\forevery x_0,x_1\in \R^d.
            \end{equation}
    \end{description}
    Notice that
    \begin{equation}
      \label{eq:18}
    \text{$\varphi$ is $\lambda$-convex if and only if
        $\tilde\varphi(x):=\varphi(x)-\frac\lambda2|x|^2$ is convex.}
    \end{equation}
    In particular, there exist constants $a\in \R,$ $\bb\in \R^d$  such that
    \begin{equation}
      \label{eq:14}
      \varphi(x)\ge a+\la \bb,x\ra+\frac\lambda2|x|^2.
    \end{equation}
  \end{subequations}
\end{remark}
\begin{definition}[Gradient flow]
  The gradient flow of $\varphi$ is the family of 
  maps
  \begin{equation*}
    \mtS_t:\R^d\freccia\R^d,\quad t\in[0,+\infty),
  \end{equation*}
  characterized by the following property: for every $u_0\in \R^d$, 
  $\mtS_0(u_0):=u_0$ and the
  curve $u_t:=\mtS_t(u_0)$, $t\in (0,+\infty)$, is the unique $C^1$ 
  solution of the Cauchy problem
  \begin{equation}\label{gf}
    \frac\d\dt u_t=-\nabla\varphi(u_t)\quad
    \text{in }(0,+\infty),\quad
    \lim_{t\downarrow0}u_t=u_0.
  \end{equation}
\end{definition}

\noindent By the standard Cauchy-Lipschitz theory and the
\emph{a priori} estimates we will show in the next {t}heorem, for every
initial datum $u_0\in \R^d$ equation
\eqref{gf} admits a unique global solution so that the family
$\mtS_t$, $t\in [0,+\infty)$,
is a continuous semigroup of Lipschitz maps,  thus
satisfying
\begin{equation}
  \label{eq:5}
  \mtS_{t+h}(u_0)=\mtS_t(\mtS_h(u_0)),\quad
  \lim_{t\downarrow0}\mtS_t(u_0)=\mtS_0(u_0)=u_0\quad\forevery u_0\in \R^d.
\end{equation}
\subsection{Basic estimates}
\begin{theorem}[Basic differential estimates]
  Let us assume that $\varphi\in C^2(\Rd)$ is $\lambda$-convex;
  if $u:[0,+\infty)\to\R^d$ is a solution of \eqref{gf} then
  \begin{align}
    \label{eq:6}
    \frac\d\dt \frac 12\EdSquare{u_t}v+
    \frac
    \lambda 2\EdSquare{u_t}v=
    \rme^{-\lambda t}\frac\d\dt\Big(\rme^{\lambda t}\frac
    12\EdSquare{u_t}v\Big)&\le \varphi(v)-\varphi(u_t)\quad\forevery
    v\in \R^d,
    \tag{EVI$_\lambda$}
    \\\label{eq:7}
    \frac\d\dt \varphi(u_t)=-|u_t'|^2=-|\nabla\varphi(u_t)|^2&\le
    0,\tag{EI}
    \\\label{eq:8}
    \frac\d\dt \Big(\rme^{{2}\lambda t}\,|\nabla\varphi(u_t)|{^2}\Big)=
    \frac\d\dt \Big(\rme^{{2}\lambda t}\,|u_t'|{^2}\Big)&\le 0;\tag{SI$_\lambda$}
    \intertext{moreover, if $v$ is another solution to \eqref{gf}
      then}
    \frac\d\dt \Big(\rme^{\lambda t}\,\Ed{u_t}{v_t}\Big)&\le 0.
    \tag{Cont$_\lambda$}
    \label{eq:9}
  \end{align}
\end{theorem}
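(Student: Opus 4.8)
The plan is to derive all four estimates by one common two-step mechanism: differentiate the relevant scalar quantity along the curve $u_t$, substitute the equation $u_t'=-\nabla\varphi(u_t)$ from \eqref{gf}, and then invoke exactly one of the four equivalent characterizations of $\lambda$-convexity collected in the opening Remark. The exponential weights appearing in \eqref{eq:6}, \eqref{eq:8} and \eqref{eq:9} are produced mechanically by the identity $e^{-ct}\frac{\d}{\dt}\big(e^{ct}f(t)\big)=f'(t)+c\,f(t)$, so in each case it suffices to bound the unweighted combination $f'+cf$ from above by the claimed right-hand side.

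For \eqref{eq:6} I would fix $v\in\R^d$, set $f(t)=\tfrac{1}{2}|u_t-v|^2$, and compute by the chain rule and \eqref{gf} that $f'(t)=\langle u_t',u_t-v\rangle=\langle\nabla\varphi(u_t),v-u_t\rangle$. The sub-gradient inequality \eqref{eq:4}, applied with base point $u_t$ and comparison point $v$, gives $\langle\nabla\varphi(u_t),v-u_t\rangle\le\varphi(v)-\varphi(u_t)-\tfrac{\lambda}{2}|u_t-v|^2$, which is exactly $f'(t)+\tfrac{\lambda}{2}|u_t-v|^2\le\varphi(v)-\varphi(u_t)$, i.e. \eqref{eq:6} after applying the product-rule identity with $c=\lambda$. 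The contraction estimate \eqref{eq:9} follows the same pattern with two solutions: differentiating $\tfrac{1}{2}|u_t-v_t|^2$ yields $-\langle\nabla\varphi(u_t)-\nabla\varphi(v_t),u_t-v_t\rangle$, and the $\lambda$-monotonicity \eqref{eq:2} bounds this by $-\lambda|u_t-v_t|^2$.

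The energy identity \eqref{eq:7} is the most direct: the chain rule and \eqref{gf} alone give $\frac{\d}{\dt}\varphi(u_t)=\langle\nabla\varphi(u_t),u_t'\rangle=-|\nabla\varphi(u_t)|^2=-|u_t'|^2\le 0$. For the slope estimate \eqref{eq:8} I would first note that, since $\varphi\in C^2$, the right-hand side $-\nabla\varphi$ of \eqref{gf} is $C^1$, so $u$ is $C^2$ in $t$ and $u_t''=-\rmD^2\varphi(u_t)\,u_t'$. Then $\frac{\d}{\dt}|u_t'|^2=2\langle u_t',u_t''\rangle=-2\langle\rmD^2\varphi(u_t)\,u_t',u_t'\rangle\le-2\lambda|u_t'|^2$ by the Hessian inequality \eqref{eq:1}, and \eqref{eq:8} follows from the product-rule identity with $c=2\lambda$.

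The only genuinely delicate point, which I expect to be the main obstacle, is the passage in \eqref{eq:9} from the squared quantity to the distance itself: the statement concerns $\frac{\d}{\dt}\big(e^{\lambda t}|u_t-v_t|\big)$, and $t\mapsto|u_t-v_t|$ need not be differentiable where $u_t=v_t$. I would dispose of this by a dichotomy based on uniqueness for \eqref{gf} (guaranteed by Cauchy--Lipschitz, since $\nabla\varphi$ is locally Lipschitz): if $u_{t_0}=v_{t_0}$ for some $t_0$ then $u\equiv v$ and the inequality is trivial; otherwise $h(t):=|u_t-v_t|>0$ for all $t$, so $h$ is smooth and positive, $\frac{\d}{\dt}\tfrac{1}{2}h^2=h\,h'$, and the bound $\frac{\d}{\dt}\tfrac{1}{2}h^2\le-\lambda h^2$ obtained above yields $h'\le-\lambda h$, that is $\frac{\d}{\dt}\big(e^{\lambda t}h\big)\le 0$, as claimed.
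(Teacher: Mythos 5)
Your proposal is correct and follows essentially the same route as the paper's proof: differentiate the relevant quantity, substitute \eqref{gf}, and invoke the sub-gradient inequality \eqref{eq:4} for \eqref{eq:6}, the Hessian bound \eqref{eq:1} for \eqref{eq:8} (your computation via $u_t''$ is identical to the paper's via $\frac{\d}{\dt}\nabla\varphi(u_t)$, since $|u_t'|=|\nabla\varphi(u_t)|$ along the flow), and the $\lambda$-monotonicity \eqref{eq:2} for \eqref{eq:9}. In fact your treatment of \eqref{eq:9} is more careful than the paper's sketch, which only establishes the differential inequality for the \emph{squared} distance and leaves implicit the passage to $\frac{\d}{\dt}\big(\rme^{\lambda t}\Ed{u_t}{v_t}\big)\le 0$; your dichotomy via forward--backward uniqueness of \eqref{gf} settles that point cleanly.
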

\begin{proof}
  We sketch here the easy calculations.\\  
  For the \emph{evolution variational inequality} \eqref{eq:6}:
  \begin{displaymath}
    \frac\d\dt\frac12\EdSquare{u_t}v=\la u_t',u_t-v\ra\topref{gf}=\la
    \nabla\varphi(u_t),v-u_t\ra
    \topref{eq:4}\le \varphi(v)-\varphi(u_t)-\frac\lambda 2\EdSquare{u_t}v.
  \end{displaymath}
  The \emph{energy identity} \eqref{eq:7}:
  \begin{displaymath}
    \frac\d\dt\varphi(u_t)=\la\nabla\varphi(u_t),u_t'\ra\topref{gf}=-|\nabla\varphi(u_t)|^2\topref{gf}=-|u_t'|^2.    
  \end{displaymath}
  The \emph{slope inequality} \eqref{eq:8}:
  \begin{displaymath}
    \frac\d\dt |\nabla\varphi(u_t)|^2=
    2\la \rmD^2\varphi(u_t)\nabla\varphi(u_t),u_t'\ra\topref{gf}=
    -2\la \rmD^2\varphi(u_t)\nabla\varphi(u_t),\nabla\varphi(u_t)\ra
    \topref{eq:1}\le -2\lambda |\nabla\varphi(u_t)|^2.
  \end{displaymath}
  The \emph{$\lambda$-contraction} property \eqref{eq:9}:
  \begin{displaymath}
    \frac\d\dt \EdSquare{u_t}{v_t}=2\la
    u_t'-v_t',u_t-v_t\ra\topref{gf}=
    -2\la \nabla\varphi(u_t)-\nabla\varphi(v_t),u_t-v_t\ra
    \topref{eq:2}\le -2\lambda \EdSquare{u_t}{v_t}.\qedhere
  \end{displaymath}
\end{proof}

In order to write in a simple way suitable integrated versions of the
previous inequalities, we set
\begin{equation}
  \label{eq:cap2:11}
  \sfE\lambda t:=\int_0^t \rme^{\lambda r}\,\d r=
  \begin{cases}
    \frac{\rme^{\lambda t}-1}\lambda&\text{if }\lambda\neq0,\\
    t&\text{if }\lambda=0.
  \end{cases}
\end{equation}
\begin{corollary}[Pointwise and integral inequalities]
  If $u:[0,+\infty)\to \R^d$ is a solution to \eqref{gf} then
  \begin{align}
    \label{eq:10}
    \frac{\rme^{\lambda t}}2\EdSquare{u_t}v+\sfE\lambda t\big(\varphi(u_t)-\varphi(v)\big)+\frac{\big(\sfE\lambda t\big)^2}2|\nabla\varphi(u_t)|^2
    &\le \frac 12 \EdSquare{u_0}v
    \quad\forevery v\in \R^d,\\
    \label{eq:11}
    \varphi(u_t)+\frac 12\int_0^t
    \Big(|u_r'|^2+|\nabla\varphi(u_r)|^2\Big)\,\d r&=\varphi(u_0),\\
    \label{eq:12}
    |\nabla\varphi(u_t)|&\le \rme^{-\lambda t}|\nabla\varphi(u_0)|;\\
   \intertext{moreover, if $v$ is another solution to \eqref{gf} then}
   \label{eq:13}
   \Ed{u_t}{v_t}&\le \rme^{-\lambda t}\Ed{u_0}{v_0}.
  \end{align}
  In particular, when $\lambda>0$, $\varphi$ admits a unique minimum
  point $\umin $ and
  \begin{gather}
    \label{eq:15}
    \frac\lambda 2\EdSquare{u_t}{\umin }\le
    \varphi(u_t)-\varphi(\umin )\le \frac
    1{2\lambda}|\nabla\varphi(u_t)|^2\\
    \label{eq:16}
    \Ed{u_t}{\umin }\le \rme^{-\lambda t}\Ed{u_0}{\umin},\quad
    \varphi(u_t)-\varphi(\umin )\le \rme^{-2\lambda
      t}{\big(\varphi(u_0)-\varphi(\umin )\bigr)}.
  \end{gather}
\end{corollary}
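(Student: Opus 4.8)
The plan is to obtain all four displayed estimates by integrating in time the pointwise differential inequalities \eqref{eq:6}--\eqref{eq:9} from the previous theorem, all of which are legitimate because $\varphi\in C^2$ and $u$ is $C^1$, and then to specialize to the case $\lambda>0$.

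First I would dispose of the three easy estimates. The energy identity \eqref{eq:11} follows by integrating \eqref{eq:7} over $[0,t]$ and using that $|u_r'|^2=|\nabla\varphi(u_r)|^2$ along the flow, so the two integrands coincide and the statement is just the symmetrized form. The slope bound \eqref{eq:12} follows from \eqref{eq:8}: the quantity $\rme^{2\lambda r}|\nabla\varphi(u_r)|^2$ is nonincreasing, so comparing $r=t$ with $r=0$ and taking square roots gives the claim. Likewise \eqref{eq:13} is the integrated form of \eqref{eq:9}, obtained by noting that $\rme^{\lambda r}\Ed{u_r}{v_r}$ is nonincreasing and evaluating at the endpoints.

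The heart of the matter is the combined inequality \eqref{eq:10}. Here I would start from \eqref{eq:6} written as $\ddt\big(\rme^{\lambda r}\tfrac12\EdSquare{u_r}v\big)\le \rme^{\lambda r}(\varphi(v)-\varphi(u_r))$ and integrate over $[0,t]$, obtaining
\[
\frac{\rme^{\lambda t}}2\EdSquare{u_t}v-\frac12\EdSquare{u_0}v\le \sfE\lambda t\,\varphi(v)-\int_0^t \rme^{\lambda r}\varphi(u_r)\,\d r.
\]
It then remains to bound the last integral from below by $\sfE\lambda t\,\varphi(u_t)+\tfrac12(\sfE\lambda t)^2|\nabla\varphi(u_t)|^2$, which rearranges exactly into \eqref{eq:10}. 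This is the step I expect to be the main obstacle, and it is where \eqref{eq:7} and \eqref{eq:8} must be combined. The key lemma is the lower bound
\[
\varphi(u_r)-\varphi(u_t)=\int_r^t|\nabla\varphi(u_s)|^2\,\d s\ge |\nabla\varphi(u_t)|^2\int_r^t \rme^{2\lambda(t-s)}\,\d s\qquad(0\le r\le t),
\]
whose equality is \eqref{eq:7} and whose inequality uses the monotonicity behind \eqref{eq:8} in the form $|\nabla\varphi(u_s)|^2\ge \rme^{2\lambda(t-s)}|\nabla\varphi(u_t)|^2$ for $s\le t$. Multiplying by $\rme^{\lambda r}$ and integrating in $r$, a Fubini exchange reduces the double integral $\int_0^t \rme^{\lambda r}\!\int_r^t \rme^{2\lambda(t-s)}\,\d s\,\d r$ to $\tfrac12(\sfE\lambda t)^2$ after elementary manipulations based on the definition \eqref{eq:cap2:11}; this produces precisely the two missing terms.

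Finally, for $\lambda>0$ the coercivity in \eqref{eq:14} forces $\varphi$ to attain a unique minimum $\umin$, at which $\nabla\varphi(\umin)=0$. The two-sided bound \eqref{eq:15} follows from the sub-gradient inequality \eqref{eq:4}: choosing $x_0=\umin$, $x_1=u_t$ gives the left inequality directly since the linear term vanishes, while the right inequality comes from the upper sub-gradient bound combined with Young's inequality $\la\nabla\varphi(u_t),u_t-\umin\ra\le \tfrac1{2\lambda}|\nabla\varphi(u_t)|^2+\tfrac\lambda2\EdSquare{u_t}{\umin}$. For \eqref{eq:16}, the first estimate is \eqref{eq:13} applied to the stationary solution $v_t\equiv\umin$; the second follows from a Gronwall argument, since \eqref{eq:7} together with the right half of \eqref{eq:15} yields $\ddt(\varphi(u_t)-\varphi(\umin))=-|\nabla\varphi(u_t)|^2\le -2\lambda(\varphi(u_t)-\varphi(\umin))$.
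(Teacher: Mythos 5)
Your proof is correct, but for the key estimate \eqref{eq:10} it follows a genuinely different route from the paper's. The paper's proof is a pure monotonicity argument: it denotes by $A_t$ the entire left-hand side of \eqref{eq:10}, differentiates it in time, and shows $\frac{\d}{\dt}A_t\le 0$ by inserting \eqref{eq:6}, \eqref{eq:7} and \eqref{eq:8} pointwise, so that $A_t\le A_0=\frac12\EdSquare{u_0}v$; this is shorter, needs no integral computation, and yields the slightly stronger fact that the left-hand side is nonincreasing between \emph{any} two times. You instead integrate \eqref{eq:6} first and then prove the separate lower bound $\int_0^t \rme^{\lambda r}\varphi(u_r)\,\d r\ge \sfE\lambda t\,\varphi(u_t)+\frac12\big(\sfE\lambda t\big)^2|\nabla\varphi(u_t)|^2$, obtained from the energy identity \eqref{eq:7} combined with \eqref{eq:8} in the form $|\nabla\varphi(u_s)|^2\ge\rme^{2\lambda(t-s)}|\nabla\varphi(u_t)|^2$ for $s\le t$; your double integral $\int_0^t\rme^{\lambda r}\int_r^t\rme^{2\lambda(t-s)}\,\d s\,\d r$ does evaluate to $\frac12\big(\sfE\lambda t\big)^2$ (it equals $(\rme^{\lambda t}-1)^2/2\lambda^2$ for $\lambda\neq 0$ and $t^2/2$ for $\lambda=0$), so the estimate closes. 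What your route buys is transparency: each term of \eqref{eq:10} is traced to a specific ingredient, no Lyapunov quantity has to be guessed in advance, and the argument is closer in spirit to the derivative-free integrated formulations (such as \eqref{eq:cap1:78bis}) used later in the metric theory; what the paper's route buys is brevity and the interpolation between arbitrary times. The remaining items, which the paper leaves implicit, you handle correctly: \eqref{eq:11}, \eqref{eq:12}, \eqref{eq:13} are indeed the integrated forms of \eqref{eq:7}, \eqref{eq:8}, \eqref{eq:9}, and in the case $\lambda>0$ the existence and uniqueness of $\umin$ follow from the coercivity \eqref{eq:14} together with strict convexity, \eqref{eq:15} from the sub-gradient inequality \eqref{eq:4} plus Young's inequality, the first half of \eqref{eq:16} from \eqref{eq:13} applied with the stationary solution $v_t\equiv\umin$, and the second half by Gronwall from \eqref{eq:7} and the right half of \eqref{eq:15}.
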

\begin{proof}
  We have just to check \eqref{eq:10}: if $A_t$ denotes the quantity
  in the left-hand side, we show that $A_t$ is nonincreasing. A
  differentiation in time yields
  \begin{align*}
    \frac\d\dt A_t&=
    \rme^{\lambda t}\Big(\frac \lambda2
    \EdSquare{u_t}v+\frac\d\dt\frac12\EdSquare{u_t}v+\varphi(u_t)-\varphi(v)+\sfE\lambda t|\nabla\varphi(u_t)|^2\Big)\\&\qquad
    +\sfE\lambda t\frac\d\dt\varphi(u_t)+\frac{\big(\sfE\lambda t\big)^2}2\frac\d\dt|\nabla\varphi(u_t)|^2\Big)\\&
    \topref{eq:6}\le
    \sfE\lambda t\Big(\rme^{\lambda
      t}|\nabla\varphi(u_t)|^2+\frac\d\dt\varphi(u_t)+\frac{\sfE\lambda t}2\frac\d\dt|\nabla\varphi(u_t)|^2\Big)
    \\&\topref{eq:7}=
    \sfE\lambda t\Big(
    \big(\rme^{\lambda
      t}-1\big)|\nabla\varphi(u_t)|^2+\frac{\sfE\lambda t}2\frac\d\dt|\nabla\varphi(u_t)|^2\Big)
    \\&\topref{eq:8}\le
    \sfE\lambda t\Big(
    \big(\rme^{\lambda
      t}-1-\lambda \sfE\lambda t\big)|\nabla\varphi(u_t)|^2\Big)
    \topref{eq:cap2:11}=0.\qedhere
  \end{align*}
\end{proof}
In terms of the maps $\mtS_t${,} \eqref{eq:13} yields the
$\lambda$-contraction estimate
\begin{equation}\label{contr}
\sfd(\mtS_t(u_0),\mtS_t(v_0))\leq \rme^{-\lambda
  t}\sfd(u_0,v_0)\quad\forevery u_0,v_0\in\R^d,\ t\geq0,
\end{equation}
thus showing the Lipschitz property of $\mtS_t$ and 
the uniqueness and continuous dependence w.r.t.\ the
initial data of the solutions of \eqref{gf}.

\subsection{Approximation by the Implicit Euler scheme}
One of the simplest but very useful way{s} to construct discrete
approximations of the solution to
\eqref{gf} (and to show its existence by a limiting process) is given
by the \emph{implicit Euler scheme.}

% This method is the so-called ``minimizing movements'' scheme, which was introduced by De Giorgi and his collaborators in \cite{}, and it consists in a generalization of the Implicit Euler Scheme for the numerical solution of \eqref{gf} in $X=\R^d$.

For a given time step $\ttau>0$ we consider the associated uniform
partition {of} $[0,+\infty)$
\begin{equation}
  \label{eq:19}
\mathcal{P}_{\ttau}:=\{0=t^0_{\ttau}<t^1_{\ttau}<...<t^n_{\ttau}<...\}, \quad t^n_{\ttau}:=n\tau,
\end{equation}
and we look for a discrete sequence $(U^n_\tau)_{n\in \N}$
whose value $U^n_\tau$ should provide an effective approximation of
$u(t^n_\tau)$. $U^n_\tau$ are defined recursively, starting from a
suitable choice of $U^0_\tau\approx u_0$, by solving at each step the
equation of the Euler scheme
\begin{align}\label{eus}
\frac{U^n_{\ttau}-U^{n-1}_{\ttau}}{\ttau}=-\nabla{\varphi}(U^n_{\ttau})\quad n=1,2,\cdots,
\end{align}
or, equivalently,
\begin{equation}
  \label{eq:20}
  U^n_\tau=J_\tau(U^{n-1}_\tau),\quad J_\tau:= (I+\tau\nabla\varphi)^{-1}.
\end{equation}
Existence of a discrete approximating solution can be easily obtained
by looking for the minimizers of the function
\begin{equation}
  \label{eq:21}
  U\mapsto \Phi(\tau,U^{n-1}_\tau;U):=\frac
  1{2\tau}\big|U-U^{n-1}_\tau\big|^2+\varphi(U).
\end{equation}
In fact, it is immediate to check that any minimizer $U^n_\tau$ of \eqref{eq:21}
solves \eqref{eus}; moreover, the function defined by \eqref{eq:21} is
$(\tau^{-1}+\lambda)$-convex and therefore it admits a unique
minimizer whenever $\tau^{-1}>-\lambda$.

Denoting by $U_\tau:[0,+\infty)\to\R^d$ the piecewise linear
interpolant of the discrete values $(U^n_\tau)_{n\in \N}$ on the grid
$\cP_\tau$, defined by
\begin{equation}
  \label{eq:22}
  U_\tau(t):=\frac{t-t^{n-1}_\tau}\tau
  U^{n-1}_\tau+\frac{t^n_\tau-t}\tau U^n_\tau\quad\text{if }t\in [t^{n-1}_\tau,t^n_\tau],
\end{equation}
one expects that $U_\tau(t)$ converges to the solution $u_t$ to
\eqref{gf} as $\tau\down0$.
\begin{theorem}
  If $\lim_{\tau\downarrow0}U^0_\tau=u_0$ then the family of
  piecewise linear interpolants $(U_\tau)_{\tau>0}$ satisfies the Cauchy
  condition as $\tau\down0$ with respect to the uniform convergence on each compact
  interval $[0,T]$, $T>0$; its unique limit is the solution
  $u_t$ of \eqref{gf}.
  Moreover, for every $T>0$ there exists a universal constant $C(\lambda,T)$ such that
  \begin{equation}
    \label{eq:23}
    \sup_{t\in [0,T]} \Ed{u_t}{U_\tau(t)}\le \Ed{u_0}{U^0_\tau}+ C(\lambda,T) |\nabla\varphi(u_0)|\,\tau.
  \end{equation}
  In particular, when $\lambda=0$ we can choose $C=\frac 1{\sqrt 2}$,
  independent of $T$.
\end{theorem}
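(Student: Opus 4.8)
The exact solution $u$ of \eqref{gf} is already available (Cauchy--Lipschitz applies since $\nabla\varphi\in C^1$ is locally Lipschitz, and the \emph{a priori} bounds of the previous theorem make it global), so the statement reduces to the quantitative estimate \eqref{eq:23}: once $\sup_{t\in[0,T]}\Ed{u_t}{U_\tau(t)}\to0$ as $\tau\downarrow0$ whenever $U^0_\tau\to u_0$, the Cauchy property of $(U_\tau)$ and the identification of its limit with $u$ follow at once. The plan is therefore to compare the piecewise linear interpolant $U_\tau$ directly with $u$, using the variational character of the scheme rather than any pointwise Taylor expansion of $\nabla\varphi$ --- this is what will keep the final bound free of the Hessian and expressed only through $|\nabla\varphi(u_0)|$.

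The first ingredient is a \emph{discrete} evolution variational inequality. Since the map in \eqref{eq:21} is $(\tau^{-1}+\lambda)$-convex and $U^n_\tau$ is its minimiser, comparing its value at $U^n_\tau$ with that at an arbitrary $v$ gives
\[ \tfrac1{2\tau}\EdSquare{v}{U^n_\tau}-\tfrac1{2\tau}\EdSquare{v}{U^{n-1}_\tau}+\tfrac\lambda2\EdSquare{v}{U^n_\tau}+\tfrac1{2\tau}\EdSquare{U^n_\tau}{U^{n-1}_\tau}\le\varphi(v)-\varphi(U^n_\tau), \]
the exact discrete analogue of \eqref{eq:6}. Taking $v=U^{n-1}_\tau$ yields the discrete energy dissipation $\varphi(U^{n-1}_\tau)-\varphi(U^n_\tau)\ge\tfrac1{2\tau}\EdSquare{U^n_\tau}{U^{n-1}_\tau}$, and a discrete counterpart of \eqref{eq:8}--\eqref{eq:12} shows that the discrete slopes $|\nabla\varphi(U^n_\tau)|=\Ed{U^n_\tau}{U^{n-1}_\tau}/\tau$ are nonincreasing and controlled by $\rme^{-\lambda t^n_\tau}|\nabla\varphi(U^0_\tau)|$; I will need these bounded uniformly on $[0,T]$.

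Next I would run a per-step comparison. Testing the continuous inequality \eqref{eq:6} with $v=U^n_\tau$ and integrating over $[t^{n-1}_\tau,t^n_\tau]$, then adding the discrete inequality with $v=u_{t^{n-1}_\tau}$ multiplied by $\tau$, the mixed terms telescope and one is left (stated here for $\lambda=0$; for general $\lambda$ one inserts the weights $\rme^{\lambda t}$ exactly as in the proof of \eqref{eq:10}) with
\[ \tfrac12\EdSquare{u_{t^n_\tau}}{U^n_\tau}\le\tfrac12\EdSquare{u_{t^{n-1}_\tau}}{U^{n-1}_\tau}-\tfrac12\EdSquare{U^n_\tau}{U^{n-1}_\tau}+\int_{t^{n-1}_\tau}^{t^n_\tau}\big(\varphi(u_{t^{n-1}_\tau})-\varphi(u_s)\big)\,\d s. \]
The last term is the consistency error; by the energy identity \eqref{eq:7} it equals $\int_{t^{n-1}_\tau}^{t^n_\tau}(t^n_\tau-r)|\nabla\varphi(u_r)|^2\,\d r$, which is genuinely $O(\tau^2)$ per step and, through \eqref{eq:12}, is governed by $|\nabla\varphi(u_0)|$. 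Summing over $n$, the initial distance $\EdSquare{u_0}{U^0_\tau}$ survives with coefficient $1$ (coefficient $\rme^{-\lambda t}$, i.e. the contraction \eqref{eq:9}, in the weighted version), and it remains to control the accumulated consistency error minus the accumulated discrete dissipation $\sum_n\tfrac12\EdSquare{U^n_\tau}{U^{n-1}_\tau}$. Finally I would pass from the nodes $t^n_\tau$ to arbitrary $t\in[0,T]$ using the uniform Lipschitz bounds on $u$ and on $U_\tau$ (both with velocity $\le|\nabla\varphi(u_0)|$), which preserves the first-order rate.

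The main obstacle is precisely this last cancellation. Estimating the consistency term crudely and discarding the discrete dissipation only gives $\sum_nO(\tau^2)=O(\tau T)$ in the \emph{squared} distance, hence the Crandall--Liggett-type half-order bound $O(\sqrt{\tau T})$ --- too weak for \eqref{eq:23}. To reach the first-order rate with the sharp, $T$-independent constant $\tfrac1{\sqrt2}$ at $\lambda=0$, one must exploit that the leading part $\tfrac{\tau^2}2|\nabla\varphi(u_r)|^2$ of the continuous consistency error is cancelled by the discrete dissipation $\tfrac12\EdSquare{U^n_\tau}{U^{n-1}_\tau}=\tfrac{\tau^2}2|\nabla\varphi(U^n_\tau)|^2$: since both the continuous slope (by \eqref{eq:8}) and the discrete slope are nonincreasing and start from comparable values, the difference of the two sums telescopes to a single boundary term of size $|\nabla\varphi(u_0)|^2$, giving $\EdSquare{u_{t^n_\tau}}{U^n_\tau}\le\EdSquare{u_0}{U^0_\tau}+O\big(\tau^2|\nabla\varphi(u_0)|^2\big)$. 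Making this slope comparison rigorous --- essentially a monotone comparison between the discrete and continuous velocities, for which the $C^2$ regularity of $\varphi$ is used --- is the delicate heart of the argument; the contraction \eqref{eq:9} and the energy monotonicity \eqref{eq:7} then do the rest.
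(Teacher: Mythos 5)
Your route is genuinely different from the paper's, and it is worth recording this before naming the gap. The paper proves the Cauchy/convergence statement by compactness: the resolvent contraction \eqref{eq:24} gives the uniform Lipschitz bounds \eqref{eq:25}--\eqref{eq:26}, Ascoli--Arzel\`a yields a locally uniformly convergent subsequence, one passes to the limit in the integrated form of \eqref{eq:29}, and uniqueness for \eqref{gf} upgrades this to convergence of the whole family; for the error estimate \eqref{eq:23} the paper gives no proof at all, deferring to \cite{Nochetto-Savare-Verdi00,Rulla96,Savare96}. You instead try to prove \eqref{eq:23} directly and deduce the Cauchy property from it. The reduction is legitimate, and your building blocks are sound: the discrete evolution variational inequality coming from the $(\tau^{-1}+\lambda)$-convexity of \eqref{eq:21} is exactly right, the per-step inequality obtained by integrating \eqref{eq:6} at $v=U^n_\tau$ and adding the discrete inequality at $v=u_{t^{n-1}_\tau}$ is correct (the mixed terms do cancel), the representation of the consistency term via \eqref{eq:7} is correct, and you rightly observe that discarding the discrete dissipation only yields the half-order rate.

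The gap is the step you yourself call the ``delicate heart'', and the mechanism you propose for it fails as stated. The terms $E_n:=\int_{t^{n-1}_\tau}^{t^n_\tau}(t^n_\tau-r)|\nabla\varphi(u_r)|^2\,\d r$ and $D_n:=\frac{\tau^2}2|\nabla\varphi(U^n_\tau)|^2$ are attached to two \emph{different} evolutions, so $\sum_n(E_n-D_n)$ is not a telescoping sum, and monotonicity of each slope sequence together with comparable starting values controls nothing: two nonincreasing sequences with the same first term can have sums differing by $N$ full-size terms, i.e.\ by $O(T\tau|\nabla\varphi(u_0)|^2)$, which is exactly the half-order barrier you are trying to pass. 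Moreover, the termwise comparison that your bound $E_n\le\frac{\tau^2}2|\nabla\varphi(u_{t^{n-1}_\tau})|^2$ literally requires, namely $|\nabla\varphi(u_{t^{n-1}_\tau})|\le|\nabla\varphi(U^n_\tau)|$, is false: for $\varphi(x)=\frac a2|x|^2$ with $U^0_\tau=u_0$ one has $|\nabla\varphi(U^n_\tau)|=a(1+a\tau)^{-n}|u_0|$ and $|\nabla\varphi(u_{t^{n-1}_\tau})|=a\rme^{-a(n-1)\tau}|u_0|$, and $(1+a\tau)^{-n}<\rme^{-a(n-1)\tau}$ for all $n\lesssim 2/(a\tau)$, i.e.\ on a whole initial time interval of length $\approx 2/a$. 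The same-node comparison $|\nabla\varphi(u_{t^n_\tau})|\le|\nabla\varphi(U^n_\tau)|$, which would indeed collapse the difference of the sums to the boundary term $\frac{\tau^2}2|\nabla\varphi(u_0)|^2$, is a genuine comparison lemma between the two evolutions (true in dimension one by order preservation of flow and resolvent, and formally true to leading order for one step), but you give no argument for it in $\R^d$ and it does not follow from monotonicity. A way to close the argument without any such lemma is to let each sum telescope against its \emph{own} energy: since $r\mapsto|\nabla\varphi(u_r)|^2$ is nonincreasing one has $E_n\le\frac\tau2\big(\varphi(u_{t^{n-1}_\tau})-\varphi(u_{t^n_\tau})\big)+\frac{\tau^2}8\big(|\nabla\varphi(u_{t^{n-1}_\tau})|^2-|\nabla\varphi(u_{t^n_\tau})|^2\big)$, while \eqref{eq:4} gives $\varphi(U^{n-1}_\tau)-\varphi(U^n_\tau)\le\frac\tau2\big(|\nabla\varphi(U^{n-1}_\tau)|^2+|\nabla\varphi(U^n_\tau)|^2\big)$ and hence a matching lower bound for $\sum_n D_n$; after summation one is left with the single cross term $\frac\tau2\big(\varphi(U^N_\tau)-\varphi(u_{t^N_\tau})\big)\le\frac\tau2|\nabla\varphi(U^N_\tau)|\,|U^N_\tau-u_{t^N_\tau}|$, so the inequality becomes a quadratic inequality in the nodal error and yields the first-order, $T$-independent bound (with a constant worse than $\frac1{\sqrt2}$). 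This missing cancellation argument --- or the a posteriori machinery of the references the paper cites --- is what separates your sketch from a proof; note also that it should not rely on quantitative $C^2$ information, since the declared interest of \eqref{eq:23} is precisely its independence of any upper bound on $\rmD^2\varphi$.
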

\begin{proof}[Remarks about the proof]
  In the present finite dimensional smooth setting, the proof of {the}
  convergence of $U_\tau$ is not difficult: considering e.g.\ the case
  $\lambda=0$, we can apply the contraction property of the map $J_\tau$
  defined by \eqref{eq:20}
  \begin{equation}
    \label{eq:24}
    \Ed{J_\tau(x)}{J_\tau(y)}\le \Ed xy\quad\forevery x,y\in \R^d,
  \end{equation}
  to obtain the uniform bound
  \begin{equation}
    \label{eq:25}
    \tau^{-1}\Ed{U^n_\tau}{U^{n-1}_\tau}=|\nabla\varphi(U^n_\tau)|\le
    |\nabla\varphi(U^{n-1}_\tau)|\quad\forevery n\ge 1,
  \end{equation}
  so that
  \begin{equation}
    \label{eq:26}
    |U'_\tau(t)|\le \sup_{n\in \N}\tau^{-1}
    \Ed{U^n_\tau}{U^{n-1}_\tau}=
    \tau^{-1}\Ed{U^1_\tau}{U^0_\tau}\le
    |\nabla\varphi(U^0_\tau)|\quad\text{for every }t\in
    [0,+\infty)\setminus \cP_\tau.
  \end{equation}
  Since
  $\lim_{\tau\down0}|\nabla\varphi(U^0_\tau)|=|\nabla\varphi(u_0)|$ it
  follows that
  $(U_\tau)_{\tau>0}$ satisfies a uniform Lipschitz condition and therefore it admits a
  suitable subsequence uniformly converging to a Lipschitz curve $u$
  in each compact interval $[0,T]$. Denoting by $\bar U_\tau(t)$ the
  piecewise constant interpolant
  \begin{equation}
    \label{eq:27}
    \bar U_\tau(t):=U^n_\tau\quad\text{if }t\in (t^{n-1}_\tau,t^n],
  \end{equation}
  the same estimate \eqref{eq:26} shows that
  \begin{equation}
    \label{eq:28}
    \sup_{t\in (0,+\infty)}\Ed{U_\tau(t)}{\bar U_\tau(t)}\le \tau |\nabla\varphi(U^0_\tau)|,
  \end{equation}
  so that $\bar U_\tau$ has the same limit points than $U_\tau$. On
  the other hand, \eqref{eus} yields
  \begin{equation}
    \label{eq:29}
    U'_\tau(t)=-\nabla\varphi(\bar U_\tau(t))\quad\text{in
    }[0,+\infty)\setminus \cP_\tau,
  \end{equation}
  and we can pass to the limit in an integrated form of \eqref{eq:29}
  thus showing that $u$ solves \eqref{gf}.

  The uniform error estimate \eqref{eq:23} is subtler: a simple
  derivation in the case $\lambda=0$ can be found in
  \cite{Nochetto-Savare-Verdi00}, see also \cite{Rulla96,Savare96}. Its main functional interest relies
  on the fact that it involves just the lower bound on the Hessian of
  $\varphi$ but not its upper bound (and therefore, it does not require a uniform
  Lipschitz condition on $\nabla\varphi$).
\end{proof}

\subsection{Metric characterization of Gradient flows in $\R^d$}
The energy  identity \eqref{eq:7} (with his integrated version
\eqref{eq:11}) and the evolution variational inequality \eqref{eq:6}
not only provide important estimates on the solution to \eqref{gf} but
can also be used to characterize it.

Concerning \eqref{eq:7} we can even relax the identity, as the
following proposition shows.
\begin{proposition}[Curves of maximal slope]
  \label{gfprop} A $C^1$ curve
  $u:[0,+\infty)\freccia\R^d$
  is a solution to \eqref{gf} if and only if it satisfies the
 \emph{Energy Dissipation Inequality}
\begin{equation}\label{ei}
  \frac{\d}{\dt}\varphi(u_t)\le
  -\frac{1}{2}|u_t'|^2-\frac{1}{2}|\nabla\varphi(u_t)|^2\quad\text{in }(0,+\infty)
  \tag{EDI}
\end{equation}
or its weaker integrated form
\begin{equation}
  \label{eq:17}
  \varphi(u_t)+\frac12\int_0^t\Big(|u_r'|^2+|\nabla\varphi(u_r)|^2\Big)\,\d
  r\le \varphi(u_0)\quad\forevery t\in (0,+\infty).
    \tag{EDI'}
\end{equation}
\end{proposition}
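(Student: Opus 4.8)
The plan is to close the cycle of implications \eqref{gf} $\Rightarrow$ \eqref{ei} $\Rightarrow$ \eqref{eq:17} $\Rightarrow$ \eqref{gf}, so that all three conditions become equivalent. The first arrow is immediate from the energy identity \eqref{eq:7}: since $\frac{\d}{\dt}\varphi(u_t)=-|u_t'|^2=-|\nabla\varphi(u_t)|^2$, averaging the two right-hand sides shows that \eqref{ei} holds as an equality. The second arrow is merely integration on $[0,t]$, which yields \eqref{eq:17} (this is precisely \eqref{eq:11} read as an inequality). Only the last arrow carries real content, and it is the step I would develop carefully.

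For the converse I would exploit a completion-of-the-square identity. For any $C^1$ curve the chain rule gives $\frac{\d}{\dt}\varphi(u_t)=\langle\nabla\varphi(u_t),u_t'\rangle$, and Young's inequality produces the pointwise lower bound
\begin{equation*}
  \frac{\d}{\dt}\varphi(u_t)\ge -\frac12|u_t'|^2-\frac12|\nabla\varphi(u_t)|^2,
\end{equation*}
which is exactly the reverse of \eqref{ei}. Integrating this bound on $[0,t]$ gives the reverse of \eqref{eq:17}. Hence, if $u$ satisfies \eqref{eq:17}, both inequalities hold simultaneously and the integrated energy dissipation must be an identity for every $t>0$:
\begin{equation*}
  \varphi(u_t)+\frac12\int_0^t\Big(|u_r'|^2+|\nabla\varphi(u_r)|^2\Big)\,\d r=\varphi(u_0).
\end{equation*}

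To extract the Euler equation from this identity I would rewrite it, again via the chain rule, as $\int_0^t\big[\langle\nabla\varphi(u_r),u_r'\rangle+\frac12|u_r'|^2+\frac12|\nabla\varphi(u_r)|^2\big]\,\d r=0$ and observe that the integrand equals $\frac12|u_r'+\nabla\varphi(u_r)|^2$. This is a continuous, nonnegative function whose integral over $[0,t]$ vanishes for every $t>0$; differentiating in $t$ (equivalently, invoking continuity) forces it to vanish identically, so $u_t'=-\nabla\varphi(u_t)$ for all $t$, which is \eqref{gf}. The initial condition $\lim_{t\downarrow0}u_t=u_0$ is automatic from the assumed $C^1$ regularity of $u$ up to $t=0$.

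I do not expect a genuine obstacle in this smooth finite-dimensional setting; the only delicate point is that \eqref{eq:17} is an \emph{integrated} inequality, so one cannot differentiate it directly but must instead recognize the nonnegative integrand $\frac12|u_t'+\nabla\varphi(u_t)|^2$ and argue from its vanishing integral. The conceptual payoff, worth flagging, is that this Young inequality is the Euclidean trace of the metric chain-rule estimate $\frac{\d}{\dt}\varphi(u_t)\ge -|u_t'|\,|\nabla\varphi(u_t)|$: replacing $|u_t'|$ by the metric speed and $|\nabla\varphi|$ by the metric slope, the very same equality case $-\frac{\d}{\dt}\varphi(u_t)=\frac12|u_t'|^2+\frac12|\nabla\varphi(u_t)|^2$ is what singles out curves of maximal slope in an arbitrary metric space.
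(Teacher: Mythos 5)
Your proof is correct and takes essentially the same route as the paper: both rest on the chain rule $\varphi(u_t)=\varphi(u_0)+\int_0^t\langle\nabla\varphi(u_r),u_r'\rangle\,\d r$ combined with the completion-of-the-square identity, so that \eqref{eq:17} forces $\frac12\int_0^t|u_r'+\nabla\varphi(u_r)|^2\,\d r\le 0$ and hence $u_r'=-\nabla\varphi(u_r)$, first a.e.\ and then everywhere by continuity. The only cosmetic difference is that you first upgrade \eqref{eq:17} to an equality via Young's inequality before identifying the square, whereas the paper substitutes the chain-rule identity directly into \eqref{eq:17}; the two steps are interchangeable.
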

\begin{proof}
Il $u$ is a $C^1$ curve the chain rule yields
\begin{align}\label{eigf}
  \varphi(u_t)&=\varphi(u_0)+\int_0^t\langle\nabla\varphi(u_r),u_r'\rangle\,\d r,
\end{align}
so that \eqref{eq:17} yields
\begin{displaymath}
  \frac12\int_0^t\big|u_r'+\nabla\varphi(u_r)\big|^2\,\d r=
  \frac12\int_0^t\Big(|u_r'|^2+|\nabla\varphi(u_r)|^2\Big)\,\d r+ \int_0^t\langle\nabla\varphi(u_r),u_r'\rangle\,\d r\le0,
\end{displaymath}
and therefore $\u_r=-\nabla\varphi(u_r)$ for $\Leb 1$-a.e.\ $r\in
(0,t)$. Since $t$ is arbitrary and $u\in C^1$, $u$ solves \eqref{gf}.
\end{proof}
Notice that in the previous formulation we did not use the
$\lambda$-convexity of $\varphi$: the argument only relies on the chain rule.

In the following proposition we show that also the evolution variational
inequality \eqref{eq:6} characterizes a solution of \eqref{gf}. In
fact, if \eqref{eq:6} admits a solution \emph{for every initial datum}
$u_0$, then $\varphi$ is $\lambda$-convex.

\begin{proposition}[Characterization of Gradient Flows through the EVI]\label{gfevi}
  If $u:[0,+\infty)\to \R^d$ is a $C^1$ curve solving \eqref{eq:6}
  then $u$ is a solution to \eqref{gf}.% $\varphi:\R^d\freccia\R$ is a $C^2$ $\lambda$-convex function for
%   some $\lambda\in\R$,
%   then a $C^1$ curve $u:(0,+\infty)\freccia\R^d$ satisfies \eqref{gf} if and only if it solves the EVI \eqref{evi}.
\end{proposition}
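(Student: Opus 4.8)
The plan is to show that the EVI forces $u$ to satisfy the gradient flow equation pointwise. I would start by fixing a time $t$ and exploiting the fact that \eqref{eq:6} holds for \emph{every} test point $v \in \R^d$. Since $u$ is assumed $C^1$, the left-hand side of \eqref{eq:6} is genuinely differentiable, so I can rewrite the inequality as
\begin{displaymath}
  \la u_t', u_t - v\ra + \frac{\lambda}{2}|u_t - v|^2 \le \varphi(v) - \varphi(u_t)
  \quad \forevery v \in \R^d.
\end{displaymath}
This says that the affine-plus-quadratic function $v \mapsto \varphi(u_t) + \la u_t', u_t - v\ra + \frac{\lambda}{2}|u_t-v|^2$ lies below $\varphi$ everywhere and touches it at $v = u_t$. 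In other words, $-u_t'$ is a kind of generalized subgradient of $\varphi$ at $u_t$ with a quadratic correction.

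The key step is to extract the identity $u_t' = -\nabla\varphi(u_t)$ from this one-sided bound. Because $\varphi \in C^2$ (hence differentiable) and the inequality is saturated at $v = u_t$, I would set $v = u_t + \eps w$ for an arbitrary direction $w \in \R^d$ and a small parameter $\eps$, then expand both sides. The quadratic term contributes $\frac{\lambda}{2}\eps^2|w|^2 = O(\eps^2)$, while the right-hand side expands as $\varphi(u_t + \eps w) - \varphi(u_t) = \eps\la \nabla\varphi(u_t), w\ra + O(\eps^2)$. Comparing the $O(\eps)$ terms gives $-\eps\la u_t', w\ra \le \eps\la\nabla\varphi(u_t), w\ra + O(\eps^2)$. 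Dividing by $\eps > 0$ and letting $\eps \downarrow 0$ yields $-\la u_t', w\ra \le \la \nabla\varphi(u_t), w\ra$; doing the same with $\eps < 0$ (equivalently replacing $w$ by $-w$) reverses the inequality. Hence $\la u_t' + \nabla\varphi(u_t), w\ra = 0$ for every direction $w$, which forces $u_t' = -\nabla\varphi(u_t)$.

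Since this holds at every $t \in (0,+\infty)$ and $u$ is $C^1$, the curve solves \eqref{gf}; the initial condition is automatic from the regularity of $u$. I do not expect a serious obstacle here: the argument is a first-order perturbation that never uses the upper bound on $D^2\varphi$, only differentiability of $\varphi$ together with the fact that the EVI must hold against \emph{all} test points $v$. The only point deserving care is to justify differentiating the squared-distance term in time, which is immediate since $u \in C^1$ makes $t \mapsto \frac12|u_t - v|^2$ continuously differentiable with derivative $\la u_t', u_t - v\ra$. It is worth emphasizing, as the surrounding discussion already hints, that this converse direction does not by itself need $\lambda$-convexity of $\varphi$; that hypothesis is what guarantees \emph{existence} of EVI solutions for every initial datum, and the remark that solvability for all $u_0$ conversely implies $\lambda$-convexity would be established separately by choosing two nearby initial data and comparing.
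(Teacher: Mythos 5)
Your proposal is correct and follows essentially the same route as the paper: differentiate $t\mapsto\frac12|u_t-v|^2$ using the $C^1$ regularity to obtain the subgradient-type inequality $\la u_t',u_t-v\ra\le\varphi(v)-\varphi(u_t)-\frac\lambda2|u_t-v|^2$, test it with $v=u_t+\eps\xi$, divide by $\eps$, let $\eps\downarrow0$, and use the arbitrariness of the direction (your $\pm w$ trick is exactly the paper's use of arbitrary $\xi$) to conclude $u_t'=-\nabla\varphi(u_t)$. Your closing remark about $\lambda$-convexity being needed only for existence, not for this converse, matches the paper's surrounding discussion and its separate Proposition on recovering convexity from solvability for all initial data.
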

\begin{proof}
  Applying the chain rule for the squared distance function
  $\frac12\EdSquare\cdot v$ we easily have
  % Using the fact that $d$ is induced by a scalar product, we can express the differential inequality \eqref{evi} for a Lipschitz curve $u_t$ in the following way:
\begin{align}\label{subdiff}
  \la u_t',u_t-v\ra\le
  \varphi(v)-\varphi(u_t)-\frac\lambda2\EdSquare{u_t}v
  %\frac{\lambda}{2}|u_t|^2\geq\langle-\u_t-\lambda u_t,v-u_t\rangle
  \quad\forevery v\in\R^d,\ t>0.
\end{align}
Choosing $v:=u_t+\eps \xi$, for $\eps>0$ and $\xi\in \R^d$ and
dividing by $\eps$ we obtain
\begin{displaymath}
  -\la u_t',\xi\ra\le
  \eps^{-1}\Big(\varphi(u_t+\eps\xi)-\varphi(u_t)\Big)-\frac{\lambda\eps}2|\xi|^2\quad
  \forevery \xi\in \R^d.
\end{displaymath}
Passing to the limit as $\eps\down0$ we eventually get
\begin{displaymath}
  -\la u_t',\xi\ra\le \la
  \nabla\varphi(u_t),\xi\ra\quad\forevery\xi\in \R^d,
\end{displaymath}
so that $-u_t'=\nabla\varphi(u_t)$.
% Since the function $v\mapsto \frac{1}{2}|v|^2$ is $1$-convex, i.e.
% \begin{equation*}
%  |(1-s)v_0+sv_1|^2\leq(1-s)|v_0|^2+s|v_1|^2-s(1-s)|v_0-v_1|^2\quad\forevery v_0,v_1\in\R^d,\,\forevery s\in[0,1],
% \end{equation*}
% the $\lambda$-convexity of $\varphi$ is equivalent to the convexity of the function $v\mapsto\varphi(v)-\frac{\lambda}{2}|v|^2$.
% \noindent Then, by definition of subdifferential of a convex function and by the fact that the function $v\mapsto\varphi(v)-\frac{\lambda}{2}|v|^2$
% at a point $u$ is given by $\nabla\varphi(u)-\lambda u$, \eqref{subdiff} holds for a curve $u_t$ if and only if
% \begin{equation*}
%  -\u_t-\lambda u_t=\nabla\varphi(u_t)-\lambda u_t\quad\Leftrightarrow\quad\u_t=-\nabla\varphi(u_t).
%\end{equation*}
\end{proof}

\begin{proposition}\label{eviconv1} Let us suppose that there exists a
  $C^1$ semigroup $\tilde\mtS_t:\R^d\to\R^d$, $t\ge0$, of smooth maps such that for every
  $u_0\in \R^d$ the curve $u_t:=\tilde\mtS_t(u_0)$ satisfies \eqref{eq:6}. Then $\varphi$ is $\lambda$-convex.
\end{proposition}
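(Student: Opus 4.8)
The plan is to extract from the evolution variational inequality \eqref{eq:6}, evaluated at the initial time, a pointwise sub-gradient inequality holding at every point of $\Rd$, and then to invoke the equivalence between such an inequality and $\lambda$-convexity recorded in \eqref{eq:4}. The full semigroup property is not really needed: it is enough that through every initial datum there passes one curve satisfying \eqref{eq:6}.

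First I would fix an arbitrary $u_0\in\Rd$ and consider the curve $u_t:=\tilde\mtS_t(u_0)$, which by hypothesis satisfies \eqref{eq:6}. Since $\frac\d\dt\frac12\EdSquare{u_t}v=\la u_t',u_t-v\ra$, the inequality \eqref{eq:6} reads
\[
  \la u_t',u_t-v\ra+\frac\lambda2\EdSquare{u_t}v\le \varphi(v)-\varphi(u_t)\quad\forevery v\in\Rd,\ t>0.
\]
Because $\tilde\mtS$ is a $C^1$ semigroup of smooth maps, $t\mapsto u_t$ is $C^1$ with $u_0=\tilde\mtS_0(u_0)$, so I can let $t\downarrow0$: the left-hand side converges to $\la\dot u_0,u_0-v\ra+\frac\lambda2\EdSquare{u_0}v$, where $\dot u_0:=\frac\d\dt\tilde\mtS_t(u_0)\big|_{t=0}$, and the right-hand side to $\varphi(v)-\varphi(u_0)$ by continuity of $\varphi$. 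This yields
\[
  \la\dot u_0,u_0-v\ra+\frac\lambda2\EdSquare{u_0}v\le \varphi(v)-\varphi(u_0)\quad\forevery v\in\Rd.
\]

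Next I would identify the velocity $\dot u_0$, exactly as in the proof of Proposition \ref{gfevi}: inserting $v:=u_0+\eps\xi$, dividing by $\eps>0$ and letting $\eps\downarrow0$ gives $-\la\dot u_0,\xi\ra\le\la\nabla\varphi(u_0),\xi\ra$ for every $\xi\in\Rd$, whence $\dot u_0=-\nabla\varphi(u_0)$. Substituting back and rearranging produces
\[
  \varphi(v)-\varphi(u_0)\ge \la\nabla\varphi(u_0),v-u_0\ra+\frac\lambda2\EdSquare v{u_0},
\]
which is precisely the right-hand part of the sub-gradient inequality \eqref{eq:4} with $x_0=u_0$, $x_1=v$. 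Since $u_0$ and $v$ are arbitrary, this lower bound holds for every pair of points.

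Finally, this sub-gradient inequality is one of the equivalent characterizations of $\lambda$-convexity: it states that $\tilde\varphi:=\varphi-\frac\lambda2|\cdot|^2$ lies above each of its tangent hyperplanes, i.e. (since $\tilde\varphi\in C^1$) that $\tilde\varphi$ is convex, which by \eqref{eq:18} means that $\varphi$ is $\lambda$-convex, as desired. I expect the only delicate point to be the passage to the limit $t\downarrow0$ in \eqref{eq:6}, where the $C^1$ regularity of the semigroup is essential to guarantee the existence and continuity of $\dot u_0$; once the sub-gradient inequality is available at every base point, the conclusion is immediate from the standard equivalences.
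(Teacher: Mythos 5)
Your proof is correct, but it closes the argument through a different characterization of $\lambda$-convexity than the paper does. The shared core is identical: evaluate \eqref{eq:6} in the limit $t\downarrow0$ along the flow curve and identify the initial velocity $\dot u_0=-\nabla\varphi(u_0)$ by the perturbation $v=u_0+\eps\xi$, exactly as in Proposition \ref{gfevi}. After that the two arguments diverge. You flow a \emph{single} base point and land on the sub-gradient inequality
\[
  \varphi(v)\ge\varphi(u_0)+\la\nabla\varphi(u_0),v-u_0\ra+\frac\lambda2\EdSquare v{u_0}
  \quad\forevery u_0,v\in\R^d,
\]
i.e.\ the right half of \eqref{eq:4} (the left half follows for free by swapping the roles of $u_0$ and $v$), and then conclude by the equivalence of \eqref{eq:4} with $\lambda$-convexity. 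The paper instead flows \emph{both} endpoints $u^0,u^1$ of a segment, tests the EVI of each curve against the opposite endpoint, and obtains the chain $\frac{\d}{\d s}\varphi(u^s)\restr{s=0}\le\varphi(u^1)-\varphi(u^0)\le\frac{\d}{\d s}\varphi(u^s)\restr{s=1}$, i.e.\ monotonicity of the directional derivative along segments (essentially the characterization \eqref{eq:2}), and it treats only $\lambda=0$ ``for simplicity''. Your route buys three things: general $\lambda$ is handled directly with no renormalization, only one curve per base point is needed, and it substantiates your opening remark that the semigroup law is never actually used --- only the existence, through each point, of one $C^1$ curve satisfying \eqref{eq:6} near $t=0$. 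The one delicate point, which you correctly flag, is that \eqref{eq:6} is a priori a statement for $t>0$, so the limit $t\downarrow0$ needs $C^1$ regularity of the curve up to $t=0$; alternatively you could first substitute $u_t'=-\nabla\varphi(u_t)$ for $t>0$ via Proposition \ref{gfevi} and then let $t\downarrow0$ using only continuity of $\nabla\varphi$ and of the curve at the initial time, which slightly weakens what is required of $\tilde\mtS$.
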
 
\begin{proof}
  We consider for simplicity the case $\lambda=0$;
  for arbitrary $u^0,u^1\in \R^d$ we set
  \begin{displaymath}
    u^s:=(1-s)u^0+su^1,\quad u^s_t:=\tilde\mtS_t(u^s)
  \end{displaymath}
  and we want to show that
  \begin{displaymath}
    \frac\d{\d s}\varphi(u^s)\restr{s=0}\le \frac\d{\d s}\varphi(u^s)\restr{s=1}.
  \end{displaymath}
  We get
  \begin{align*}
    \frac\d{\d s}&\varphi(u^s)\restr{s=0}=
    \la \nabla\varphi(u^0),u^1-u^0\ra\topref{gf}=
    -\la \frac\d\dt u^0_t\restr{t=0},u^1-u^0\ra=
    \frac\d{\d
      t}\Big(\frac12\EdSquare{u^0_t}{u^1}\Big)\restr{t=0}
    \\&\topref{eq:6}\le\varphi(u^1)-\varphi(u^0)
    \topref{eq:6}\le -\frac\d{\d
      t}\Big(\frac12\EdSquare{u^0}{u^1_t}\Big)\restr{t=0}
    \topref{gf}\le\la \nabla\varphi(u^1),u^1-u^0\ra=
    \frac\d{\d s}\varphi(u^s)\restr{s=1}\qedhere
  \end{align*}
\end{proof}

% \begin{remark}\label{rem:evismooth} Propositions \ref{evicontr} and \ref{eviconv} can be proved assuming just that the map $[0,+\infty)\ni t\mapsto u_t$ is continuous (instead of $C^1$) and that $u_t$ satisfies the EVI
% \begin{equation}
%  \frac{d^+}{dt}\frac{1}{2}d^2(u_t,v)+\frac{\lambda}{2}d^2(u_t,v)\leq\varphi(v)-\varphi(u_t)\quad\forevery v\in\R^d,\,\forevery t\in(0,+\infty)
% \end{equation}
% where 
% \begin{equation}\label{rightder}
% \frac{d^+}{dt}f(t)=\underset{h\downarrow0}{\limsup}\,\frac{f(t+h)-f(t)}{h}.
% \end{equation} 
% Moreover, in the proof we do not use any Euclidean property of the distance $d$.

% For a metric version of Propositions \ref{evicontr} and \ref{eviconv} we refer respectively to Proposition 3.1 and Theorem 3.2 of \cite{DanSav}.
% \end{remark}

\subsection{Extensions to more general functional settings}
The simple finite dimensional theory for smooth functionals has been
extended in various directions; without claiming any completeness, we
quote here four different points of view:
\paragraph{\emph{The theory of differential inclusions and maximal monotone
  operators in Hilbert spaces,}}
developed in the seventies by \textsc{Komura} \cite{Komura67},
%% \textsc{Dorroh} \comments{quale Dorroh?} \cite{Dorroh69},
\textsc{Crandall-Pazy} \cite{Crandall-Pazy69}, \textsc{Crandall-Liggett} \cite{Crandall-Liggett71},
\textsc{Br\'ezis}
\cite{Brezis71}, \textsc{B\'enilan} \cite{Benilan72}, \textsc{J.L.\ Lions} \cite{Lions69}:
we refer to the monographs
\cite{Brezis73,Barbu76,Lions69}.
In this framework one considers the gradient flow generated by a proper
lower semicontinuous $\lambda$-convex functional
$\phi:H\to(-\infty,+\infty]$, where $H$ is a separable Hilbert
space. By using tools of convex analysis, clever regularization
techniques, and replacing $\nabla\varphi$
with the multivalued subdifferential operator $\partial\phi$, one can
basically reproduce all the estimates and results we briefly
discussed in the finite dimensional setting which just depend on the
lower bound of the Hessian of $\varphi$, avoiding any strong compactness assumptions.

In this framework, the resolvent operator
$J_\tau:=(I+\tau\partial\phi)^{-1}$ is single-valued and non-expansive, i.e.
\begin{align}\label{contrh}
\sfd(J_{\ttau}[u], J_{\ttau}[v])\leq \sfd(u,v)\quad\forevery u,v\in H,\quad\ttau>0.
\end{align}
This property is the key ingredient to prove, as in the \textsc{Crandall-Ligget{t}} generation theorem
\cite{Crandall-Liggett71}, uniform convergence of the exponential formula
\begin{equation}\label{exp}
 u_t=\underset{n\rightarrow+\infty}{\lim}(J_{t/ n})^n[u_0],\quad \sfd(u_t,(J_{t/n})^n[u_0] )\leq\frac{2|\partial\phi|(u_0)t}{\sqrt{n}}
\end{equation}
 and therefore to define a contraction semigroup on $\overline{D(\phi)}$.

Being generated by a convex functional, this semigroup exhibits a nice regularization effect, since $u_t\in D(\partial\phi)$ even if $u_0\in\overline{D(\phi)}$. 
Moreover, the curve $u_t$ can be characterized as the unique solution of the evolution variational inequality
\eqref{eq:6}, whose formulation goes back to \cite{Lions-Stampacchia67}.
Optimal error estimates for the implicit Euler discretization in the spirit of \eqref{eq:23} have been obtained by
\cite{Baiocchi89,Rulla96,Savare96,Nochetto-Savare-Verdi00}.
\paragraph{\emph{The theory of the curves of maximal slope in metric
    spaces,}} developed in the eighties by \textsc{De Giorgi, Degiovanni, Marino, Tosques}
in a series of papers originating from
\cite{DeGiorgi-Marino-Tosques80,DeGiorgi-Degiovanni-Marino-Tosques83},
and culminating in
\cite{Degiovanni-Marino-Tosques85,Marino-Saccon-Tosques89} (but see also the
more recent
\cite{Cardinali-Colombo-Papalini-Tosques97} and the presentation
of \cite{Ambrosio95,Ambrosio-Gigli-Savare05}).
Here $\phi:X\to(-\infty,+\infty]$ is a proper and lower semicontinuous
functional defined in the complete metric space $X$ and 
one looks for absolutely continuous curves satisfying a
suitable form of the Energy dissipation inequality \eqref{ei}, where
$|u'|$ should be interpreted as the \emph{metric velocity of the curve
  $u$} and $|\nabla\varphi(u)|$ should be replaced by the \emph{metric
  slope of $\phi$}. The theory is usually based on local compactness
of the sublevels of $\phi$ and various kind of assumptions on its
slope, yielding in particular its lower semicontinuity and the
possibility to write a weak form of the chain rule.
The advantage of this approach relies on its flexibility, but in
general metric spaces uniqueness and stability properties of curves of
maximal slope are not known.

\paragraph{\emph{Limits of discrete solutions, generalized minimizing
    movements.}} This is the weakest approach, which has been
clarified in \cite{DeGiorgi93} and independently applied to
different kind of problems (see e.g.\ 
\cite{Luckhaus90}, \cite{Gianazza-Savare96}, \cite{Jordan-Kinderlehrer-Otto98},
\cite{Mielke-Theil-Levitas02}). It just provides a
general
approximating scheme which is quite useful to construct some limit
curves by compactness arguments, but one can hardly deduce refined
properties of
these curves from general metric results and each example {deserves} a
careful
ad hoc investigation.

\paragraph{\emph{Evolution variational inequalities in metric
    spaces:}}
this is the strongest point of view, which is related to the metric evolution variational inequality
\eqref{eq:6} and goes back to \textsc{B\'enilan} \cite{Benilan72} notion of integral solutions
to evolution equations in Banach spaces. Its application to gradient flows in metric spaces
has been developed in \cite{Ambrosio-Gigli-Savare05} and it will be adopted in these notes.

\end{section}
\begin{section}{Gradient flows and evolution variational inequalities in metric spaces}\label{sect:metricgf}

The aim of this section is to study
the metric notion of gradient flows associated to the (metric
formulation of the) evolution
variational inequality \eqref{eq:6}.
 
Throughout the rest of these notes, $(X,\sfd)$ will be a
\emph{complete and separable metric space} and
$\phi:X\freccia(-\infty,+\infty]$ a
proper and l.s.c. functional on $X$ with non empty
domain $D(\phi)=\{v\in X:\phi(v)<+\infty\}$. 
We will look for curves $u:[0,+\infty)\freccia X$ which satisfy
properties that depend only on the metric structure of $X$
and that in the case of a smooth function $\phi=\varphi$ on $X=\R^d$ satisfy the ODE \eqref{gf}.

\subsection{A few metric concepts}
Let us first recall the notion of \emph{metric velocity} and
\emph{metric slope} (see e.g.\ \cite{Ambrosio-Gigli-Savare05}).

\begin{definition}[Absolutely continuous curves] We say that a curve $v:(a,b)\subset\R\freccia X$ belongs to $AC^p_{(\mathrm{loc})}(a,b;X)$ for some $p\in[1,+\infty]$ if there exists $m\in\rL^p_{(\mathrm{loc})}(a,b)$ such that
\begin{equation}\label{accurv}
 {\Dist{v_s}{v_t}}\leq\int_s^tm(r)\,dr\qquad \forevery a<s\leq t <b.
\end{equation}
If $p=1$ we say that $v$ is a \emph{(locally) absolutely continuous curve}.
\end{definition}

\begin{theorem}[Metric derivative]
  If $v:(a,b)\freccia X$ is an absolutely continuous curve then the limit
\begin{equation}\label{metricder}
|\v|(t)=\underset{s\freccia t}{\lim}\frac{{\Dist{v_s}{v_t}}}{|t-s|}
\end{equation}
exists for $\LL^1$-a.e. $t\in(a,b)$ and it is called \emph{metric derivative} of $v$ at the point $t$. Moreover, the function $t\mapsto|\v|(t)$ belongs to $\rL^1(a,b)$, it is an admissible integrand for the right hand side of \eqref{accurv}, and it is minimal in the following sense:
\begin{align*}
|\v|(t)\leq m(t)\quad\text{for $\LL^1$-a.e. $t\in(a,b)$, for each function $m$ satisfying \eqref{accurv}.}
\end{align*}
\end{theorem}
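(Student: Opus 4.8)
The plan is to construct an explicit minimal candidate for $|\v|$ out of \emph{scalar} auxiliary functions, exploiting the separability of $X$ to pass between the metric and the real-valued settings. First I would fix a countable dense set $\{x_n\}_{n\in\N}\subset X$ and, for each $n$, consider the real function $g_n(t):=\Dist{v_t}{x_n}$. The triangle inequality gives $|g_n(s)-g_n(t)|\le \Dist{v_s}{v_t}\le\int_s^t m(r)\,\d r$ whenever $m$ is an admissible integrand in \eqref{accurv}, so each $g_n$ is a real absolutely continuous function on $(a,b)$; by the classical scalar theory it is differentiable $\LL^1$-a.e., and $|g_n'(t)|\le m(t)$ for $\LL^1$-a.e.\ $t$ (comparing difference quotients with $m$ at its Lebesgue points).

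Next I would set $\bar m(t):=\sup_{n}|g_n'(t)|$, a Borel function defined $\LL^1$-a.e.\ as a countable supremum of measurable functions. Since each $|g_n'|\le m$ a.e., we get $\bar m(t)\le m(t)$ for $\LL^1$-a.e.\ $t$, for \emph{every} $m$ satisfying \eqref{accurv}; this yields the minimality statement once I establish $|\v|=\bar m$. The crucial point is that $\bar m$ is itself admissible. Here density enters: for fixed $s,t$ the triangle inequality gives $\Dist{v_s}{v_t}\ge|g_n(s)-g_n(t)|$ for all $n$, while choosing $x_n\to v_t$ along the dense sequence forces $|g_n(s)-g_n(t)|\to\Dist{v_s}{v_t}$, so that $\Dist{v_s}{v_t}=\sup_n|g_n(s)-g_n(t)|$. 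Writing $g_n(s)-g_n(t)=-\int_s^t g_n'(r)\,\d r$ and estimating $|g_n'|\le\bar m$ gives $|g_n(s)-g_n(t)|\le\int_s^t\bar m(r)\,\d r$; taking the supremum over $n$ produces $\Dist{v_s}{v_t}\le\int_s^t\bar m(r)\,\d r$, so $\bar m$ is admissible and in particular $\bar m\in\rL^1$.

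Finally I would show that the limit \eqref{metricder} exists a.e.\ and equals $\bar m(t)$ by a squeeze. At every Lebesgue point $t$ of $\bar m$ the admissibility just proved gives the upper bound $\limsup_{s\to t}\Dist{v_s}{v_t}/|t-s|\le\bar m(t)$. For the lower bound, at each common differentiability point and for every fixed $n$ one has $\Dist{v_s}{v_t}/|t-s|\ge|g_n(s)-g_n(t)|/|t-s|\to|g_n'(t)|$, hence $\liminf_{s\to t}\Dist{v_s}{v_t}/|t-s|\ge\sup_n|g_n'(t)|=\bar m(t)$. Combining the two bounds shows the limit exists for $\LL^1$-a.e.\ $t$ and coincides with $\bar m(t)$, which therefore lies in $\rL^1$, is admissible, and is minimal.

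I expect the main obstacle to be the admissibility of $\bar m$, specifically the identity $\Dist{v_s}{v_t}=\sup_n|g_n(s)-g_n(t)|$: this is exactly where separability is used, and one must check that the supremum recovers the \emph{full} distance (which forces approximating the point $v_t$ by the dense sequence) rather than merely a lower bound. The interchange of the countable supremum with $\LL^1$-a.e.\ differentiability, keeping $\bar m$ measurable and preserving the pointwise derivative bounds, is the remaining routine-but-essential technical ingredient.
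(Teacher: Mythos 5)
Your proof is correct, and it is essentially the canonical argument: the paper itself states this theorem without proof, referring to \cite{Ambrosio-Gigli-Savare05}, and the proof given there (Theorem 1.1.2) is exactly your scheme --- scalar auxiliary functions $g_n(t)=\Dist{v_t}{x_n}$, the a.e.\ defined supremum $\bar m=\sup_n|g_n'|$, the identity $\Dist{v_s}{v_t}=\sup_n|g_n(s)-g_n(t)|$ to prove admissibility, and the squeeze at common Lebesgue/differentiability points. The only (immaterial) difference is that the cited proof takes the dense sequence in the image $v((a,b))$, which is automatically separable as the continuous image of an interval, whereas you take it in $X$ itself; this costs you the standing separability assumption on $(X,\sfd)$, which the paper does make, so your argument goes through verbatim in its setting.
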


\begin{definition}[Metric Slope] The metric slope of $\phi$ at a point $v\in X$ is given by
\begin{equation}
  |\partial\phi|(v)=
  \begin{cases}
      +\infty&\text{if }v\not\in \Dom\phi,\\
      0&\text{if $v\in \Dom\phi$ is isolated,}\\
      \displaystyle\limsup\limits_{w\to v}\frac{\big(\phi(v)-\phi(w)\big)^+}
      {\Dist vw}&\text{otherwise.}
     \end{cases}
\end{equation}
\end{definition}

\subsection{Structural properties of solutions to Evolution
  Variational Inequalities}

The next (quite restrictive) definition is modeled
on the case of $\lambda$-convex functionals in Euclidean-like spaces
and has been introduced and discussed in \cite[Chap.~4]{Ambrosio-Gigli-Savare05}.
\begin{definition}[\EVIname\ and Gradient flow]
  \label{def:GFlow}
  % and let $u_0\in \overline{\Dom\phi }$.
  A solution of the evolution variational inequality
  %starting from $u_0$
  $\EVI X\DistName\phi\lambda$, $\lambda\in \R$, is a
  %locally Lipschitz
  locally absolutely continuous curve
  $u:t\in (0,+\infty)\mapsto u_t\in \Dom\phi $ such that
  %$\lim_{t\downarrow0}u_t=u_0$ and
  \begin{equation}
    \label{eq:EVI}
    \frac 12 \Urd\DistSquare {u_t}v+
    \frac \lambda 2\DistSquare{u_t}v
    \le \phi(v)-\phi(u_t)
   \quad
   \Leb 1\text{-a.e.\ in }(0,+\infty),\quad
   \forevery v\in \Dom\phi .
    \tag{\EVIshort\lambda}
  \end{equation}
  A $\lambda$-gradient flow of $\phi$ is a family of continuous maps
  $\FlowName_t:\overline{D(\phi)}\to D(\phi)$, $t>0$, such that for every $u\in \overline{D(\phi)}$
  \begin{subequations}
    \begin{equation}
      \lim_{t\downarrow0}\Flow tu=u=:\Flow 0u,\qquad
      \Flow{t+h}u=\Flow h{\Flow tu}\quad\text{for every $t,h\ge 0,$}
      \label{eq:18bis}
  \end{equation}
  \begin{equation}
    \label{eq:28bis}
    \text{the curve }t\mapsto \Flow tu\quad\text{is  a solution of \EVI X\DistName\phi\lambda}.
  \end{equation}
  \end{subequations}
\end{definition}
The next result shows that \eqref{eq:EVI} can be formulated avoiding
differentiation and without
assuming the absolute continuity of $u$ (see \cite{Savare10} for the proof).
\begin{theorem}[Derivative free characterization of solutions to \eqref{eq:EVI}]
  \label{thm:uniqueness}
  A curve $u:(0,+\infty)\to
  \overline{\Dom\phi}$ is a solution of $\EVI X\DistName\phi\lambda$
  according to Definition
  \ref{def:GFlow} if and only if 
  for every $s,t\in (0,+\infty)$ with $s<t$ and $v\in \Dom\phi$
  \begin{equation}
    \label{eq:cap1:78bis}
    \frac {\rme^{\lambda(t-s)}}2\DistSquare{u_t}v-
    \frac 12\DistSquare{u_s}v\le
    \sfE\lambda{t-s}\Big(\phi(v)-\phi(u_t)\Big).
    \tag{$\EVIname_\lambda'$}
    %\quad\forevery\, v\in \Dom\phi .
  \end{equation}
\end{theorem}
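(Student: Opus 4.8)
The plan is to establish the equivalence between the differential formulation \eqref{eq:EVI} and the integrated formulation \eqref{eq:cap1:78bis}. I would first prove the easier direction, that \eqref{eq:cap1:78bis} implies \eqref{eq:EVI}. Fixing $v\in\Dom\phi$, the function $t\mapsto \frac12\rme^{\lambda t}\DistSquare{u_t}v$ should turn out to be locally absolutely continuous, and \eqref{eq:cap1:78bis} rewritten as
\begin{displaymath}
  \frac{\rme^{\lambda t}\DistSquare{u_t}v-\rme^{\lambda s}\DistSquare{u_s}v}{2}\le \rme^{\lambda s}\sfE\lambda{t-s}\big(\phi(v)-\phi(u_t)\big)
\end{displaymath}
expresses a differential inequality in integrated form. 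Dividing by $t-s$ and letting $s\uparrow t$, using $\sfE\lambda{t-s}/(t-s)\to 1$ and the continuity of $t\mapsto\phi(u_t)$ along the curve, would recover the pointwise inequality \eqref{eq:EVI} at almost every $t$. A point requiring care is that $\phi(u_t)$ appears evaluated at the right endpoint $t$ in \eqref{eq:cap1:78bis}, so I must first argue that $t\mapsto\phi(u_t)$ is (at least lower semi)continuous, or work only at points of approximate continuity of this map and of $|\u|$.

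The reverse direction, from \eqref{eq:EVI} to \eqref{eq:cap1:78bis}, is where I expect the main obstacle. Starting from the differential inequality and multiplying by the integrating factor $\rme^{\lambda t}$, the left-hand side of \eqref{eq:EVI} becomes $\frac12\rme^{-\lambda t}\frac{\d}{\dt}\big(\rme^{\lambda t}\DistSquare{u_t}v\big)$, as already noted in the Euclidean computation \eqref{eq:6}. I would like to integrate this over $(s,t)$ against the weight $\rme^{\lambda t}$, but the genuine difficulty is that \eqref{eq:EVI} holds only almost everywhere and, more seriously, $\phi(u_t)$ on the right-hand side is not controlled pointwise from above — only the monotonicity-type estimates coming from the energy dissipation are available. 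The key technical step is therefore to establish that $t\mapsto\phi(u_t)$ is nonincreasing (or at least that it is locally of bounded variation with the correct sign of its singular part), so that $\phi(u_t)\le\phi(u_r)$ for $r\le t$ and the right endpoint value can be legitimately frozen when integrating.

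To carry this out I would proceed as follows. First, taking $v=u_s$ in \eqref{eq:EVI} and exploiting the local absolute continuity of $u$ together with the metric slope bound, one derives an a priori energy inequality showing $t\mapsto\phi(u_t)$ is nonincreasing; this is the heart of the argument and uses the interplay between $|\u|(t)$ and $\MetricSlope\phi{u_t}$ exactly as in the Euclidean energy identity \eqref{eq:7}. Second, with monotonicity of $\phi(u_\cdot)$ in hand, I fix $v$ and integrate \eqref{eq:EVI} in the form
\begin{displaymath}
  \frac{\d}{\dt}\Big(\rme^{\lambda t}\tfrac12\DistSquare{u_t}v\Big)\le \rme^{\lambda t}\big(\phi(v)-\phi(u_t)\big)\le \rme^{\lambda t}\big(\phi(v)-\phi(u_t)\big)
\end{displaymath}
over $(s,t)$; bounding $\phi(u_r)\ge\phi(u_t)$ for $r\in(s,t)$ lets me replace the integrand's $\phi(u_r)$ by the constant $\phi(u_t)$ and pull it out, producing precisely the factor $\int_s^t\rme^{\lambda r}\,\d r=\rme^{\lambda s}\sfE\lambda{t-s}$. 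After rearranging, this yields \eqref{eq:cap1:78bis}. The final subtlety is regularity at the boundary and the passage from the a.e. differential inequality to an everywhere integral inequality, which is handled by the absolute continuity of $t\mapsto \rme^{\lambda t}\DistSquare{u_t}v$ guaranteed by $u\in AC^2_{\mathrm{loc}}$ and by approximating the (merely continuous) curve $u$ on closed subintervals.
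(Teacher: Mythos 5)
A preliminary remark: the paper itself contains no proof of Theorem \ref{thm:uniqueness} (it refers to \cite{Savare10}), so your proposal can only be measured against what such a proof must actually supply. Your skeleton is the right one — two directions, with the monotonicity of $t\mapsto\phi(u_t)$ as the key lemma for the implication \eqref{eq:EVI}$\,\Rightarrow\,$\eqref{eq:cap1:78bis} — and your weighted-integration step is correct: once one knows $\phi(u_r)\ge\phi(u_t)$ for $r\in(s,t)$, integrating the a.e.\ inequality $\frac{\d}{\d r}\big(\rme^{\lambda r}\tfrac12\DistSquare{u_r}{v}\big)\le \rme^{\lambda r}\big(\phi(v)-\phi(u_r)\big)$ and freezing $\phi(u_t)$ yields exactly \eqref{eq:cap1:78bis}. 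The genuine gap is your proof of that key lemma. You propose to obtain monotonicity ``exactly as in the Euclidean energy identity \eqref{eq:7}'', from $v=u_s$ together with the metric slope bound. But \eqref{eq:7} is a pure chain-rule computation, and here there is no chain rule to invoke: $\phi$ is only proper and l.s.c.\ (no convexity whatsoever is assumed in this theorem), so a priori $t\mapsto\phi(u_t)$ is merely lower semicontinuous and locally integrable — not continuous, not BV, not differentiable anywhere — and the slope $\MetricSlope\phi{u_t}$ is not an upper gradient. What the choice $v=u_s$ combined with $\big|\frac{\d}{\dt}\Dist{u_t}{v}\big|\le|\u|(t)$ actually gives is $\phi(u_t)\le\phi(u_s)+|\u|(t)\,\Dist{u_t}{u_s}-\frac\lambda2\DistSquare{u_t}{u_s}$ and the symmetric estimate with $s,t$ exchanged: a \emph{two-sided}, Lipschitz-type bound which controls the possible \emph{increase} of $\phi\circ u$ by $|\u|^2$ but in no way excludes it. No first-order argument produces the sign.

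What a complete proof needs (and what \cite{Savare10} does) is a bootstrap: (i) upgrade $u$ from locally absolutely continuous to locally Lipschitz by applying the contraction estimate \eqref{eq:cap1:81} to the pair of solutions $u$ and its time translate $u_{\cdot+h}$, so that $\rme^{\lambda t}|\u|(t)$ is essentially nonincreasing; (ii) use the two-sided bound above to conclude that $\phi\circ u$ agrees a.e.\ with a locally Lipschitz function; (iii) only then integrate \eqref{eq:EVI} over $[t,t+h]$ with $v=u_t$ and expand \emph{both} sides to second order in $h$, namely $\tfrac12\DistSquare{u_{t+h}}{u_t}=\tfrac{h^2}2|\u|^2(t)+o(h^2)$ against $\int_t^{t+h}\big(\phi(u_t)-\phi(u_r)\big)\,\d r=-\tfrac{h^2}2(\phi\circ u)'(t)+o(h^2)$, which yields $(\phi\circ u)'\le-|\u|^2\le0$ a.e.\ and hence monotonicity. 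Without (i)--(ii) the expansion in (iii) is meaningless; this is precisely what your sketch skips. Finally, the converse direction carries an obligation you only assert: Definition \ref{def:GFlow} requires $u$ to be locally absolutely continuous with values in $\Dom\phi$, and the whole point of the derivative-free formulation (as the paper stresses) is that this is \emph{not} assumed. It is, in fact, the easy part: \eqref{eq:cap1:78bis} with $v=u_s$ gives at once $\phi(u_t)\le\phi(u_s)$ (monotonicity is free in this direction, the left-hand side being nonnegative) and $\DistSquare{u_s}{u_t}\le2\rme^{-\lambda(t-s)}\sfE\lambda{t-s}\big(\phi(u_s)-\phi(u_t)\big)$, after which Cauchy--Schwarz over finitely many disjoint intervals yields absolute continuity; and your worry about continuity of $\phi\circ u$ there is unnecessary, since letting $s\uparrow t$ with $t$ fixed keeps $\phi(u_t)$ frozen.
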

Notice that \eqref{eq:cap1:78bis} yields the pointwise right-upper
differential inequality
 \begin{equation}
    \label{eq:EVIevery}
    \frac 12 {\frac\d\dt\!\!}^+\DistSquare {u_t}v+
    \frac \lambda 2\DistSquare{u_t}v
    \le \phi(v)-\phi(u_t)
   \quad
   %\forevery t\in (0,+\infty),\
   \forevery v\in \Dom\phi ,
  \end{equation}
  at \emph{every} time $t>0$: here $ {\frac\d\dt\!\!}^+\zeta$ denotes
  the right-upper Dini derivative
  $\limsup_{h\down0}h^{-1}(\zeta(t+h)-\zeta(t))$.
  
  The next result collects many useful properties of solutions to
  $\EVI X\DistName\phi\lambda$
(see \cite{Savare10} and an analogous result of \cite{Ambrosio-Savare06}
in the Wasserstein framework): they reproduce in the metric framework
the estimates of the previous section and show that \eqref{eq:EVI}
contains all the information concerning the gradient flow of $\phi$.
\begin{theorem}[Properties of solutions to \eqref{eq:EVI}]
  \label{thm:main1}
  Let $u, u^1,u^2:[0,+\infty)\to X$ be solutions of
  $\EVI X\DistName\phi\lambda$.
  \begin{description}
  \item[$\lambda$-contraction and uniqueness]
    \begin{equation}
    \label{eq:cap1:81}
    \Dist{u^1_t}{u^2_t}\le \rme^{-\lambda (t-s)}
    \Dist{u^1_s}{u^2_s}\quad
    \forevery\, 0\le s<t<+\infty.
  \end{equation}
  In particular, for every $u_0\in \overline{\Dom\phi }$
  there is at most one solution $u$ of $\EVI X\DistName\phi\lambda$ satisfying
  the initial condition $\lim_{t\down0}u_t=u_0$.
\item[Regularizing effects]
  $u$ is \emph{locally Lipschitz continuous} in $(0,+\infty)$ and
   $u_t\in \DomainSlope\phi \subset \Dom\phi $
   \emph{for every $t>0$}.
   Moreover in the time interval $[0,+\infty)$
  \begin{gather}
     \text{the map $t \mapsto \phi(u_t)$ is
       non{-}increasing and (locally semi-, if $\lambda<0$) convex},
     \label{eq:Cap1:58}\\
     \label{eq:cap2:3}
     \text{the map $t\mapsto \rme^{\lambda t}
       \MetricSlope\phi{u_t}$ is non{-}increasing and right continuous,}
   \end{gather}
   the following 
   regularization/a priori estimate hold{s}
      \begin{equation}
        \label{eq:2bis}
        \frac {\rme^{\lambda t}}2\DistSquare{u_t}v+
        \sfE\lambda t\Big(\phi(u_t)-\phi(v)\Big)+\frac {\big(\sfE \lambda t\big)^2}2\MetricSlopeSquare\phi{u_t}\le
        \frac 12\DistSquare{u_0}v
      \end{equation}
      for every $v\in \Dom\phi $; in particular
    \begin{align}
      \label{eq:Cap12:111}
      \phi(u_t)&\le
      \phi(v)+
      \frac{1}{2 \sfE\lambda t}\DistSquare{u_0}{v},\\
      %\quad\text{for every } v\in \Dom\phi ,
    %\end{align}
    %\begin{align}
      \label{eq:Cap1:38}
      \MetricSlopeSquare\phi{u_t}&\le
      \frac{1}{2\rme^{\lambda t}-1}\MetricSlopeSquare\phi v+
      \frac1{(\sfE\lambda t)^2}\DistSquare{u_0}v\quad
      % \forevery\, v\in \DomainSlope\phi ,\
      \text{if }-\lambda t<\log 2.
      % \rme^{-2\lambda^- t}\MetricSlopeSquare\phi{u_t}\le
%       \MetricSlopeSquare\phi v+
%       \frac1{t^2}\DistSquare{u_0}v-\frac{\lambda}{t}
%       \Big(\frac12\DistSquare{u_t}v+
%       \frac1{t}\int_0^t \DistSquare{u_s}v\,ds\Big)
    \end{align}
    % \begin{gather}
%       \label{eq:cap2:12}
%       %\begin{aligned}
%         \DistSquare{u_t}{u_o}%&
%         \le \Big(\DistSquare{u_0}{u_o}+
%         2\sfE{\lambda+\lambda_o}t\big(\phi(u_o)-m_o\big)
%         \Big)\rme^{-(\lambda+\lambda_o) t}
%         %&%\text{if }&
%         % \lambda_o<0\text{ in }\eqref{eq:cap1:13};
%         % \\
% %         \label{eq:cap2:12bis}
% %         \DistSquare{u_t}{u_0}%&
% %         \le t\big(\phi(u_0)-m_o\big)\quad
% %         % &
% %         \text{if }
% %         % &
% %         \lambda_o=0\text{ in }\eqref{eq:cap1:13}.
% %       %\end{aligned}
%       \end{gather}
    \item[Asymptotic expansion for $t\downarrow0$]
      If $u_0\in \DomainSlope\phi $ and $\lambda\le 0$ then for every $v\in \Dom\phi $ and $t\ge0$
      \begin{equation}
        \label{eq:3bis}
        \frac {\rme^{2\lambda t}}2\DistSquare{u_t}v-
        \frac 12\DistSquare{u_0}v\le \sfE{2\lambda} t\big(\phi(v)-\phi(u_0)\big)+
        \frac {t^2}2\MetricSlopeSquare\phi{u_0}.
      \end{equation}
%       \item[$\lambda$-regularity of $\phi$]
%     The functional $\phi$ satisfies the $\lambda$-regularity condition %(see Definition \ref{def:Reglambda})
%     \begin{equation}
%       \label{eq:4}
%       \MetricSlope\phi u=\GSlope\lambda\phi u\quad\text{for every }u\in  D\cap \Dom\phi .
%     \end{equation}
  \item[Right and left limits, energy identity]
  For every $t>0$ the \emph{right limits}
  \begin{equation}
    \label{eq:Cap12:23}
    |\dot u_{t+}|:=\lim_{h\down0}\frac{\Dist{u_t}{u_{t+h}}}h,\quad
    \frac{\d}{\d t}\phi(u_{t+}):=
    \lim_{h\down0}\frac{\phi(u_{t+h})-\phi(u_t)}h
    % \frac 12\DistSquare{u_t}v:=
%     \lim_{h\down0}\frac 1{2h}
%     \Big(\DistSquare{u_{t+h}}v-
%     \DistSquare{u_{t}}v\Big)
  \end{equation}
  exist, they satisfy
  \begin{equation}
      \label{eq:Cap12:24}
      \frac{\d}{\d t}\phi(u_{t+})=-|\dot u_{t+}|^2=
      -\MetricSlopeSquare\phi{u_t},      
    \end{equation}
    and they define a right continuous map.
    \eqref{eq:Cap12:23} and \eqref{eq:Cap12:24} hold
    at $t=0$ iff $u_0\in \DomainSlope\phi $.
    Moreover, there exists an at most countable set $\mathcal C\subset
    (0,+\infty)$ such that
    the analogous identities for the \emph{left} {\emph{limits}}
    hold for every $t\in (0,+\infty)\setminus\mathcal C$.
    % \begin{equation}
%       \label{eq:Cap1:59}
%       \left\{
%       \begin{aligned}
%        & \vv_{t-}=\lim_{h\down0}\frac{\ShortMap{u_t}{u_{t-h}}-\Identity}h
%         =-\MinSelT\phi(u_t),\\
%        &  \frac{d}{dt_-}\phi(u_t)=
%         -\MetricSlopeSquare\phi{u_t}.
%       \end{aligned}
%        \right.`
%       \quad\forevery\, t\in (0,+\infty)\setminus\mathcal C.
%     \end{equation}
%   \item[Energy identity.]
%     The map $t\mapsto \phi(u_t)$ is locally absolutely
%     continuous and the energy identity holds:
%     \begin{equation}
%       \label{eq:Cap12:10bis}
%       \frac d{dt}\phi(u_t)=-\MetricSlope\phi{u_t}
%       |\dot u_t|=-\MetricSlopeSquare\phi{u_t}=-|\dot u_t|^2
%       \quad\text{for a.e.\ }t\in >0
%     \end{equation}
  \item[Asymptotic behavior]
    If $\lambda>0$, then
    $\phi$ admits a unique minimum point $\umin $
    and for every  $t\geq t_0\ge0$ we have
    \begin{subequations}
      \begin{gather}
        \label{eq:Cap1:42}
        \frac\lambda 2\DistSquare{u_t}{\umin }\le
        \phi(u_t)-\phi(\umin )\le \frac1{2\lambda}
        |\partial\phi|^2(u_t),
        % \quad\forevery\, t\ge0,
        \\
        \label{eq:Cap12:18}
        \DistSquare{u_t}{\umin }\le
        \DistSquare{u_{t_0}}{\umin } \rme^{-\lambda
          (t-t_0)},\quad
        \\
        \label{eq:Cap1:40}
        \phi(u_t)-\phi(\umin )\le
        \Big(\phi(u_{t_0})-\phi(\umin )\Big)\rme^{-2\lambda(t-t_0)},
        \quad
        \phi(u_t)-\phi(\umin )\le
        \frac{1}{2}\rme^{\lambda{{(t-t_0)}}}\DistSquare{u_{t_0}}{\umin },
        \\
        \label{eq:Cap1:41}\MetricSlope\phi{u_t}\le
        \MetricSlope\phi{u_{t_0}}\rme^{-\lambda(t-t_0)},\quad
        \MetricSlope\phi{u_t}\le
        \frac1{\rme^{\lambda {{(t-t_0)}}}\Dist{u_{t_0}}{\umin }}.
      \end{gather}
    \end{subequations}
  If $\lambda=0$ and $\umin $ is any minimum point of $\phi$ then
  we have
  \begin{equation}
    \label{eq:Cap12:19}
    \begin{gathered}
      \MetricSlope\phi{u_t}\le \frac{\DistSquare
        {u_0}{\overline u}}t,
      \quad
      \phi(u_t)-\phi(\umin )\le \frac{
        \DistSquare{u_0}{\umin }}{2t},
      \\
      \text{the map}\quad t\mapsto \DistSquare{u_t}{\umin }
      \quad
      \text{is not increasing.}
  \end{gathered}
\end{equation}
  \item[Continuity of the energy and the slope]
    If $u^n\in C^0([0,+\infty); X)$ are solutions of
    \\$\EVI X\DistName\phi\lambda$
    such that $\lim_{n\up+\infty}u^n_0=u_0$,
    then
    \begin{align}
      \label{eq:cap2:16}
      \lim_{n\up+\infty}\phi(u^n_{t})&=\phi(u_t)&&\forevery t>0,\\
      \label{eq:80}
      % \qquad
      \lim_{n\up+\infty}|\partial\phi|(u^n_{t})&=|\partial\phi|(u_t)
      &&\forevery
      %\quad
      t\in (0,+\infty)\setminus\mathcal C.
    \end{align}
  \end{description}
\end{theorem}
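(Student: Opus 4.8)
The plan is to derive every assertion from the derivative-free characterization \eqref{eq:cap1:78bis} and its pointwise consequence \eqref{eq:EVIevery}, which play the role that the chain rule and the four basic estimates of Section~1 played in the smooth case. I would begin with the $\lambda$-contraction \eqref{eq:cap1:81}, since uniqueness and most of the later continuity statements rest on it. The natural device is a doubling of variables: set $g(s,t):=\tfrac12\DistSquare{u^1_s}{u^2_t}$, apply \eqref{eq:EVIevery} to $u^1$ with the frozen test point $v=u^2_t$ and to $u^2$ with $v=u^1_s$, and add the two resulting one-sided inequalities. Since both curves are locally absolutely continuous, $g$ is locally Lipschitz, and a lemma on Dini derivatives along the diagonal lets me combine the two partial inequalities into ${\frac\d\dt}^{+}\DistSquare{u^1_t}{u^2_t}+2\lambda\DistSquare{u^1_t}{u^2_t}\le0$; Gronwall's inequality then yields \eqref{eq:cap1:81}, and uniqueness is immediate.

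Next I would address the regularizing effects. Applying the contraction just proven to the solution $u$ and to its time-shift $t\mapsto u_{t+h}$ shows that $t\mapsto \rme^{\lambda t}\Dist{u_t}{u_{t+h}}/h$ is non-increasing; letting $h\downarrow0$ gives the existence of the right metric derivative $|\dot u_{t+}|$ and the monotonicity of $t\mapsto\rme^{\lambda t}|\dot u_{t+}|$. Identifying $|\dot u_{t+}|$ with $\MetricSlope\phi{u_t}$ through the energy-dissipation equality (the metric curve-of-maximal-slope argument, using lower semicontinuity of $\phi$ and the metric chain rule) then gives \eqref{eq:Cap12:24} and \eqref{eq:cap2:3}. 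With slope monotonicity in hand, the master estimate \eqref{eq:2bis} is proven exactly as in the smooth Corollary: one sets $A(t)$ equal to its left-hand side and checks $A'(t)\le0$ by differentiating and invoking \eqref{eq:EVIevery}, the energy identity \eqref{eq:Cap12:24}, and \eqref{eq:cap2:3}, following the same pattern as the computation establishing \eqref{eq:10}. The consequences \eqref{eq:Cap12:111} and \eqref{eq:Cap1:38} follow by discarding nonnegative terms and optimizing in $v$, and the local Lipschitz continuity follows from the resulting bound on $|\dot u_{t+}|$.

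The remaining parts are then corollaries of these estimates. The short-time expansion \eqref{eq:3bis} comes from integrating the pointwise inequality \eqref{eq:EVIevery} and using $u_0\in\DomainSlope\phi$. For $\lambda>0$, the coercivity built into \eqref{eq:2bis} forces boundedness of the sublevels and hence the existence and uniqueness of $\umin$; testing the EVI for the stationary solution $t\mapsto\umin$ against $v=u_t$ gives the lower bound in \eqref{eq:Cap1:42}, while the metric analogue $\MetricSlopeSquare\phi{u_t}\ge2\lambda\big(\phi(u_t)-\phi(\umin)\big)$ of the $\lambda$-convex slope--gap inequality gives its upper bound, and \eqref{eq:Cap12:18}--\eqref{eq:Cap1:41} follow from contraction and slope monotonicity applied with reference point $\umin$; the case $\lambda=0$ in \eqref{eq:Cap12:19} is the same computation with $\sfE 0t=t$. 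Finally the continuity statements \eqref{eq:cap2:16} and \eqref{eq:80} combine the contraction $u^n_t\to u_t$ with lower semicontinuity of $\phi$ and the uniform upper bound \eqref{eq:Cap12:111} for the energy, and with the monotonicity \eqref{eq:cap2:3} together with the energy identity for the slope, the exceptional set $\mathcal C$ being the at most countable set of discontinuity points of the monotone map $t\mapsto\rme^{\lambda t}\MetricSlope\phi{u_t}$.

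The main obstacle is the contraction step, specifically the passage from the two one-sided partial inequalities for $g(s,t)$ to a single differential inequality on the diagonal: one must justify that the sum of the right Dini derivatives in $s$ and in $t$ controls ${\frac\d\dt}^{+}g(t,t)$, which relies on the joint local Lipschitz regularity of $g$ and a careful handling of the upper Dini derivative of a Lipschitz function. A secondary delicate point is the identification $|\dot u_{t+}|=\MetricSlope\phi{u_t}$ and the treatment of the left limits, where the reverse energy inequality requires the metric chain rule and where $t\mapsto\rme^{\lambda t}\MetricSlope\phi{u_t}$ may fail to be left-continuous precisely on $\mathcal C$.
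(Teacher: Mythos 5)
Your treatment of the $\lambda$-contraction \eqref{eq:cap1:81} coincides exactly with the paper's own proof, which is the only part the authors actually prove in the text: doubling of variables, the two frozen-variable inequalities, the Dini-derivative lemma \cite[Lemma 4.3.4]{Ambrosio-Gigli-Savare05} on the diagonal, and Gronwall. The remaining items are deferred by the paper to \cite{Savare10} and \cite{Ambrosio-Savare06}, so they cannot be compared line by line; judged on its own merits, your sketch identifies the right ingredients (slope monotonicity, energy identity, the master estimate \eqref{eq:2bis} proved by mimicking the smooth computation for \eqref{eq:10}), but three steps contain genuine gaps.

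First, for $\lambda>0$ you deduce the existence of the minimizer $\umin$ from ``boundedness of the sublevels''. In a complete metric space, lower semicontinuity plus bounded sublevels does \emph{not} produce a minimizer --- there is no compactness available, and avoiding compactness is precisely the stated point of the paper's framework. The correct route uses the flow itself: by \eqref{eq:cap1:81} applied to $u$ and its time-shifts, the local Lipschitz constant of $u$ decays like $\rme^{-\lambda t}$, so $u_t$ is Cauchy as $t\up+\infty$; its limit $\umin$ is then a minimizer by \eqref{eq:Cap12:111} and lower semicontinuity. Second, your lower bound in \eqref{eq:Cap1:42} is obtained by ``testing the EVI for the stationary solution $t\mapsto\umin$'', but the assertion that the constant curve at $\umin$ solves \eqref{eq:EVI} is \emph{equivalent} to the inequality $\frac\lambda2\DistSquare v\umin\le\phi(v)-\phi(\umin)$ you are trying to prove, and the theorem's hypotheses hand you only the given solutions $u,u^1,u^2$, not a flow starting from $\umin$; as written this step is circular. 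Third, the existence of the right limits \eqref{eq:Cap12:23} at \emph{every} $t$ does not follow from the monotonicity of $t\mapsto \rme^{\lambda t}\Dist{u_t}{u_{t+h}}/h$ at fixed $h$: monotonicity in $t$ says nothing about convergence as $h\down0$. One needs an extra mechanism, e.g.\ the almost-subadditivity $\Dist{u_t}{u_{t+h+k}}\le \Dist{u_t}{u_{t+h}}+\rme^{-\lambda h}\Dist{u_t}{u_{t+k}}$ (which follows from contraction) combined with a Fekete-type argument, or first establishing the convexity \eqref{eq:Cap1:58} of $t\mapsto\phi(u_t)$ and transferring right-differentiability through the a.e.\ energy identity. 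Related to this, watch the ordering of your second paragraph: you use the identification $|\dot u_{t+}|=\MetricSlope\phi{u_t}$ to prove \eqref{eq:2bis} and then \eqref{eq:2bis} to get local Lipschitz continuity, while that identification (via chain-rule/energy-dissipation arguments) already presupposes some regularity of $u$; the circle can be broken by first extracting the local H\"older bound directly from \eqref{eq:cap1:78bis} with $v=u_s$, but this needs to be said.
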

\begin{proof}[We just sketch the proof of the contraction property \eqref{eq:cap1:81}.]
  For a fixed $s\in(0,+\infty)$ we have that
  \begin{equation}\label{ts}
    \frac{\partial}{\partial
      t}\frac{1}{2}\sfd^2(u^1_t,u^2_s)+\frac{\lambda}{2}
    \sfd^2(u^1_t,u^2_s)\leq\phi(u^2_s)-\phi(u^1_t)\quad\forevery t\in(0,+\infty),
  \end{equation}
  while for a fixed $t\in(0,+\infty)$
  \begin{equation}\label{st}
    \frac{\partial}{\partial s}\frac{1}{2}\sfd^2(u^1_t,u^2_s)+\frac{\lambda}{2}\sfd^2(u^1_t,u^2_s)\leq\phi(u^1_t)-\phi(u^2_s)\quad\forevery s\in(0,+\infty).
  \end{equation}
  Adding \eqref{ts} and \eqref{st} we get
  \begin{equation*}
    \frac{\partial}{\partial t}\frac{1}{2}\sfd^2(u^1_t,u^2_s)+\frac{\partial}{\partial s}\frac{1}{2}\sfd^2(u^1_t,u^2_s)+\lambda \sfd^2(u^1_t,u^2_s)\leq0;
  \end{equation*}
  Applying \cite[Lemma 4.3.4]{Ambrosio-Gigli-Savare05} we obtain
  \begin{equation*}
    \frac{\d}{\dt}\sfd^2(u^1_t,u^2_t)\leq-2\lambda
    \sfd^2(u^1_t,u^2_t)\quad\text{$\Leb 1$-a.e.\ in }(0,+\infty)
  \end{equation*}
  and therefore we obtain \eqref{eq:cap1:81}.
  % The monotonicity property \eqref{monotone2} directly follows from the integral version of the EVI 
  % \begin{equation}
  %   \frac{e^{\lambda (t-s)}}{2}\sfd^2(u^1_t,v)-\frac{1}{2}\sfd^2(u_s,v)\leq\int_s^t e^{\lambda r}[\phi(v)-\phi(u^1_r)]\,dr
  % \end{equation}
  % written for $v=u_s$ and by the continuity of $\phi\circ u$.
\end{proof}

Theorem \ref{thm:main1} concerns each single solution to
\eqref{eq:EVI}; when the $\lambda$-gradient flow $\mtS_t$ of $\phi$
exists we have further
interesting properties, showing that the formulation by \eqref{eq:EVI}
is really {s}tronger than all the other metric approaches.
\subsection{$\lambda$-Gradient flows and $\lambda$-convexity along geodesics}

Let us first recall the notion of (minimal, constant speed) geodesics
in a metric space $X$ and the related convexity.

\begin{definition}[Constant speed geodesics] A curve $\gamma:[0,1]\freccia X$ is a \emph{constant speed geodesic} {(or simply \emph{geodesic})} if
\begin{equation}\label{geod}
 {\Dist{\gamma_s}{\gamma_t}}=|t-s|\,{\Dist{\gamma_0}{\gamma_1}}\quad\forevery 0\leq s\leq t\leq1.
\end{equation}
A set $D\subset X$ is geodesically convex if every couple of points
$x_0,x_1\in D$ can be connected by a geodesic $\gamma$ contained in $D$.
% with
% $\gamma_0=x_0,\ \gamma_1=x_1$. 
\end{definition}

\begin{definition}[$\lambda$-convexity along curves and geodesically
  $\lambda$-convex functionals]
  We say that $\phi:X\freccia(-\infty,+\infty]$ is
  \emph{$\lambda$-convex} along a curve $\gamma:[0,1]\to X$
  if
  \begin{equation}\label{lambdaconv}
    \phi(\gamma_s)\leq (1-s)\phi(\gamma_0)+s\phi(\gamma_1)-\frac{\lambda}{2}s(1-s)\sfd^2(\gamma_0,\gamma_1)\quad\forevery s\in[0,1].
  \end{equation}
  We say that $\phi$ is \emph{geodesically $\lambda$-convex}
  if every couple of point{s}
  $x_0,x_1\in D(\phi)$ can be connected by a geodesic $\gamma$ along
  which $\phi$ is $\lambda$-convex.
  % \begin{equation}
%  \phi(\gamma_s)\leq (1-s)\phi(\gamma_0)+s\phi(\gamma_1)\quad\forevery s\in[0,1].
% \end{equation}
% More generally, we say that $\phi$ is \emph{$\lambda$-convex} on $\gamma$ for some $\lambda\in\R$ if
  If $\phi$ is geodesically convex and it is $\lambda$-convex along
  \emph{every} geodesic connecting $x_0,x_1\in D(\phi)$ in
  $\overline{D(\phi)}$ then we say that $\phi$ is \emph{strongly}
  geodesically $\lambda$-convex.
  % If there exists $\lambda\in\R$ such that $\forevery v_0,v_1\in D(\phi)$ there exists a constant speed geodesic $\gamma:[0,1]\freccia X$ such that $\gamma_0=v_0$, $\gamma_1=v_1$ and $\phi$ is $\lambda$-convex along $\gamma$, we say that $\phi$ is \emph{$\lambda$-geodesically convex}. 
\end{definition}

\begin{theorem}[\cite{Daneri-Savare08}]
  \label{eviconv}
  If the $\lambda$-gradient flow $\mtS_t$ of $\phi$ exists then $\phi$
  is $\lambda$-convex along any geodesic in $\overline{D(\phi)}$.
  In particular, if $\overline{D(\phi)}$ is geodesically convex, then
  $\phi$ is strongly geodesically $\lambda$-convex.
\end{theorem}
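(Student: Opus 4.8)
The plan is to adapt the finite-dimensional argument of Proposition \ref{eviconv1} to the purely metric setting, replacing the linear segment $u^s=(1-s)u^0+su^1$ by a geodesic interpolation. Fix a geodesic $\gamma:[0,1]\to\overline{D(\phi)}$ connecting $\gamma_0,\gamma_1\in\overline{D(\phi)}$; I want to prove the $\lambda$-convexity inequality \eqref{lambdaconv} along $\gamma$. The key idea is to flow each interpolation point $\gamma_s$ by the semigroup, setting $u^s_t:=\Flow t{\gamma_s}$, and to use the pointwise \EVIname\ inequality \eqref{eq:EVIevery} to compare $\phi(\gamma_s)$ both with $\phi(\gamma_0)$ and with $\phi(\gamma_1)$ through the infinitesimal motion of the squared distance at $t=0$.

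The concrete steps are as follows. First I would record, for each fixed $s$ and each test point $v\in D(\phi)$, the right-differential \EVIname\ inequality \eqref{eq:EVIevery} applied to the curve $t\mapsto u^s_t$ starting at $\gamma_s$, which gives a one-sided control of $\frac12\,{\frac\d\dt}^+\DistSquare{u^s_t}{v}\big|_{t=0}$ in terms of $\phi(v)-\phi(\gamma_s)$. Evaluating this once with $v=\gamma_1$ and once with $v=\gamma_0$ yields two estimates bounding $\phi(\gamma_1)-\phi(\gamma_s)$ from above and $\phi(\gamma_0)-\phi(\gamma_s)$ from above by the corresponding initial velocities of the squared distances plus the $\lambda$-terms. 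Next I would form the convex combination $(1-s)\,[\text{inequality with }v=\gamma_0]+s\,[\text{inequality with }v=\gamma_1]$, so that the right-hand side produces exactly $(1-s)\phi(\gamma_0)+s\phi(\gamma_1)-\phi(\gamma_s)$ together with a weighted sum of distance-derivatives and the term $-\frac\lambda2\big((1-s)\DistSquare{\gamma_s}{\gamma_0}+s\DistSquare{\gamma_s}{\gamma_1}\big)$. Using the geodesic property \eqref{geod}, namely $\Dist{\gamma_s}{\gamma_0}=s\,\Dist{\gamma_0}{\gamma_1}$ and $\Dist{\gamma_s}{\gamma_1}=(1-s)\Dist{\gamma_0}{\gamma_1}$, this last term collapses to precisely $-\frac\lambda2 s(1-s)\DistSquare{\gamma_0}{\gamma_1}$, which is the term appearing in \eqref{lambdaconv}.

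It then remains to show that the weighted combination of the initial velocities of the squared distances is nonpositive, so that the remaining terms disappear from the upper bound and the desired inequality \eqref{lambdaconv} survives. This is the crux of the argument: one must verify that
\[
  (1-s)\,{\frac\d\dt}^+\tfrac12\DistSquare{u^s_t}{\gamma_0}\Big|_{t=0}+s\,{\frac\d\dt}^+\tfrac12\DistSquare{u^s_t}{\gamma_1}\Big|_{t=0}\ge 0,
\]
or rather that the corresponding combination pointing the inequality the right way closes the estimate. I expect this to be the main obstacle, because in the metric setting there is no inner-product structure to turn the two one-sided derivatives into a single telescoping quantity as happened in the Euclidean proof of Proposition \ref{eviconv1}. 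The natural resolution is a first-variation/semiconcavity estimate for $s\mapsto\DistSquare{\gamma_s}{\gamma_0}$ and $s\mapsto\DistSquare{\gamma_s}{\gamma_1}$ along the geodesic, combined with the fact that the initial velocity of the flow at $\gamma_s$ is the metric slope direction; this is exactly the content of \cite{Daneri-Savare08}, whose geometric lemmas on the interaction between geodesics and the squared distance provide the missing sign and allow one to pass from the two separate \EVIname\ inequalities to the single convexity inequality \eqref{lambdaconv}. The final assertion about strong geodesic $\lambda$-convexity is then immediate: if $\overline{D(\phi)}$ is geodesically convex, every pair $x_0,x_1\in D(\phi)$ is joined by a geodesic in $\overline{D(\phi)}$ and the argument applies to \emph{every} such geodesic, giving $\lambda$-convexity along all of them.
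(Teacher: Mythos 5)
Your skeleton is the same as the paper's: flow each point $\gamma_s$ by the semigroup, test the evolution variational inequality against both endpoints $\gamma_0,\gamma_1$, take the convex combination with weights $(1-s)$ and $s$, and use the geodesic identity to collapse the $\lambda$-terms to $-\frac\lambda2 s(1-s)\sfd^2(\gamma_0,\gamma_1)$. However, the step you yourself identify as the crux — the sign of
\[
  (1-s)\,{\frac{\d}{\dt}}^{+}\tfrac12\DistSquare{u^s_t}{\gamma_0}\Big|_{t=0}
  +s\,{\frac{\d}{\dt}}^{+}\tfrac12\DistSquare{u^s_t}{\gamma_1}\Big|_{t=0}\ \ge\ 0
\]
— is left unproved, and the mechanism you guess for it (a first-variation/semiconcavity estimate for the squared distance, plus identifying the initial velocity of the flow with the slope direction) is not what the reference uses and is not needed. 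Semiconcavity of $\sfd^2$ and the local angle condition enter these notes only in the generation result (Theorem \ref{genteofinal}), not here. The missing observation is elementary and purely metric: by the inequality $(1-s)a^2+sb^2\ge s(1-s)(a+b)^2$ together with the triangle inequality,
\[
  (1-s)\DistSquare x{\gamma_0}+s\DistSquare x{\gamma_1}\ \ge\
  s(1-s)\bigl(\Dist x{\gamma_0}+\Dist x{\gamma_1}\bigr)^2\ \ge\
  s(1-s)\DistSquare{\gamma_0}{\gamma_1}\qquad\forevery x\in X,
\]
while \eqref{geod} gives equality at $x=\gamma_s$. Thus $\gamma_s$ is a \emph{global minimizer} of $x\mapsto(1-s)\DistSquare x{\gamma_0}+s\DistSquare x{\gamma_1}$, so this quantity cannot decrease along $t\mapsto u^s_t$ starting from $t=0$; nonnegativity of the right Dini derivative of the combination (hence, by subadditivity of $\limsup$, the lower bound you need on the weighted sum) follows at once, with no inner-product structure and no geometric lemmas.

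There is a second gap: you apply the pointwise inequality \eqref{eq:EVIevery} at $t=0$, but it is stated (and only available) for $t>0$; worse, its right-hand side at $t=0$ would contain $\phi(\gamma_s)$, which a priori may be $+\infty$, since only the endpoints are assumed to lie in $D(\phi)$ — finiteness of $\phi(\gamma_s)$ is part of what must be proved. The paper circumvents both problems by using the integrated characterization \eqref{eq:cap1:78bis} on the interval $[0,t]$: there the right-hand side involves $\phi(\gamma^s_t)$ at time $t>0$, which is finite by the regularizing effect of the flow. One then inserts the two displayed inequalities above, divides by $\sfE\lambda t$, lets $t\downarrow0$, and uses the lower semicontinuity of $\phi$ to bound $\liminf_{t\downarrow0}\phi(\gamma^s_t)$ from below by $\phi(\gamma_s)$. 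With these two corrections — the global-minimality observation in place of the appeal to semiconcavity, and the integrated EVI in place of the pointwise one at $t=0$ — your argument closes and coincides with the paper's proof.
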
 
\begin{proof} Let $\gamma:s\in [0,1]\mapsto\gamma^s\in \overline{D(\phi)}$ be a geodesic
    with $\gamma^0,\gamma^1\in D(\phi)$ and let us set $\gamma^s_t:=\mtS_t(\gamma^s)$.

    Applying \eqref{eq:cap1:78bis} we have for every $s\in [0,1]$ and $t>0$
    % Due to the monotonicity property of $\phi$ along the solutions of \eqref{evi}, if we write the EVI both for the points $\gamma=\gamma_0$, $u=\gamma^s$ and $\gamma=\gamma_1$, $u=\gamma^s$, multiply the so obtained inequalities by $e^{\lambda t}$ and integrate from $0$ to $t$, we obtain
    \begin{align}
      &\frac{1}{2}\rme^{\lambda t
      }\sfd^2(\gamma^s_t,\gamma^0)-\frac{1}{2}\sfd^2(\gamma^s,\gamma^0)\leq
      \mtE_{\lambda}(t)\bigl(\phi(\gamma^0)-\phi(\gamma^s_t)\bigr),\label{intevi0}\\
      &\frac{1}{2}\rme^{\lambda t
      }\sfd^2(\gamma^s_t,\gamma^1)-\frac{1}{2}\sfd^2(\gamma^s,\gamma^1)\leq
      \mtE_{\lambda}(t)\bigl(\phi(\gamma^1)-\phi(\gamma^s_t)\bigr){.}\label{intevi1}
    \end{align} 
    Multiplying \eqref{intevi0} by $(1-s)$ and \eqref{intevi1} by $s$ and adding the two inequalities we get
    \begin{align}\label{sumineq}
      \frac{{\rme}^{\lambda t}}{2}\bigl((1-s)\sfd^2(\gamma^s_t,\gamma^0)+s \sfd^2(\gamma^s_t,\gamma^1)\bigr) &-\frac{1}{2}\bigl((1-s)\sfd^2(\gamma^s,\gamma^0)+s\sfd^2(\gamma^s,\gamma^1)\bigr)\notag\\
      &\leq\mtE_{\lambda}(t)\bigl((1-s)\phi(\gamma^0)+s\phi(\gamma^1)-\phi(\gamma^s_t)\bigr).
    \end{align}
    We now observe that the elementary inequality 
    \begin{align}\label{elineq}
      (1-s)a^2+sb^2\geq s(1-s)(a+b)^2\quad\forevery a,b\in\R,\quad s\in[0,1],
    \end{align}
    and the triangular inequality yield
    \begin{align}\label{3.15}
      (1-s)\sfd^2(\gamma^s_t,\gamma^0)+s \sfd^2(\gamma^s_t,\gamma^1)&\overset{\eqref{elineq}}{\geq} s (1-s)\bigl({\Dist{\gamma^s_t}{\gamma^0}}+{\Dist{\gamma_t^s}{\gamma^1}}\bigr)^2\notag\\
      &\geq s(1-s) \sfd^2(\gamma^0,\gamma^1).
    \end{align}
    On the other hand, since $\gamma$ is a geodesic we have
    \begin{equation}\label{segmeq}
      (1-s)\sfd^2(\gamma^s,\gamma^0)+s\sfd^2(\gamma^s,\gamma^1)=s(1-s)\sfd^2(\gamma^0,\gamma^1).
    \end{equation}
    Inserting \eqref{3.15} and \eqref{segmeq} in \eqref{sumineq} we get
    \begin{align}\label{3.17}
      \frac{{\rme}^{\lambda t}-1}{2}s(1-s)\sfd^2(\gamma^0,\gamma^1)\leq\mtE_{\lambda}(t)\bigl((1-s)\phi(\gamma^0)+s\phi(\gamma^1)-\phi(\gamma^s_t)\bigr).
    \end{align}
    Dividing then both sides of \eqref{3.17} by $\mtE_{\lambda}(t)$ and passing to the limit as $t\downarrow0$ we obtain
    \begin{equation*}
      \phi(\gamma^s)\leq(1-s)\phi(\gamma^0)+s\phi(\gamma^1)-\frac{\lambda}{2}s(1-s)\sfd^2(\gamma^0,\gamma^1)\quad\forevery s\in[0,1].
    \end{equation*} 
  \end{proof}
  
\subsection{$\lambda$-gradient flows and curves of maximal slope}
\begin{definition}[Curves of maximal slope]
  \label{def:maxslope}
  We say that a curve $u\in
  AC^2_{\mathrm{loc}}(0,+\infty;X)$ is a curve of maximal slope for the
  functional $\phi$ if % $\phi\circ u$ belongs to
%   $AC_{\mathrm{loc}}(0,+\infty;X)$ and 
  the energy dissipation inequality 
\begin{equation}\label{cmaxs}
 \frac{1}{2}\int_s^t|\u|^2(r)\,dr+\frac{1}{2}\int_s^t|\partial\phi|^2(u_r)\,dr\le
 \phi(u_s)-\phi(u_t)
\end{equation}
holds for all $0<s\leq t<+\infty$.
\end{definition}
The notion of curve of maximal slope has been first introduced (in a slightly different form) by De Giorgi and
provides a weak notion of gradient flow for nonsmooth functionals, also nonconvex.
If $\phi$ admits a $\lambda$-gradient flow according to {D}efinition \ref{def:GFlow}, then these two definitions
coincide.
\begin{theorem}
  \label{thm:MaxSlopeAreGFlows}
  Let us assume that
  the $\lambda$-gradient flow $\FlowName$ of $\phi$ exists
  and let
  $u\in AC^2_{\rm loc}(0,+\infty;X)$ be satisfying
  \eqref{cmaxs} with $\lim_{t\downarrow0}u_t=u_0\in D(\phi)$.
  Then $u_t=\Flow t{u_0}$ for every $t\ge0$ and \eqref{cmaxs} is in
  fact an identity for every $0\le s<t<+\infty$.
\end{theorem}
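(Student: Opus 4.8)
The plan is to prove the theorem in three stages: first extract the sharp pointwise dissipation information hidden in \eqref{cmaxs}; then upgrade the \emph{scalar} energy dissipation inequality into the full \emph{vectorial} inequality \eqref{eq:EVI}, i.e.\ show that $u$ is itself a solution of $\EVIshort\lambda$; and finally invoke the contraction and uniqueness estimate \eqref{eq:cap1:81} to identify $u$ with the gradient flow, upgrading \eqref{cmaxs} to an identity at the end.

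For the first stage I would combine \eqref{cmaxs} with the chain-rule estimate $\big|\frac{\d}{\d r}\phi(u_r)\big|\le |\dot u_r|\,|\partial\phi|(u_r)$ (valid $\Leb 1$-a.e.\ along an absolutely continuous curve, $|\partial\phi|$ being a strong upper gradient for a $\lambda$-convex functional) together with Young's inequality. This forces, for a.e.\ $r$, the identities $|\dot u_r|=|\partial\phi|(u_r)$ and $\frac{\d}{\d r}\phi(u_r)=-|\partial\phi|^2(u_r)$; in particular $u_r\in D(|\partial\phi|)$ for a.e.\ $r$, and \eqref{cmaxs} holds with equality whenever the chain rule does.

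The heart of the matter is the second stage: establishing, for a.e.\ $s$ and every $v\in D(\phi)$,
\[
\frac12\frac{\d}{\d s}\DistSquare{u_s}v+\frac\lambda2\DistSquare{u_s}v\le \phi(v)-\phi(u_s).
\]
The key structural input is Theorem \ref{eviconv}: since the $\lambda$-gradient flow of $\phi$ exists, $\phi$ is $\lambda$-convex along geodesics of $\overline{D(\phi)}$. Joining $u_s$ to $v$ by such a geodesic $\gamma$ and combining \eqref{lambdaconv} with the definition of the metric slope yields the metric ``subdifferential'' bound $|\partial\phi|(u_s)\,\Dist{u_s}v\ge \phi(u_s)-\phi(v)+\frac\lambda2\DistSquare{u_s}v$. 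The real work is to feed in the identity $|\dot u_s|=|\partial\phi|(u_s)$ so that the motion of $u$ \emph{towards} $v$ is dominated by the descent of $\phi$. I expect this directional step to be the main obstacle: unlike in $\R^d$, there is no scalar product producing the cancellation between a term $\la \dot u_s,u_s-v\ra$ and the descent term, so the argument must be purely metric. I would carry it out by the geodesic-interpolation technique underlying Theorem \ref{eviconv} (cf.\ \cite{Daneri-Savare08,Ambrosio-Gigli-Savare05}), passing to the limit in the difference quotients of $s\mapsto\frac12\DistSquare{u_s}v$, and being mindful of the places where test points $v$ can actually be joined to $u_s$ by geodesics in $\overline{D(\phi)}$.

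Once $u$ is known to solve $\EVIshort\lambda$ with $\lim_{t\downarrow0}u_t=u_0$, the conclusion is immediate. Applying the contraction estimate \eqref{eq:cap1:81} to the two $\EVIshort\lambda$ solutions $u$ and $t\mapsto\Flow t{u_0}$, which share the initial datum $u_0$, gives $u_t=\Flow t{u_0}$ for every $t\ge0$; equivalently, one may feed the two $\EVIshort\lambda$ inequalities into the doubling \cite[Lemma 4.3.4]{Ambrosio-Gigli-Savare05}, exactly as in the proof of \eqref{eq:cap1:81}. Finally, being a gradient flow, $t\mapsto\Flow t{u_0}$ satisfies the pointwise energy identity \eqref{eq:Cap12:24}, whose integration turns \eqref{cmaxs} into the equality
\[
\phi(u_s)-\phi(u_t)=\frac12\int_s^t\big(|\dot u_r|^2+|\partial\phi|^2(u_r)\big)\,\d r\qquad\text{for every }0\le s<t<+\infty,
\]
completing the proof.
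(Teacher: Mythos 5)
The paper states Theorem \ref{thm:MaxSlopeAreGFlows} without proof (it is quoted from \cite{Savare10}), so your proposal has to stand on its own; it does not. Stages 1 and 3 are sound in outline (though in Stage 1 the strong-upper-gradient property of $|\partial\phi|$ should be derived directly from the existence of the flow rather than from ``$\lambda$-convexity'', since the theorem never assumes that geodesics between arbitrary points exist). The genuine gap is Stage 2, which is where the whole difficulty of the theorem sits: you do not prove it, and the tools you reserve for it cannot prove it. In Stage 2 the hypothesis that $\FlowName$ exists enters only through Theorem \ref{eviconv}, i.e.\ only through $\lambda$-convexity of $\phi$ along geodesics; the remaining ingredients are the slope bound and the Stage-1 identities $|\dot u|(r)=|\partial\phi|(u_r)$, $\frac{\d}{\d r}\phi(u_r)=-|\partial\phi|^2(u_r)$. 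These data do not record the \emph{direction} in which $u$ moves, so no pointwise argument (geodesic interpolation between $u_s$ and $v$, difference quotients of $s\mapsto\frac12\DistSquare{u_s}{v}$) can yield \eqref{eq:EVI}. Concretely: take $X=\R^2$ with the $\ell^p$-norm, $1<p<\infty$, $p\neq2$, and $\phi(x)=\langle\xi,x\rangle$ linear with dual norm one. Then $\phi$ is $0$-convex along every geodesic (segments), $|\partial\phi|\equiv1$, your subdifferential bound holds, and the curve $u_t=u_0+t\,w^*$ (with $w^*$ the unit vector satisfying $\langle\xi,w^*\rangle=-1$) is a curve of maximal slope satisfying every Stage-1 identity with equality; nevertheless it is \emph{not} a solution of \eqref{eq:EVI} with $\lambda=0$, because testing against all $v$ forces $z\mapsto\langle j(z),w^*\rangle$ ($j$ the duality map of the norm) to be linear, which happens only for $p=2$ --- compare the paper's own remark that a Banach space satisfies (LAC) iff it is a Hilbert space. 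In this example $\FlowName$ of course does not exist, so the theorem is not contradicted; the point is that your Stage-2 argument never uses $\FlowName$ beyond Theorem \ref{eviconv}, hence it would run verbatim there and prove a false statement. The obstacle you flag is therefore not a technical step to be ``carried out by the geodesic-interpolation technique''; the approach itself must change.

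A workable argument brings the semigroup into the comparison. For a.e.\ $r>0$ (a point where $u$ is metrically differentiable, $\phi\circ u$ is differentiable and the Stage-1 identities hold, with $m:=|\partial\phi|(u_r)=|\dot u|(r)$), set $z_\sigma:=\Flow\sigma{u_r}$ and integrate the EVI for $z$ tested at $v=u_{r+h}$ over $\sigma\in(0,h)$. By the energy identity and the right continuity of the slope along the flow (Theorem \ref{thm:main1}) one gets $\int_0^h\bigl(\phi(z_\sigma)-\phi(u_{r+h})\bigr)\,\d\sigma=\frac12m^2h^2+o(h^2)$, while $\frac12\DistSquare{u_r}{u_{r+h}}=\frac12m^2h^2+o(h^2)$; these two terms cancel and one is left with $\frac12\DistSquare{z_h}{u_{r+h}}\le o(h^2)$, i.e.\ $\Dist{u_{r+h}}{\Flow h{u_r}}=o(h)$. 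This is the metric substitute for the scalar-product cancellation you identified as missing, and it is unobtainable without $\FlowName$. Then $r\mapsto\Dist{\Flow{t-r}{u_r}}{u_t}$ is locally absolutely continuous on $(0,t]$ with a.e.\ derivative bounded by $\rme^{-\lambda(t-r)}\limsup_{h\down0}h^{-1}\Dist{u_{r+h}}{\Flow h{u_r}}=0$, thanks to the contraction \eqref{eq:cap1:81}; hence it is constant, so $u_t=\Flow{t-s}{u_s}$, and letting $s\down0$ gives $u_t=\Flow t{u_0}$. Your Stage 3 then upgrades \eqref{cmaxs} to an identity via \eqref{eq:Cap12:24}; note that to include $s=0$ one also needs $\lim_{t\down0}\phi(u_t)=\phi(u_0)$, which follows from \eqref{eq:Cap12:111} with $v=u_0$ --- a point your write-up likewise leaves implicit.
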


\subsection{$\lambda$-gradient flows and the minimizing movement{s}
  variational scheme}
A general variational method to approximate gradient flows (and often to prove their existence) is 
provided by the so-called
\emph{minimizing movements} variational scheme.
In his original formulation (see e.g.\ \cite{DeGiorgi93}), the method consists in finding
a discrete approximation $\Pc U\tau$ of the continuous
gradient flow $u$ by solving a recursive
variational scheme, which is the natural generalization of
\eqref{eq:21}
to a metric-space setting.
If $\tau>0$ denotes the step size of the uniform partition $\cP_\tau$
\eqref{eq:19},
starting from a suitable approximation $U^0_\tau$ of $u_0$ one looks
at each step $((n-1)\tau,n\tau]$ for the minimizers of the functional
\begin{equation}
  \label{eq:cap4:30}
  U\mapsto \Phi(\tau,U^{n-1}_\tau;U):=\frac 1{2\tau}\DistSquare{U}{U^{n-1}_\tau}+\phi(U).
\end{equation}
$\Pc U\tau$ thus takes a value
$U^n_\tau\in \argmin \Phi(\tau,U^{n-1};\cdot)$ on each interval $((n-1)\tau,n\tau]$.
\begin{definition}[The minimizing movement variational scheme]
  \label{def:MMS}
  Let us consider a time step $\tau>0$
  and a discrete initial datum $U^0_\tau\in \Dom\phi $.
  A $\tau$-discrete minimizing movement starting from $U^0_\tau$
  is any sequence $(U^n_\tau)_{n\in \N}$ in $\Dom\phi$ which satisf{ies}
   \begin{equation}
     \label{eq:74}
     \Phi(\tau,U^{n-1}_\tau;U^{n}_\tau)\le
     \Phi(\tau,U^{n-1}_\tau;V)\quad \forevery V\in X,\ n\in \N.
   \end{equation}
   A discrete solution $\Pc U\tau$ is any piec{e}wise constant
   interpolant of a $\tau$-discrete minimizing movement on the grid
   $\cP_\tau$
   defined by
   \begin{equation}
     \label{eq:55}
     \overline{U}_{\ttau}(0)=U^0_{\ttau},\quad \overline{U}_{\ttau}(t)\equiv\unt\quad\text{if $t\in(t^{n-1}_{\ttau},t^n_{\ttau}]$, $n\geq1$.}
   \end{equation}
\end{definition}
The existence of a minimizing sequence $\{U^n_\tau\}_{n\in\N}$
is usually obtained by invoking the direct method of the
Calculus of Variations,
thus requiring that the functional
\eqref{eq:cap4:30}
has compact
sublevels with respect to some
Hausdorff topology $\sigma$ on $X$
(see e.g.\ the setting of \cite[\S\, 2.1]{Ambrosio-Gigli-Savare05}).
In the next section we will discuss another possibility, still considered in
\cite{Ambrosio-Gigli-Savare05},
when the functional
\eqref{eq:cap4:30} satisfies a strong convexity assumption.

In a general setting it is also possible to avoid these restrictions by applying
the Ekeland's Variational Principle to
the functional \eqref{eq:cap4:30}:
this approach only requires the completeness of the metric space.
% We are assuming that the functional $\phi$ has \emph{complete}
% sublevels and it is quadratically bounded from below, according to \eqref{eq:cap1:13}.
\begin{definition}[A relaxed minimizing movement variational scheme]
  \label{def:rMMS}
  Let us consider a time step $\tau>0$, a relaxation parameter $\eta\ge0$,
  and a discrete initial datum $U^0_\taueta\in \Dom\phi $.
  A $(\tau,\eta)$-discrete minimizing movement starting from $U^0_\taueta$
  is any sequence $(U^n_\taueta)_{n\in \N}$ in $\Dom\phi$ which satisf{ies}
%   \begin{equation}
%     \label{eq:74}
%     U^n_\taueta\in \Resolvent\TauEta {U^{n-1}_\taueta}\quad
%     \text{for every }n\in \N,
%   \end{equation}
%   i.e.\ $U^n_\taueta$ is a solution of
%   the family of variational problems
  \begin{subequations}
    \label{problemb}
    \begin{equation}
      \label{eq:cap1:67a}
      \begin{aligned}
        \Phi(\tau,U^{n-1}_\taueta;U^{n}_\taueta)\le
        \Phi(\tau,U^{n-1}_\taueta;V) +%        V{U^{n-1}_\taueta}+\phi(V)\\&\qquad+
        \frac \eta2\,\Dist{U^n_\taueta}{U^{n-1}_\taueta}\,\Dist
        V{U^n_\taueta}\quad \text{for every }V\in \Dom\phi ,
      \end{aligned}
    \end{equation}
    and the further condition
    \begin{equation}
      \label{eq:cap4:22a}
      \Phi(\tau,U^{n-1}_\taueta;U^n_\taueta)=\frac1{2\tau}\DistSquare{U^{n}_\taueta}{U^{n-1}_\taueta}+
      \phi(U^n_\taueta)\le
      \phi(U^{n-1}_\taueta),
    \end{equation}
  \end{subequations}
  for every $n\in \N$.
  A $(\tau,\eta)$-discrete solution $\Pc U\taueta$ is any piecewise
  constant interpolant of a $(\tau,\eta)$-discrete minimizing movement
  on the grid $\cP_\tau$, as in \eqref{eq:55}.
  % We denote by $\discrMM\taueta{U^0_\taueta}\subset X^\N$ the corresponding
%   collection of  all the discrete minimizing movements.
\end{definition}
Notice that when $\eta=0$ a solution to \eqref{eq:cap1:67a} is a
minimizer of \eqref{eq:cap4:30} (and in particular satisfies
\eqref{eq:cap4:22a}), so that the usual discrete solutions arising
from the minimizing movement scheme are included in this more general
relaxed framework.
The next result \cite{Savare10}, which follows directly from Ekeland's
variational principle, shows that the previous scheme
is always solvable when $\eta>0$.
\begin{theorem}
  \label{thm:RMMSexist}
  Let us assume that $X$ is complete and $\phi$ is quadratically
  bounded from below, i.e.\ for some $\kappa_o,\phi_o\in \R${, $o\in X$} 
  \begin{equation}
    \label{eq:30a}
    \phi(x)+\frac{\kappa_o}2\sfd^2(x,o)\ge \phi_o\quad\forevery x\in X.
  \end{equation}
  Then for every $\eta>0$, $\tau>0$ with $\tau^{-1}>-\kappa_0$, and $U^0_\taueta\in
  D(\phi)$, the relaxed minim{i}zing movement scheme admits at least a
  $(\tau,\eta)$-discrete solution $(U^n_\taueta)_{n\in \N}$.
\end{theorem}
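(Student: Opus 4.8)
The plan is to build the whole sequence $(U^n_\taueta)_{n\in\N}$ one point at a time, the only nonelementary ingredient being Ekeland's variational principle. Since $U^0_\taueta\in\Dom\phi$ is prescribed, it suffices to prove the inductive step: given $U^{n-1}_\taueta\in\Dom\phi$, produce $U^n_\taueta\in\Dom\phi$ satisfying \eqref{eq:cap1:67a} and \eqref{eq:cap4:22a}. Throughout one step write $F:=\Phi(\tau,U^{n-1}_\taueta;\cdot)$, so that by \eqref{eq:cap4:30} one has $F(V)=\frac1{2\tau}\sfd^2(V,U^{n-1}_\taueta)+\phi(V)$. This $F$ is proper (indeed $F(U^{n-1}_\taueta)=\phi(U^{n-1}_\taueta)<+\infty$) and lower semicontinuous, being the sum of the continuous map $V\mapsto\frac1{2\tau}\sfd^2(V,U^{n-1}_\taueta)$ and the l.s.c.\ functional $\phi$. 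Combining \eqref{eq:30a} with the triangle inequality $\sfd(V,o)\le\sfd(V,U^{n-1}_\taueta)+\sfd(U^{n-1}_\taueta,o)$ and the assumption $\tau^{-1}>-\kappa_o$, one checks by a routine estimate that $F$ is coercive; in particular $m:=\inf_X F>-\infty$.

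I will use Ekeland's variational principle in the form: if $F$ is proper, l.s.c.\ and bounded from below on the complete space $(X,\sfd)$, then for every $\sigma>0$ and every $\bar u$ with $F(\bar u)\le m+\sigma^2$ there is a point $u_\sigma$ with
\[
  F(u_\sigma)\le F(\bar u),\qquad \sfd(u_\sigma,\bar u)\le\sigma,\qquad
  F(u_\sigma)\le F(V)+\sigma\,\sfd(V,u_\sigma)\quad\forevery V\in X.
\]
If $U^{n-1}_\taueta$ already minimizes $F$ I simply set $U^n_\taueta:=U^{n-1}_\taueta$, so that \eqref{eq:cap4:22a} reduces to $\phi(U^{n-1}_\taueta)\le\phi(U^{n-1}_\taueta)$ and \eqref{eq:cap1:67a} holds trivially because $\sfd(U^n_\taueta,U^{n-1}_\taueta)=0$. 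Otherwise $\eps:=\phi(U^{n-1}_\taueta)-m=F(U^{n-1}_\taueta)-m>0$; I fix $\sigma>0$ (to be specified below) with $\sigma^2\le\eps$, choose $\bar u$ with $F(\bar u)\le m+\sigma^2$, and let $U^n_\taueta:=u_\sigma$ be the point given by Ekeland's principle. Then $F(U^n_\taueta)\le m+\sigma^2\le m+\eps=\phi(U^{n-1}_\taueta)$, and since $F(U^n_\taueta)=\frac1{2\tau}\sfd^2(U^n_\taueta,U^{n-1}_\taueta)+\phi(U^n_\taueta)$ this is exactly \eqref{eq:cap4:22a}; in particular $U^n_\taueta\in\Dom\phi$, so the recursion is well posed.

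It remains to deduce \eqref{eq:cap1:67a}, and here lies the only real difficulty: Ekeland yields the slope bound $F(U^n_\taueta)\le F(V)+\sigma\,\sfd(V,U^n_\taueta)$ with a \emph{prescribed} constant $\sigma$, whereas \eqref{eq:cap1:67a} requires the constant to be controlled by $\frac\eta2\,\sfd(U^n_\taueta,U^{n-1}_\taueta)$, a quantity depending on the output one has not yet constructed. The observation that breaks this circularity is that a sufficiently good approximate minimizer of $F$ must stay away from $U^{n-1}_\taueta$: since $F$ is l.s.c.\ and $F(U^{n-1}_\taueta)=m+\eps$, the open set $\{F>m+\eps/2\}$ contains $U^{n-1}_\taueta$, hence a ball, so there is $r_0>0$ with $\sfd(V,U^{n-1}_\taueta)<r_0\Rightarrow F(V)>m+\eps/2$; equivalently, every $V$ with $F(V)\le m+\eps/2$ satisfies $\sfd(V,U^{n-1}_\taueta)\ge r_0$. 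Choosing now
\[
  \sigma:=\min\Big\{\sqrt{\eps/2},\ \tfrac\eta2\,r_0\Big\}>0,
\]
one gets $F(U^n_\taueta)\le m+\sigma^2\le m+\eps/2$, whence $\sfd(U^n_\taueta,U^{n-1}_\taueta)\ge r_0$ and therefore $\sigma\le\frac\eta2 r_0\le\frac\eta2\,\sfd(U^n_\taueta,U^{n-1}_\taueta)$. Inserting this into the slope bound produces precisely \eqref{eq:cap1:67a}, which completes the inductive step and hence the construction. The heart of the argument is thus the interplay between the quantitative near-minimality $F(u_\sigma)\le m+\sigma^2$ supplied by Ekeland's principle and the lower semicontinuity of $F$ at $U^{n-1}_\taueta$, which together force the displacement $\sfd(U^n_\taueta,U^{n-1}_\taueta)$ to be large enough to absorb the perturbation constant $\sigma$.
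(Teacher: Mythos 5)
Your proposal is correct, and it follows precisely the route the paper indicates: the paper itself contains no proof of Theorem \ref{thm:RMMSexist}, deferring to \cite{Savare10} with the sole remark that the result ``follows directly from Ekeland's variational principle'', which is exactly the tool you use. Writing, as you do, $F:=\Phi(\tau,U^{n-1}_\taueta;\cdot)$ and $m:=\inf_X F$, the genuinely delicate point is the one you isolate: Ekeland's principle yields a slope bound with a \emph{prescribed} constant $\sigma$, while \eqref{eq:cap1:67a} needs that constant dominated by $\frac\eta2\,\sfd(U^n_\taueta,U^{n-1}_\taueta)$, which depends on the point being constructed. Your resolution is sound: if $U^{n-1}_\taueta$ is not a minimizer of $F$, then $\eps:=F(U^{n-1}_\taueta)-m>0$, lower semicontinuity makes the superlevel set $\{F>m+\eps/2\}$ open around $U^{n-1}_\taueta$, hence every point with $F$-value $\le m+\eps/2$ (in particular the Ekeland output, once $\sigma^2\le \eps/2$) lies at distance at least $r_0>0$ from $U^{n-1}_\taueta$, and taking $\sigma\le\frac\eta2 r_0$ closes the loop; the case where $U^{n-1}_\taueta$ minimizes $F$ is handled separately and trivially. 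Both \eqref{eq:cap1:67a} and \eqref{eq:cap4:22a} follow, and the recursion is well posed since $F(U^n_\taueta)<+\infty$ forces $U^n_\taueta\in\Dom\phi$.

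One caveat, which concerns the paper's statement rather than your argument: the ``routine estimate'' giving $m>-\infty$ does \emph{not} follow from \eqref{eq:30a} combined with the literal condition $\tau^{-1}>-\kappa_o$; under the sign convention of \eqref{eq:30a} the correct sufficient condition is $\tau^{-1}>\kappa_o$. Indeed, take $X=\R$, $\phi(x)=-x^2$, $o=0$, $\kappa_o=2$, $\phi_o=0$: then \eqref{eq:30a} holds and $\tau^{-1}>-\kappa_o$ is satisfied for every $\tau>0$, yet $F$ is unbounded below whenever $\tau>1/2$, and in that case no point at all can satisfy \eqref{eq:cap1:67a} (the right-hand side tends to $-\infty$ along $V\to\infty$, since $F$ decays quadratically while the relaxation term grows only linearly in $\sfd(V,U^n_\taueta)$). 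So the theorem as literally stated fails; this is an internal sign inconsistency of the paper (elsewhere it uses $\tau^{-1}>-\lambda$, and $\kappa_o$ here plays the role of $-\lambda$). Your proof is complete once the hypothesis is read in the way that makes $F$ bounded below, and nothing else in your argument is affected.
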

Since under the general assumptions of Theorem \ref{thm:RMMSexist}
the relaxed minimizing movement scheme admits a $(\tau,\eta)$-discrete
solution $\Pc U\taueta$ for fixed $\eta>0$ and
arbitrarily small step size $\tau$,
it is natural to ask what its limit as $\tau\downarrow0$.
A first result in this direction is provided by the next theorem,
which shows that the minimizing movement scheme is \emph{consistent}
with the definition of $\lambda$-gradient flow \ref{def:GFlow}.
Notice that in Theorem \ref{thm:Error_Estimate}
we will assume \emph{a priori} that the $\lambda$-gradient
flow of $\phi$ exists to get the convergence of $\Pc U\taueta$; in
Section \ref{sect:genteo} we will discuss how to remove this strong
assumption.
Still it is sometimes useful to know that any $\lambda$-gradient flow,
no matter how it has been constructed, admits a uniformly
converging discrete approximation, which exhibits nice variational properties.
\begin{theorem}
  \label{thm:Error_Estimate}
  Let us assume that there exists
  the $\lambda$-gradient flow $\FlowName_t$ of $\phi$ according to
  Definition
  \ref{def:GFlow} and that
  $\overline{D(\phi)}$ is geodesically convex.
  Let $\tau>0,\eta\ge0$ satisfy $\eta-\lambda<\frac 1{2\tau}$,
  and let the sequence $(U^n_\taueta)_{n\in \N}\subset D(\phi)$ be a $(\tau,\eta)$-discrete
  minimizing movement with
  $U^0_\taueta\in \DomainSlope\phi $.
  Setting $\alpha=\alpha_\taueta:=\frac 1{2\tau}\log(1+2(\lambda-\eta)\tau)$ we have
  the \emph{a priori} error estimate
  \begin{equation}
    \label{eq:24bis}
    \Dist{\mtS_t(u_0)}{\Pc U\taueta(t)}\le \Dist{u_0}{U^0_\taueta}+
    \rme^{-\alpha T}\sqrt {T\tau}\MetricSlope\phi{U^0_\taueta}\quad
    \forevery t\in [0,T].
  \end{equation}
  In particular if for some $\eta\ge0$ and every $\tau\in (0,\tau_0)$ $\Pc
  U\taueta$
  is a family of $(\tau,\eta)$-discrete solutions with $U^0_\taueta=u_0\in D(|\partial\phi|)$, then
  $\lim_{\tau\downarrow0}\Pc U\taueta(t)=\mtS_t(u_0)$ uniformly on
  every compact {interval}.
\end{theorem}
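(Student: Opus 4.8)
The plan is to reproduce, in the metric setting, the finite‑dimensional error estimate \eqref{eq:23}: I compare the exact flow $t\mapsto\Flow t{u_0}$, for which the integrated inequality \eqref{eq:cap1:78bis} and its pointwise form \eqref{eq:EVIevery} are available, with the relaxed discrete solution $\Pc U\taueta$, for which the minimality \eqref{eq:cap1:67a} and the energy bound \eqref{eq:cap4:22a} hold. The two families are coupled by \emph{eliminating the energy}: testing the continuous inequality against the frozen discrete value $U^n_\taueta$ and the discrete inequality against the frozen continuous value $u_t$ produces opposite copies of $\phi(U^n_\taueta)-\phi(u_t)$ that cancel on addition. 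As in the smooth case only the lower bound $\lambda$ enters, through the geodesic $\lambda$‑convexity of $\phi$ granted by Theorem \ref{eviconv} (here we use that $\overline{D(\phi)}$ is geodesically convex, so that the geodesic joining $U^n_\taueta$ to any $v\in\Dom\phi$ is an admissible competitor in \eqref{eq:cap1:67a}).

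First I would collect the discrete a priori estimates. Summing \eqref{eq:cap4:22a} yields the energy dissipation $\tfrac1{2\tau}\sum_{k\le n}\DistSquare{U^k_\taueta}{U^{k-1}_\taueta}\le\phi(U^0_\taueta)-\phi(U^n_\taueta)$; more importantly, combining the geodesic $\lambda$‑convexity with the relaxed minimality I would establish the discrete analogue of the slope decay \eqref{eq:12}, namely that the \emph{discrete velocity} $\tau^{-1}\Dist{U^n_\taueta}{U^{n-1}_\taueta}$ is almost monotone and obeys $\tau^{-1}\Dist{U^n_\taueta}{U^{n-1}_\taueta}\le \rme^{-\alpha t^{n-1}_\tau}\MetricSlope\phi{U^0_\taueta}$. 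This is precisely where the per‑step contraction rate $\rme^{2\alpha\tau}=1+2(\lambda-\eta)\tau$ materialises. A Cauchy–Schwarz inequality over the $N=T/\tau$ steps then turns these $\tau$‑small increments into the announced $\sqrt{T\tau}$ scaling, $\big(\sum_{n\le N}\DistSquare{U^n_\taueta}{U^{n-1}_\taueta}\big)^{1/2}\lesssim \rme^{-\alpha T}\sqrt{T\tau}\,\MetricSlope\phi{U^0_\taueta}$, the exponential weights assembling into the prefactor $\rme^{-\alpha T}$.

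The heart of the proof is the comparison recursion. On each interval $t\in(t^{n-1}_\tau,t^n_\tau]$, where $\Pc U\taueta(t)\equiv U^n_\taueta$, I would apply \eqref{eq:EVIevery} with frozen target $v:=U^n_\taueta$ and simultaneously test \eqref{eq:cap1:67a} with $V:=u_t:=\Flow t{u_0}$. Adding, the energy differences cancel and, setting $e(t):=\DistSquare{u_t}{\Pc U\taueta(t)}$, one is left with the closed differential inequality $\tfrac12{\frac\d\dt}^+ e(t)+\tfrac\lambda2 e(t)\le \tfrac1{2\tau}\big(\DistSquare{u_t}{U^{n-1}_\taueta}-\DistSquare{u_t}{U^n_\taueta}\big)+\tfrac\eta2\Dist{U^n_\taueta}{U^{n-1}_\taueta}\Dist{u_t}{U^n_\taueta}$, whose right‑hand side is controlled by $\tau^{-1}\Dist{U^n_\taueta}{U^{n-1}_\taueta}$ times $\Dist{u_t}{U^n_\taueta}$ plus an $\eta$‑term of the same order. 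Multiplying by the integrating factor $\rme^{\lambda t}$ (exactly the passage from \eqref{eq:6} to \eqref{eq:10}) and integrating across the grid, the coupling term $\DistSquare{u_t}{U^{n-1}_\taueta}$ telescopes against the previous interval, producing a recursion for $\sup_t\Dist{u_t}{\Pc U\taueta(t)}$ driven by the increments $\Dist{U^n_\taueta}{U^{n-1}_\taueta}$ and initialised by $\Dist{u_0}{U^0_\taueta}$.

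The main obstacle is managing this mixed continuous/discrete recursion: the target in \eqref{eq:EVIevery} jumps at each grid point, so I must absorb the jumps of $\Pc U\taueta$ — controlled in $\ell^2$ precisely by the quantity estimated in the second step via the triangle inequality $\big|\Dist{u_{t^{n-1}_\tau}}{U^n_\taueta}-\Dist{u_{t^{n-1}_\tau}}{U^{n-1}_\taueta}\big|\le\Dist{U^n_\taueta}{U^{n-1}_\taueta}$ — and align the exponential weights $\rme^{\lambda t}$ so that the telescoping is exact, while checking that $\eta$ enters only through $\alpha$ and does not degrade the bound. A discrete Gronwall argument, fed by the Cauchy–Schwarz estimate of the second step, then delivers \eqref{eq:24bis}. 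Finally, the convergence claim is immediate: taking $U^0_\taueta=u_0\in\DomainSlope\phi$ annihilates the first term, and since $\alpha_\taueta\to\lambda$ and $\sqrt{T\tau}\to0$ we get $\rme^{-\alpha T}\sqrt{T\tau}\,\MetricSlope\phi{u_0}\to0$ as $\tau\downarrow0$, i.e.\ $\Pc U\taueta(t)\to\Flow t{u_0}$ uniformly on every compact interval.
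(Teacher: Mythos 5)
You should first note that the paper itself states Theorem \ref{thm:Error_Estimate} \emph{without proof} (it defers to \cite{Savare10}), so your argument can only be judged on its own merits; judged that way, it has a genuine gap at its central step. Adding \eqref{eq:EVIevery}, tested at $v:=U^n_\taueta$, to \eqref{eq:cap1:67a}, tested at $V:=u_t$, does cancel the energies, but what remains is
\begin{equation*}
  \frac 12 {\frac\d\dt}^{+}\DistSquare{u_t}{U^n_\taueta}+\frac\lambda2\DistSquare{u_t}{U^n_\taueta}
  \le \frac1{2\tau}\Big(\DistSquare{u_t}{U^{n-1}_\taueta}-\DistSquare{U^n_\taueta}{U^{n-1}_\taueta}\Big)
  +\frac\eta2\,\Dist{U^n_\taueta}{U^{n-1}_\taueta}\,\Dist{u_t}{U^n_\taueta},
\end{equation*}
i.e.\ the negative term is $\DistSquare{U^n_\taueta}{U^{n-1}_\taueta}$, not $\DistSquare{u_t}{U^n_\taueta}$ as you wrote. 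The inequality you wrote is, after rearrangement, exactly the \emph{discrete} \EVIname, which in a metric space is available only when $\Phi(\tau,U^{n-1}_\taueta;\cdot)$ is $(\tau^{-1}+\lambda)$-convex along connecting curves, i.e.\ under assumption \eqref{convpphi}/\eqref{eq:33bis} of Theorem \ref{genteo}; that is a hypothesis on the squared distance (it fails along geodesics of $\PP_2(\Rd)$, $d\ge 2$, by Theorem \ref{teo:wasspc}) which Theorem \ref{thm:Error_Estimate} deliberately avoids. The discrepancy is fatal to your scheme: in the correct inequality the term $\frac1{2\tau}\DistSquare{u_t}{U^{n-1}_\taueta}$ is of the same order as the quantity being estimated and does \emph{not} telescope — the previous interval's evolution controls $\DistSquare{u_t}{U^{n-1}_\taueta}$ only for $t\le t^{n-1}_\tau$, whereas you need it for $t\in(t^{n-1}_\tau,t^n_\tau]$ — and if you instead expand it by the triangle inequality, each step contributes an error of \emph{first} order in $\Dist{U^n_\taueta}{U^{n-1}_\taueta}$; summation then gives a bound of order $\sum_n\Dist{U^n_\taueta}{U^{n-1}_\taueta}\sim T\,\MetricSlope\phi{U^0_\taueta}$, which does not even tend to $0$ as $\tau\down0$, let alone produce the Crandall--Liggett rate $\sqrt{T\tau}$.

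Your preliminary steps, by contrast, are realizable with the stated ingredients: testing \eqref{eq:cap1:67a} along geodesics (admissible since $\overline{D(\phi)}$ is geodesically convex and $\phi$ is $\lambda$-convex along them by Theorem \ref{eviconv}) yields the two one-sided bounds $\MetricSlope\phi{U^n_\taueta}\le(\tau^{-1}+\eta/2)\,\Dist{U^n_\taueta}{U^{n-1}_\taueta}$ and $(\tau^{-1}+\lambda-\eta/2)\,\Dist{U^n_\taueta}{U^{n-1}_\taueta}\le\MetricSlope\phi{U^{n-1}_\taueta}$, from which a per-step decay of slope and discrete velocity of the kind you assert does follow (modulo matching the exact constant $\alpha$; note also that $\alpha_\taueta\to\lambda-\eta$, not $\lambda$, as $\tau\down0$). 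What is missing is the mechanism converting these into \eqref{eq:24bis}: one needs a recursion in which the squared error $\DistSquare{u_{t^n_\tau}}{U^n_\taueta}$ gains per step only $O\big(\tau^2\MetricSlopeSquare\phi{U^{n-1}_\taueta}\big)$ plus quantities that telescope, so that local errors accumulate in $\ell^2$, with the prefactor $\rme^{-\alpha T}$ coming from the contraction weights \eqref{eq:cap1:81} of the semigroup rather than from Cauchy--Schwarz (which, used as in your second step, cannot produce a factor smaller than $1$ when $\alpha>0$). Building such a quadratic recursion from the EVI alone — for instance by comparing $U^n_\taueta$ with one step of the flow started at $U^{n-1}_\taueta$ via the expansion \eqref{eq:3bis} and then chaining with \eqref{eq:cap1:81} — is the hard core of the theorem, and the proposal does not supply it.
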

Let us remark that $\eta$ has been kept fixed in the previous
convergence result, so that the coefficients $\alpha_\taueta=\frac
1{2\tau}\log(1+2(\lambda-\eta)\tau)$
in the estimate \eqref{eq:24bis} are
uniformly bounded from below as $\tau\down0$.

\subsection{Stability of $\lambda$-gradient flows under
  $\Gamma$-convergence}
\label{subsec:stability}
We conclude this section by showing a simple stability property of Gradient
Flows with respect to perturbations of the generating functional
$\phi$.
Here we consider a coercive family
of $\Gamma$-converging functionals $\phi^h:X\to(-\infty,+\infty]$, $h\in \bar\N=\N\cup
\{+\infty\}$, which are quadratically bounded from below, uniformly
w.r.t.\ $h$: for some $o\in X$, $\phi_o,\kappa_o\in \R$
they satisfy
\begin{equation}
  \label{eq:cap1:13bis}
  \phi^h(x)+\frac {\kappa_o}{2}\DistSquare x{o }\ge \phi_o\quad
  \text{for every } x\in  X,\ h\in \N.
\end{equation}
In the next definition we jointly recall the (sequential) notions of $\Gamma$-convergence and of coercivity
\cite[Def. 1.12]{DalMaso93}.
For notational convenience, we will identify monotone subsequences $(h_n)_{n\in \N}$
with their unbounded image $H=\{h_n:n\in \N\}\subset \N$;
expressions like $\lim_{h\in H}$, $\liminf_{h\in H}$ have an obvious meaning as limits for 
$h\uparrow +\infty, h\in H$.
\begin{definition}[Sequential $\Gamma( X,\DistName)$-convergence of
  coercive functionals]
  \label{def:Gamma-coercive}
  We say that $(\phi^h)_{h\in \N}$ is a coercive family of functionals
  $\Gamma( X,\DistName)$-sequentially converging to a proper functional $\phi^\infty: X
  \to (-\infty,+\infty]$
  if 
  the following two conditions are satisfied:
  \begin{enumerate}
  \item For every infinite subset $H\subset \N$ and
    every bounded sequence $(x^h)_{h\in H}$
    with $\sup_{h\in H} \phi^h(x^h)<+\infty$, there exists
    an infinite subsequence $H'\subset H$ such that 
    $\lim_{h\in H'}x^h=\xinfty\in \Dom\phinfty$ and
    \begin{equation}
      \label{eq:cap21:3a}
      \liminf_{h\in H'}\phi^{h}(x^{h})\ge \phinfty(\xinfty).
    \end{equation}
  \item For every $\barxinfty\in D(\phinfty)$ there exists a sequence
    $(\bar x^h)_{h\in \N}$ such that 
    \begin{equation}
      \label{eq:cap21:2a}
      \lim_{h\up+\infty}\Dist{\bar x^h}\barxinfty=0,\quad
      \lim_{h\up+\infty}\phi^h(\bar x^h)=\phinfty(\barxinfty).
    \end{equation}
  \end{enumerate}
\end{definition}
It is possible to prove that %We shall see in the next Lemma \ref{le:uniform_lower_bounds} that 
a coercive family of $\lambda$-convex functionals $(\phi^h)_{h\in \N}$
$\Gamma( X,\DistName)$-converging to $\phi^\infty$
always satisfies the uniform lower bound \eqref{eq:cap1:13bis}.

Let us now state our first convergence result \cite{Savare10}.
\begin{theorem}
  \label{thm:main_stability1}
  Let $(\phi^h)_{h\in \N}$ be a coercive family of functionals
  $\Gamma( X,\DistName)$-converging to $\phi^\infty$ and
  let
  us assume that the
  $\lambda$-gradient flows $\FlowName^h$
  exist %in $\overline{\Dom{\phi^h}}$
  for every $h\in \N$.
  Then the functional $\phinfty$ admits
  a $\lambda$-gradient flow $\FlowName^\infty$ % in $\overline{\Dom\phinfty}$
  and for every sequence $u^h_0\in
  \overline{D(\phi^h)}$ converging to $u_0^\infty\in \overline{\Dom\phinfty }$ we have
  \begin{gather}
    \label{eq:cap21:4}
    \lim_{h\up+\infty}\FlowName^h_t(u^h_0)=\FlowName^\infty_t(u_0^\infty),\quad
    \lim_{h\up+\infty}\phi^h(\FlowName^h_t(u^h_0) )=\phinfty(\FlowName^\infty_t(u_0^\infty) )\quad
    \forevery t>0,
%     \label{eq:79}
%     \lim_{h\up+\infty}\MetricSlope{\phi^h}{\FlowName^h_t(u^h_0)}=\MetricSlope\phinfty{\FlowName^\infty_t(u_0^\infty)}
%     \quad\text{for every $t\in (0,+\infty)\setminus\calC^\infty$},
  \end{gather}
  {locally uniformly on $(0,+\infty).$}
%   where $\calC^\infty\subset (0,+\infty)$ is the jump set of $t\mapsto
%   \MetricSlope\phinfty{\FlowName^\infty_t(u_0^\infty)} $ as in Theorem \ref{thm:main1}.
\end{theorem}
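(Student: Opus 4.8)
The plan is to prove Theorem \ref{thm:main_stability1} by combining the derivative-free characterization \eqref{eq:cap1:78bis} with the two defining conditions of $\Gamma$-convergence. The central object will be the family of curves $u^h_t := \FlowName^h_t(u^h_0)$, which I would like to show converges to a curve $u^\infty_t$ that is itself a solution of $\EVI X\DistName{\phi^\infty}\lambda$; the semigroup property and the structural results of Theorem \ref{thm:main1} (uniqueness, regularizing effects, contraction) will then allow me to assemble the limit maps $\FlowName^\infty_t$ and verify they form a genuine $\lambda$-gradient flow.

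First I would establish uniform \emph{a priori} bounds on the family $(u^h)_{h\in\N}$. Fixing a recovery point, pick $\barxinfty\in D(\phi^\infty)$ arbitrary and use condition (2) of Definition \ref{def:Gamma-coercive} to produce $\bar x^h\to\barxinfty$ with $\phi^h(\bar x^h)\to\phi^\infty(\barxinfty)$. Applying the regularization/a priori estimate \eqref{eq:2bis} to each flow $u^h$ with the test point $v=\bar x^h$ gives, for each fixed $t>0$, uniform control of $\DistSquare{u^h_t}{\bar x^h}$, of $\phi^h(u^h_t)$ from above, and of the slope $\MetricSlope{\phi^h}{u^h_t}$; the uniform lower bound \eqref{eq:cap1:13bis} converts the upper bound on $\phi^h$ into genuine boundedness. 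The local Lipschitz estimate from the regularizing-effects part of Theorem \ref{thm:main1}, together with the contraction property \eqref{eq:cap1:81} to handle the initial layer, yields equicontinuity of the curves $t\mapsto u^h_t$ on compact subintervals of $(0,+\infty)$.

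The heart of the argument is the passage to the limit in the integrated inequality \eqref{eq:cap1:78bis}. Along a subsequence $H'$ furnished by the coercivity condition (1), I would extract pointwise limits $u^\infty_t := \lim_{h\in H'} u^h_t$; equicontinuity upgrades this to locally uniform convergence. Now fix $s<t$ and an arbitrary $v^\infty\in D(\phi^\infty)$, choose a recovery sequence $v^h\to v^\infty$ with $\phi^h(v^h)\to\phi^\infty(v^\infty)$, and write \eqref{eq:cap1:78bis} for each $h\in H'$:
\begin{equation*}
  \frac{\rme^{\lambda(t-s)}}2\DistSquare{u^h_t}{v^h}-\frac12\DistSquare{u^h_s}{v^h}\le
  \sfE\lambda{t-s}\Big(\phi^h(v^h)-\phi^h(u^h_t)\Big).
\end{equation*}
On the left the distances pass to the limit by continuity; on the right the recovery sequence handles $\phi^h(v^h)$, and the liminf inequality \eqref{eq:cap21:3a} applied to $u^h_t\to u^\infty_t$ gives $\liminf_h \phi^h(u^h_t)\ge\phi^\infty(u^\infty_t)$, which is exactly the direction needed since $-\phi^h(u^h_t)$ appears. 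Thus the limit curve satisfies \eqref{eq:cap1:78bis} for $\phi^\infty$, so by Theorem \ref{thm:uniqueness} it is a solution of $\EVI X\DistName{\phi^\infty}\lambda$. Uniqueness of $\EVI X\DistName{\phi^\infty}\lambda$-solutions (the contraction estimate \eqref{eq:cap1:81}) forces the whole family, not merely a subsequence, to converge, and lets me define $\FlowName^\infty_t(u^\infty_0)$ unambiguously; the semigroup identity and continuity in the initial datum are inherited in the limit. The convergence of the energies in \eqref{eq:cap21:4} then follows by combining the $\liminf$ bound from condition (1) with a matching $\limsup$ bound obtained by inserting $v=u^\infty_t$ as a competitor and exploiting \eqref{eq:Cap12:111}, or directly from the already established convergence of the $\EVIname'_\lambda$ inequalities.

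The main obstacle I anticipate is the initial layer at $t\downarrow0$: the a priori estimates degenerate as $t\to0$ (the slope bound in \eqref{eq:2bis} blows up), so equicontinuity is only locally uniform on $(0,+\infty)$, matching the stated conclusion. Reconciling this with the initial conditions $u^h_0\to u^\infty_0$ requires care and is where the contraction estimate \eqref{eq:cap1:81}, rather than the regularizing estimates, must be used to control $\Dist{u^h_t}{u^h_0}$ uniformly in $h$ for small $t$. A secondary subtlety is the one-sidedness of $\Gamma$-convergence: the liminf inequality gives the energy bound in precisely the correct direction for the term $-\phi^h(u^h_t)$, but establishing the \emph{upper} semicontinuity needed for the energy-convergence statement in \eqref{eq:cap21:4} requires the separate recovery-sequence argument rather than following for free.
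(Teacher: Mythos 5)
Your proposal is correct and its core is the same as the paper's proof: uniform bounds obtained by testing the regularizing estimate \eqref{eq:2bis} against a recovery sequence, equicontinuity on compact subsets of $(0,+\infty)$ plus pointwise relative compactness from condition (1) and Ascoli--Arzel\`a, passage to the limit in \eqref{eq:cap1:78bis} using \eqref{eq:cap21:2a} for the test point and \eqref{eq:cap21:3a} for $u^h_t$, identification of the limit through Theorem \ref{thm:uniqueness}, and upgrade to full convergence by uniqueness. You diverge from the paper in two auxiliary steps, and both of your variants can be made to work, with caveats. For the initial layer, the paper never proves uniform-in-$h$ continuity at $t=0$: it passes to the limit in \eqref{eq:cap1:78bis} directly at $s=0$ (legitimate since $u^h_0\to u^\infty_0$ by hypothesis), gets \eqref{eq:38} from the lower semicontinuity of $\phi^\infty$, and concludes by density of $D(\phi^\infty)$ in $\overline{D(\phi^\infty)}$. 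Your contraction-based route is fine, but note that \eqref{eq:cap1:81} alone cannot control $\Dist{u^h_t}{u^h_0}$: you must compare $u^h_t$ with the flow $\FlowName^h_t(v^h)$ started at a recovery sequence $v^h$ for some $v^\infty\in D(\phi^\infty)$ close to $u^\infty_0$, and then bound $\DistSquare{\FlowName^h_t(v^h)}{v^h}\le 2\rme^{-\lambda t}\sfE\lambda t\big(\phi^h(v^h)-\phi_o\big)$ using \eqref{eq:2bis} and \eqref{eq:cap1:13bis}; so ``contraction rather than the regularizing estimates'' is not accurate --- you need both. For the energy upper bound, the paper argues differently from you: it uses the uniform slope bound $M_t$ furnished by \eqref{eq:9bis} together with the subgradient-type inequality $\phi^h(\bar u^h_t)\ge \phi^h(u^h_t)-M_t\Dist{\bar u^h_t}{u^h_t}-\frac\lambda2\DistSquare{\bar u^h_t}{u^h_t}$ along a recovery sequence $\bar u^h_t\to u^\infty_t$. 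Your alternative via \eqref{eq:Cap12:111} also works, provided (i) the competitor is the recovery sequence $\bar u^h_t$, not $u^\infty_t$ itself (you acknowledge this at the end, but the sentence ``inserting $v=u^\infty_t$'' should be corrected accordingly), and (ii) the estimate is applied to the flow restarted at a time $s<t$, giving $\phi^h(u^h_t)\le \phi^h(\bar u^h_t)+\DistSquare{u^h_s}{\bar u^h_t}\big/\big(2\sfE\lambda{t-s}\big)$, after which one lets $s\uparrow t$ and uses the local Lipschitz continuity of $u^\infty$ to kill the remainder; applied from $s=0$ it leaves the spurious term $\DistSquare{u^\infty_0}{u^\infty_t}\big/\big(2\sfE\lambda t\big)$, which does not vanish.
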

\begin{proof}
  Here we consider the simpler case when \eqref{eq:cap1:13bis} holds
  for $\kappa_o=0$; it is not restrictive to assume $\lambda\le 0$ and $\phi_o\ge0$.

  \emph{Step 1: uniform bounds.}
  We set $u^h_t:=\FlowName_t^h(u_0^h)$ and we
  fix a compact time interval $[0,T]$, $T>0$,
  a point $o^\infty\in \Dom\phinfty$ and a corresponding sequence $o^h$ as in \eqref{eq:cap21:2a}.
%   \eqref{eq:cap1:13bis} yields
%   \begin{equation}
%     \label{eq:76}
%     \phi^h(x)+\kappa_o\DistSquare x{o^h}\ge
%     \phi_o-\kappa_o\DistSquare{o^h}o\ge c_o\quad\text{for
%       every }x\in  X,\ h\in \N,
%   \end{equation}
%   for some constant $c_o$ independent of $h$.
  \eqref{eq:cap1:78bis} yields
%   \eqref{eq:EVI} yield
%   \begin{align*}
%     \frac 12\frac{\d}{\d t}\DistSquare{u^h_t}{o^h}+
%     \frac{\lambda_o}2 \DistSquare{u^h_t}{u^h_o}
%     \le \phi^h(o^h)-\tilde\phi_o,\quad
%     \lambda_o:=\lambda-2\kappa_o.
%   \end{align*}
%   An integration in time after a multiplication by $\rme^{\lambda_ot}$ yields
   \begin{gather}
      \label{eq:cap2:12}
      %\begin{aligned}
        \DistSquare{u_t^h}{o^h}%&
        \le \Big(\DistSquare{u_0{^h}}{o^h}+
        2\sfE{\lambda}t \,\phi^h(o^h)
        \Big)\rme^{-\lambda t};
    \end{gather}
    and therefore
    there exists a constant $C_1(T)$ independent of $h$ such that
    \begin{equation}
      \label{eq:78}
      \Dist{u^h_t}{{o^h}}\le C_1(T)\quad\forevery t\in [0,T],\ h\in \N.
    \end{equation}
    The regularizing estimate
    \eqref{eq:2bis} yields
    \begin{equation}
      \label{eq:9bis}
      \frac {\rme^{\lambda t}}2\DistSquare{u^h_t}{o^h}+
      \sfE\lambda t \phi^h(u^h_t)+\frac {\big(\sfE \lambda t\big)^2}2\MetricSlopeSquare{\phi^h}{u^h_t}\le
      \frac 12\DistSquare{u^h_0}{o^h}+\sfE\lambda t\phi^h(o^h)
      \le C_2(T)
    \end{equation}
  if $t\in (0,T]$, for a suitable constant $C_2(T)$ independent of $h$.

  In particular for every $0<S<T$ there exists a constant $C(S,T)$
    such that
    \begin{equation}
      \label{eq:77}
      \phi^h(u^h_t)\le C(S,T),\quad \MetricSlope{\phi^h}{u^h_t}=|\dot u^h_{t+}|\le
      C(S,T)\quad
      \forevery t\in [S,T].
    \end{equation}
    \emph{Step 2: compactness.}
    By the estimates of the previous point, the sequence $(u^h)_{h\in \N}$
    is uniformly Lipschitz in each bounded interval $[S,T]$ of
    $(0,+\infty)$ and for every fixed $t$
    $\{u^h_t\}_{h\in \N}$ satisfies the assumptions
    of Definition \ref{def:Gamma-coercive}, so that $(u^h_t)_{h\in
      \N}$ is relatively compact in $ X$.
    Applying Ascoli-Arzel\`a theorem we can find a subsequence
    $H=(h_n)_{n\in \N}$ such that $u^{h_n}$ converge locally uniformly
    in time
    to a locally Lipschitz curve ${u^\infty}$ in $(0,+\infty)$.

    \emph{Step 3: characterization of the limit.}
    Let us now fix an arbitrary point $v^\infty\in \Dom\phinfty$ and a
    corresponding approximating sequence $v^h\in \Dom{\phi^h}$ as in \eqref{eq:cap21:2a}.
    By \eqref{eq:cap1:78bis} of Theorem \ref{thm:uniqueness} we know
    that
    \begin{equation}
      \label{eq:39}
      \frac {\rme^{\lambda(t-s)}}2\DistSquare{u^h_t}{v^h}-
      \frac 12\DistSquare{u^h_s}{v^h}\le
      \sfE\lambda{t-s}\Big(\phi^h({v^h})-\phi^h(u^h_t)\Big);
    \end{equation}
  We then pass to the limit in \eqref{eq:39} as $h\up+\infty, h\in H$,
  using the facts that $u^h_t$ converges pointwise to $u_t$ in $ X$
  and applying \eqref{eq:cap21:3a} for $u^h_t$ and \eqref{eq:cap21:2a} for $v^h$;  
  we obtain
  \begin{equation}
    \label{eq:75}
    \frac {\rme^{\lambda(t-s)}}2\DistSquare{u^\infty_t}{v^\infty}-
    \frac 12\DistSquare{u_s^\infty}{v^\infty}\le
    \sfE\lambda{t-s}\Big(\phi(v^\infty)-\phi(u_t^\infty)\Big)
  \end{equation}
  for every $v^\infty\in \Dom\phinfty$, $0\le s<t$.
  A further application of Theorem  \ref{thm:uniqueness}
  shows that $u^\infty$ solves $\EVI X\DistName\phinfty\lambda$.

  In order to check that $\lim_{t\downarrow0}u^\infty_t=u^\infty_0$
  we use \eqref{eq:75} at $s=0$ and the lower semicontinuity of $\phinfty$,
  which yields
  \begin{equation}
    \label{eq:38}
    \limsup_{t\downarrow 0}\DistSquare{u^\infty_t}{v^\infty}\le \DistSquare{u^\infty_0}{v^\infty}\quad
    \text{for every }v^\infty\in \Dom\phinfty;
  \end{equation}
  since $u_0^\infty\in \overline{\Dom\phinfty}$
  we conclude that $\lim_{t\down0}\Dist{u_t^\infty}{u_0^\infty}=0.$

  Since the limit is the unique solution of
  $\EVI X\DistName\phinfty\lambda$ starting from
  $u_0^\infty$, we conclude that the whole sequence $u^h$ converge to
  $u^\infty$.

  \emph{Step 4: convergence of energy.}
  We argue as in the proof of \eqref{eq:cap2:16} and \eqref{eq:80}:
  for a fixed $t>0$ and applying \eqref{eq:cap21:2a} to $u^\infty_t$
  we find a sequence $(\bar u^h_t)_{h\in\N}$ converging to
  $u^\infty_t$ with $\lim_{h\up+\infty}\phi^h(\bar u^h_t)=\phinfty(u^\infty_t)$.
  By estimate \eqref{eq:9bis}, the slope $\MetricSlope{\phi^h}{u^h_t}$ is
  uniformly bounded by a constant $M_t$ so that
  \begin{displaymath}
    \phi^h(\bar u^h_t)\ge \phi^h(u^h_t)-M_t\Dist{\bar
      u^h_t}{u^h_t}-\frac\lambda 2\DistSquare{\bar
      u^h_t}{u^h_t}.
  \end{displaymath}
  Passing to the limit as $h\up+\infty$ we get 
  %\begin{displaymath}
    $\limsup_{h\up+\infty}\phi^h(u^h_t)\le \phinfty(u^\infty_t)$,
  %\end{displaymath}
    which combined with \eqref{eq:cap21:3a} yields the second identity of \eqref{eq:cap21:4}.
\end{proof}

\end{section}

\begin{section}{Convergence of the minimizing movement method and generation results}\label{sect:genteo}

We have seen in Theorem \ref{thm:Error_Estimate} a first convergence
theorem for the relaxed minimizing movement method: it basically says
that \emph{if $\phi$ admits a $\lambda$-gradient flow} according to
Definition \ref{def:GFlow} then any family of discrete solutions
converges to the unique continuous solution of \eqref{eq:EVI} as the
time step converges to $0$.

In this section we revert this point of view and we try to prove
\emph{the existence of the $\lambda$-gradient flow} when $\phi$
is geodesically $\lambda$-convex  by studying the
convergence of the (relaxed) minimizing movement Method.

In the following we present three different results in this direction{:}
\begin{enumerate}
\item {A} simpler convergence result when \emph{the sublevels of $\phi$
    are locally compact}: in this case we avoid any
  geometric restriction on the distance $\sfd$ of $X$ and we do not
  need any Cauchy estimate. On the other hand, the (not necessarily
  unique) limit points of the discrete solutions are just curves of
  Maximal Slope, according to Definition \ref{def:maxslope}: in
  general {it} is not possible to prove that they solve \eqref{eq:EVI}.
\item A first generation result for $\lambda$-gradient flows, by
  assuming
  that the minimizing movement generating functional $\Phi(\tau,U;V)$ defined by
  \eqref{eq:cap4:30} satisfies a suitable convexity property
  (which results from the combination of the convexity of $\sfd^2$ and
  of $\phi$).
\item A second generation result when $\sfd^2(\cdot,v)$ is semiconcave
  along geodesics and the metric space satisfies a \emph{local angle
    condition} between triple of geodesic{s} emanating from the same point.
\end{enumerate}
Differently from the first approach, the last ones provide explicit
Cauchy estimates ensuring the convergence of the method and do not require any
local compactness of the sublevels of $\phi$.

\subsection{Convergence of the variational scheme in the locally
  compact case}
\label{subsec:compact}
Let us first consider the case when $\phi$ is geodesically
$\lambda$-convex and its sublevels are locally compact, i.e.{$\exists\,o\in X$ s.t.}\
\begin{equation}
  \label{eq:30}
  \Big\{x\in X:\phi(x)\le R \text{ and }\sfd(x,o)\le
  R\Big\}\quad\text{are compact in $X$ }\forevery R>0.
\end{equation}
Combining \cite[Proposition 2.2.3, Corollary
2.4.11]{Ambrosio-Gigli-Savare05} we get
\begin{theorem}[Limit{s} of discrete minimizing movements are curves of
  maximal slope]
  If $\phi$ is geodesically $\lambda$-convex and satisfies
  \eqref{eq:30} then for every $\tau>0$ satisfying
  $\tau^{-1}>-\lambda$ and $U^0_\tau\in D(\phi)$ the minimizing
  movement variational scheme admits at least one solution
  $(U^n_\tau)_{n\in \N}$.
  If moreover
  \begin{equation}
    \label{eq:32}
    \lim_{\tau\downarrow0}U^0_\tau=u_0,\quad\lim_{{\tau\downarrow0}}\phi(U^0_\tau)=\phi(u_0),
  \end{equation}
  and $\Pc U\tau$ is a family of discrete solutions, any infinitesimal
  sequence of time steps $\tau_n\down0$ admits a convergent
  subsequence (still denoted by $\tau_n$) and a limit curve $u\in
  AC^2_{\rm loc}([0,+\infty);X)$ such that
  \begin{equation}
    \label{eq:31}
    \lim_{n\up+\infty}\Pc U{\tau_n}(t)=u_t,\quad
    \lim_{n\up+\infty}\phi(\Pc
    U{\tau_n}(t))=\phi(u_t)\quad\forevery t\ge0
  \end{equation}
  uniformly in each compact interval $[0,T]$.
  $u$ is a curve of maximal slope (see {D}efinition \ref{def:maxslope}), satisfying the energy identity
  \begin{equation}\label{cmaxs2}
    \frac{1}{2}\int_s^t|\u|^2(r)\,dr+\frac{1}{2}\int_s^t|\partial\phi|^2(u_r)\,dr=
    \phi(u_s)-\phi(u_t)\quad
    \text{for all $0<s\leq t<+\infty$.}
\end{equation}
\end{theorem}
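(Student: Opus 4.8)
The plan is to run the De Giorgi minimizing movement scheme and to use geodesic $\lambda$-convexity only at two points: the lower semicontinuity of the metric slope and the chain rule (strong upper gradient property). First I would prove existence of a discrete solution. Since $\lambda$-convexity forces a quadratic lower bound on $\phi$, for $\tau^{-1}>-\lambda$ the map $V\mapsto\Phi(\tau,U^{n-1}_\tau;V)$ of \eqref{eq:cap4:30} is bounded below and coercive: the penalization $\frac1{2\tau}\sfd^2(V,U^{n-1}_\tau)$ confines any minimizing sequence to a bounded sublevel of $\phi$, which by \eqref{eq:30} is compact, so the direct method together with the lower semicontinuity of $\phi$ yields a minimizer $U^n_\tau$ at each step. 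Testing the minimality \eqref{eq:74} with $V=U^{n-1}_\tau$ and summing telescopically gives the basic discrete dissipation $\frac1{2\tau}\sum_{k\le n}\sfd^2(U^k_\tau,U^{k-1}_\tau)+\phi(U^n_\tau)\le\phi(U^0_\tau)$, hence uniform bounds on the energy and on the integrated discrete velocity.

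To capture the slope term I would introduce the De Giorgi variational interpolant $\tilde U_\tau$, minimizing $V\mapsto\frac1{2r}\sfd^2(V,U^{n-1}_\tau)+\phi(V)$ over the elapsed subinterval length $r\in(0,\tau]$. The metric Euler--Lagrange inequality for such minimizers identifies $\MetricSlope\phi{\tilde U_\tau(t)}$ with the local discrete velocity and yields the sharp discrete energy dissipation inequality
\[
\frac12\int_s^t|\overline U_\tau'|^2(r)\,dr+\frac12\int_s^t\MetricSlopeSquare\phi{\tilde U_\tau(r)}\,dr\le\phi(\Pc U\tau(s))-\phi(\Pc U\tau(t))
\]
on grid points. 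The velocity bound provides integrated equicontinuity of $\Pc U\tau$, while \eqref{eq:30} gives pointwise relative compactness; a refined Ascoli--Arzel\`a (or Helly-type) argument then extracts an infinitesimal subsequence $\tau_n\downarrow0$ along which $\Pc U{\tau_n}(t)\to u_t$ uniformly on compact intervals, with $u\in AC^2_{\rm loc}(0,+\infty;X)$.

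Passing to the limit, lower semicontinuity of the metric velocity gives $\liminf_n\int_s^t|\overline U_{\tau_n}'|^2\ge\int_s^t|\u|^2$, and lower semicontinuity of the slope (guaranteed by $\lambda$-convexity) gives $\liminf_n\int_s^t\MetricSlopeSquare\phi{\tilde U_{\tau_n}}\ge\int_s^t\MetricSlopeSquare\phi{u_r}$; combined with \eqref{eq:32} and the monotonicity of $n\mapsto\phi(U^n_\tau)$, this produces the energy dissipation inequality \eqref{cmaxs}, so that $u$ is a curve of maximal slope in the sense of Definition \ref{def:maxslope}. For the converse direction, $\lambda$-convexity makes $|\partial\phi|$ a strong upper gradient, so the chain rule gives
\[
\phi(u_s)-\phi(u_t)\le\int_s^t\MetricSlope\phi{u_r}\,|\u|(r)\,dr\le\frac12\int_s^t|\u|^2(r)\,dr+\frac12\int_s^t\MetricSlopeSquare\phi{u_r}\,dr,
\]
which is exactly the reverse of \eqref{cmaxs}; the two inequalities together yield the energy identity \eqref{cmaxs2} and, as a by-product, the energy convergence asserted in \eqref{eq:31}.

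The hard part will be establishing that the metric slope $|\partial\phi|$ is lower semicontinuous and is a strong upper gradient: this is the unique place where geodesic $\lambda$-convexity is genuinely used, and both the passage to the limit in the slope term and the reverse chain-rule inequality rest on it. The careful construction of the variational interpolant $\tilde U_\tau$ and the derivation of the sharp discrete estimate above are the other technically delicate points.
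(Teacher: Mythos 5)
Your proposal is correct and is essentially the paper's own proof: the paper establishes this theorem by simply citing \cite[Proposition 2.2.3, Corollary 2.4.11]{Ambrosio-Gigli-Savare05}, and the argument behind those citations is exactly the one you outline (direct method on compact sublevels for the discrete scheme, De Giorgi's variational interpolant and its Euler--Lagrange slope estimate for the sharp discrete energy-dissipation inequality, refined Ascoli--Arzel\`a compactness, and the lower semicontinuity plus strong-upper-gradient property of the slope of a geodesically $\lambda$-convex functional to upgrade the dissipation inequality to the energy identity \eqref{cmaxs2} and to the convergence of the energies in \eqref{eq:31}). The only points you leave informal---that geodesic $\lambda$-convexity together with \eqref{eq:30} yields the coercivity of $\Phi(\tau,U^{n-1}_\tau;\cdot)$ for $\tau^{-1}>-\lambda$, and that the variational interpolant $\tilde U_{\tau_n}$ converges to the same limit curve as $\Pc U{\tau_n}$---are standard details of that same argument.
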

\begin{corollary}[Existence of Curves of Maximal Slope]
  Under the same assumption{s} of the previous theorem, for every $u_0\in
  D(\phi)$ there exists a curve of maximal slope $u\in AC^2_{\rm loc}([0,+\infty);X)$
  starting from $u_0$ and satisfying \eqref{cmaxs2}.  
\end{corollary}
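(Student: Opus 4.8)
The plan is to deduce the statement as an immediate consequence of the previous theorem, by making the trivial choice of discrete initial data. Given $u_0\in D(\phi)$, for every $\tau>0$ with $\tau^{-1}>-\lambda$ I would simply set $U^0_\tau:=u_0$. This is admissible because $u_0\in D(\phi)$, and it renders the compatibility conditions \eqref{eq:32} automatic: both $U^0_\tau$ and $\phi(U^0_\tau)$ are independent of $\tau$, so that $\lim_{\tau\downarrow0}U^0_\tau=u_0$ and $\lim_{\tau\downarrow0}\phi(U^0_\tau)=\phi(u_0)$ hold trivially.

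Next I would invoke the previous theorem. Since $\phi$ is geodesically $\lambda$-convex and satisfies the local compactness assumption \eqref{eq:30}, for each such $\tau$ the minimizing movement variational scheme admits at least one solution $(U^n_\tau)_{n\in\N}$; I fix one and let $\Pc U\tau$ denote the associated piecewise constant interpolant \eqref{eq:55}. I would then pick an arbitrary infinitesimal sequence of time steps $\tau_n\downarrow0$. The theorem supplies a subsequence (not relabelled) and a limit curve $u\in AC^2_{\rm loc}([0,+\infty);X)$ such that $\Pc U{\tau_n}(t)\to u_t$ and $\phi(\Pc U{\tau_n}(t))\to\phi(u_t)$ uniformly on every compact interval, with $u$ a curve of maximal slope (Definition \ref{def:maxslope}) satisfying the energy identity \eqref{cmaxs2}.

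It remains only to verify that $u$ issues from the prescribed datum $u_0$. Evaluating the convergence \eqref{eq:31} at $t=0$ gives $\lim_{n\up+\infty}\Pc U{\tau_n}(0)=\lim_{n\up+\infty}U^0_{\tau_n}=u_0$, whence $u_0=u_0$ and the limit curve starts from $u_0$, as required. There is essentially no obstacle beyond the work already carried out in the previous theorem: all the analytic content---the existence of the discrete scheme through the direct method enabled by \eqref{eq:30}, the \emph{a priori} energy and slope bounds, the Ascoli--Arzel\`a compactness, and the lower semicontinuity arguments that upgrade the discrete energy dissipation inequality to the identity \eqref{cmaxs2}---is inherited directly. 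The only point demanding care is the consistency of the initial data, which the constant choice $U^0_\tau\equiv u_0$ settles at once; since $u_0\in D(\phi)$ is arbitrary, the existence of a curve of maximal slope from every point of $D(\phi)$ follows.
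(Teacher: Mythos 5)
Your proposal is correct and coincides with the paper's intended argument: the corollary is stated without proof precisely because it is the immediate specialization of the preceding theorem to the constant choice $U^0_\tau\equiv u_0$, which makes \eqref{eq:32} trivial and forces the limit curve to start at $u_0$ via \eqref{eq:31} evaluated at $t=0$. No gaps; only the phrase ``whence $u_0=u_0$'' should read that the limit curve satisfies $u(0)=\lim_{n}\overline U_{\tau_n}(0)=u_0$.
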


\begin{subsection}{Generation of $\lambda$-gradient flows by strong
    convexity of $\Phi$.}

In the case when $X$ is an Hilbert space and $\phi$ is a l.s.c.\ $\lambda$-convex functional,
it is well known that the minimizing movement variational scheme admits a unique solution and
the corresponding discrete solution $U_\tau$ converges to the solution of
\eqref{eq:EVI}. Applying this approximation scheme, it is then possible to show the existence
of the $\lambda$-gradient flow of $\phi$ according to {D}efinition {\ref{def:GFlow}}.

Similar results for minimizing movements of convex functionals in Banach spaces do not always hold: indeed, the characterization of gradient flows through the EVI depends not only on the convexity of $\phi$ but also on structural properties of the distance $\sfd$.    

\noindent One fundamental property is the $1$-convexity of the function $v\mapsto\frac{1}{2}\sfd(v,w)^2$, i.e.
\begin{align}\label{1conv}
 \sfd^2(v_s,w)\leq&(1-s)\sfd^2(v_0,w)+s\sfd^2(v_1,w)-s(1-s)\sfd^2(v_0,v_1)\notag\\
&\forevery v_0,v_1, \,\forevery [0,1]\ni s\mapsto v_s\text{ geodesic between $v_0$ and $v_1$},
\end{align}
which in Banach spaces is equivalent to the fact that $\sfd$ is induced by a scalar product. 

\noindent \eqref{1conv} is satisfied by the geodesic distance on Riemannian manifolds of non-positive sectional curvature and characterizes the Aleksand{ro}v \emph{non-positively curved} (NPC) length spaces,
see e.g.\ \cite{Jost97,Burago-Burago-Ivanov01}.

Actually, using \eqref{1conv} and adapting a Crandall-Liggett argument, {\textsc{Mayer}} \cite{Mayer98} was able to prove \eqref{contrh} and then \eqref{exp} also for geodesically convex functionals on NPC spaces.

A crucial consequence of \eqref{1conv} and the $\lambda$-convexity of $\phi$ is that
the generating functional
${\Phi(\tau,V;U)}$ of the minimizing movement scheme \eqref{eq:cap4:30}
\begin{equation}
  \label{eq:33}
  \Phi(\tau, V;U):=\frac 1{2\tau}\sfd^2(U,V)+\phi(U)\quad
  \tau>0,\ U,V\in X,
\end{equation}
satisfies the $\tau^{-1}+\lambda$-convexity condition along geodesics, i.e.
\begin{equation}
  \label{eq:34}
  \text{the map $U\mapsto \Phi(\tau,{V;U})$ is geodesically $(\tau^{-1}+\lambda)$-convex for every $V\in X$.}
\end{equation}
One of the main contribution{s} of \cite[Chapter 4]{Ambrosio-Gigli-Savare05}
is to show that \eqref{eq:34} can be relaxed,
by assuming the $(\tau^{-1}+\lambda)$-convexity of $\Phi(\tau,{V;\cdot})$ along more general families
of curves in $X$ connecting two arbitrary points in $D(\phi$).

This improvement has been essential to apply the generation result in Wasserstein spaces,
which do not satisfy \eqref{1conv} except for the $1$-dimensional case.

\begin{theorem}[Convergence of the minimizing movement scheme and generation result \cite{Ambrosio-Gigli-Savare05}] \label{genteo}
Let us assume that the functional $\Phi$ defined in \eqref{eq:33} satisfies the following property: 
$\forevery V,U_0,U_1\in D(\phi)$ there exists a curve $\gamma_s:[0,1]\freccia X$ with $\gamma_0=U_0$ and $\gamma_1=U_1$, such that  
\begin{equation}\label{convpphi}
  U\mapsto\Phi(\ttau,V;U)\text{ is $\biggl(\frac{1}{\ttau}+\lambda\biggr)$-convex on $\gamma$ for each $0<\ttau<\frac{1}{\lambda^-}$,}
\end{equation}
i.e.
\begin{equation}
  \label{eq:33bis}
  \Phi(\tau,V;\gamma_s)\le
  (1-s)\Phi(\tau,V;U_0)+s\Phi(\tau,V;U_1)-\frac {1+\lambda\tau}{2\tau}s(1-s)\sfd^2(U_0,U_1).
\end{equation}
\begin{enumerate}\item 
  For every $U^0_\tau=u_0\in \overline{D(\phi)}$ and $\tau>0$
  with $1+\tau\lambda>0$ the minimizing movement method \ref{def:MMS}
  admits a unique solution $(U^n_\tau)_{n\in \N}\subset D(\phi)$
  \item The corresponding discrete solutions $\bar U_\tau$ converge to $u$
  as $\tau\downarrow0$ uniformly on compact intervals.
\item The limit $u$
  is the unique solution of \eqref{eq:EVI}.
  In particular, $\phi$
  admits a $\lambda$-gradient flow according to Definition
  \ref{def:GFlow}, thus satisfying all the properties stated in
  Theorem \ref{thm:main1}.
\item
  There exist universal constants $C_{\lambda,T}$ such that if $u_0\in
  D(|\partial\phi|)$ the optimal error estimate holds:
  \begin{equation}
    \label{eq:34bis}
    \sfd(u(t),\bar U_\tau(t))\le C_{\lambda,
      T}|\partial\phi|(u_0)\, \tau\quad\forevery t\in [0,T].
  \end{equation}
\end{enumerate}
\end{theorem}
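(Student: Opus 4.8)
The plan is to reduce the whole statement to a single \emph{discrete evolution variational inequality} satisfied by the iterates $U^n_\tau$, and then to pass to the limit through the derivative-free characterization of Theorem \ref{thm:uniqueness}. I begin with~(1). Since $1+\lambda\tau>0$, the convexity modulus $\tfrac1\tau+\lambda=\tfrac{1+\lambda\tau}\tau$ in \eqref{eq:33bis} is strictly positive, so $U\mapsto\Phi(\tau,U^{n-1};U)$ is \emph{strongly} convex along the connecting curves. Evaluating \eqref{eq:33bis} at $s=\tfrac12$ for any two near-minimizers shows that a minimizing sequence is Cauchy; completeness of $X$ and lower semicontinuity of $\phi$ then yield a unique minimizer, the infimum being finite thanks to the quadratic lower bound that geodesic $\lambda$-convexity of $\phi$ entails. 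The recursion then determines $(U^n_\tau)_{n\in\N}\subset D(\phi)$ uniquely.

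The technical cornerstone is the discrete EVI. Fixing the minimizer $U^n_\tau$ and an arbitrary $v\in D(\phi)$, I connect them by a curve $\gamma$ along which $\Phi(\tau,U^{n-1}_\tau;\cdot)$ is $(\tfrac1\tau+\lambda)$-convex, compare the value at $\gamma_s$ with the minimum value attained at $U^n_\tau$, divide by $s$ and let $s\downarrow0$. This produces, for every $v\in D(\phi)$,
\begin{equation*}
  \frac{1+\lambda\tau}2\DistSquare{U^n_\tau}v+\frac12\DistSquare{U^n_\tau}{U^{n-1}_\tau}-\frac12\DistSquare{U^{n-1}_\tau}v\le\tau\bigl(\phi(v)-\phi(U^n_\tau)\bigr),
\end{equation*}
the exact discrete analogue of \eqref{eq:cap1:78bis}. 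From it I read off the a priori bounds mirroring Theorem \ref{thm:main1}: the energy monotonicity already guaranteed by \eqref{eq:cap4:22a}, the discrete dissipation sum $\sum_k\tfrac1{2\tau}\DistSquare{U^k_\tau}{U^{k-1}_\tau}\le\phi(U^0_\tau)-\phi(U^n_\tau)$, a uniform distance bound $\Dist{U^n_\tau}o\le C(\lambda,T)$, and the discrete slope decay $\tau^{-1}\Dist{U^n_\tau}{U^{n-1}_\tau}\le(1+\lambda\tau)^{-(n-1)}\MetricSlope\phi{U^0_\tau}$, which when $U^0_\tau=u_0\in\DomainSlope\phi$ controls the discrete velocity by the initial slope.

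The heart of the argument --- and the step I expect to be the main obstacle --- is the \emph{Cauchy estimate} comparing two discrete solutions with steps $\tau$ and $\sigma$ issued from the same $u_0$, \emph{without} assuming a priori that the flow exists; this is exactly what distinguishes this result from Theorem \ref{thm:Error_Estimate}. Writing $a_{n,m}:=\DistSquare{U^n_\tau}{U^m_\sigma}$, I apply the discrete EVI twice, with $v=U^m_\sigma$ for the $\tau$-scheme and $v=U^n_\tau$ for the $\sigma$-scheme; multiplying by $\sigma$ and $\tau$ respectively and adding, the two $\phi$-terms cancel identically and leave the purely metric recursion
\begin{equation*}
  (\sigma+\tau+2\lambda\sigma\tau)\,a_{n,m}\le\sigma\,a_{n-1,m}+\tau\,a_{n,m-1}.
\end{equation*}
This is a discrete hyperbolic (random-walk) recursion; iterating it expresses $a_{n,m}$ as a binomially weighted average of the boundary terms $a_{k,0}=\DistSquare{U^k_\tau}{u_0}$ and $a_{0,j}=\DistSquare{u_0}{U^j_\sigma}$, which the slope bound of the previous paragraph controls. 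The delicate point is to extract from the binomial weights the sharp order of convergence --- reading the combinatorial sum as the variance $\sim t\,\tau$ of the underlying walk --- so that on the diagonal $t^n_\tau\approx t^m_\sigma\approx t$ one obtains $\Dist{U^n_\tau}{U^m_\sigma}\le C(\lambda,T)\MetricSlope\phi{u_0}(\sqrt\tau+\sqrt\sigma)$, uniformly on $[0,T]$. Hence $(\Pc U\tau)_\tau$ is uniformly Cauchy, which proves~(2).

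For~(3) I pass to the limit in the integrated discrete EVI: summing the inequality across the indices between times $s$ and $t$ with weights $\rme^{\lambda\cdot}$ telescopes it into the derivative-free form \eqref{eq:cap1:78bis}, where lower semicontinuity of $\phi$ handles the $\liminf$ of $\phi(U^n_\tau)$ and the uniform convergence handles the distance terms. Theorem \ref{thm:uniqueness} then certifies that the limit $u$ solves $\EVI X\DistName\phi\lambda$, so $\phi$ admits a $\lambda$-gradient flow and all of Theorem \ref{thm:main1} applies. Finally, the optimal linear rate \eqref{eq:34bis} of~(4) is \emph{not} the generic $O(\sqrt\tau)$ furnished by the Cauchy estimate (nor by Theorem \ref{thm:Error_Estimate} with $\eta=0$): to reach $O(\tau)$ I would, now that the continuous solution is available, run a direct comparison between $u$ and $\Pc U\tau$ exploiting the full convexity of $\Phi$, in the spirit of the error analysis behind \eqref{eq:23}; the gain of the exponent comes precisely from using only the one-sided convexity bound and no upper curvature assumption.
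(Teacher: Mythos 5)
First, a point of reference: the paper itself contains no proof of Theorem \ref{genteo} --- it defers entirely to \cite[Chap.~4]{Ambrosio-Gigli-Savare05} and to \cite{Savare04} --- so your proposal has to be measured against those arguments rather than against anything printed here.

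For claims (1)--(3) your architecture is correct, and your cornerstone is the same as theirs: the minimizer $U^n_\tau$ satisfies, by comparison along the curve joining it to any $v\in \Dom\phi$ with base point $U^{n-1}_\tau$, the discrete inequality
\begin{equation*}
  \frac{1+\lambda\tau}{2}\DistSquare{U^n_\tau}{v}+\frac12\DistSquare{U^n_\tau}{U^{n-1}_\tau}-\frac12\DistSquare{U^{n-1}_\tau}{v}\le \tau\bigl(\phi(v)-\phi(U^n_\tau)\bigr),
\end{equation*}
which is exactly the discrete analogue of \eqref{eq:cap1:78bis} on which \cite{Ambrosio-Gigli-Savare05} is also built. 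From there, however, you take a genuinely different route: the crossed application of this inequality to two discrete solutions, the cancellation of the $\phi$-terms, and the binomial/random-walk iteration is the Crandall--Liggett scheme \cite{Crandall-Liggett71}, essentially the approach of \cite{Mayer98,Ambrosio-Savare-Zambotti09,Clement09}, which the paper explicitly flags as yielding \emph{sub-optimal} estimates. It is a viable way to get (2) (Cauchy property with rate $O(\sqrt{\tau}+\sqrt{\sigma})$ for $u_0\in\DomainSlope\phi$) and then (3), by telescoping the discrete inequality with weights $(1+\lambda\tau)^n$, using the monotonicity of $n\mapsto\phi(U^n_\tau)$ and the lower semicontinuity of $\phi$ to land on \eqref{eq:cap1:78bis}, and invoking Theorem \ref{thm:uniqueness}. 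By contrast, the proof in \cite{Ambrosio-Gigli-Savare05} does not discard the dissipation terms $\DistSquare{U^n_\tau}{U^{n-1}_\tau}$: using De Giorgi's variational interpolation and refined residual estimates it obtains the Cauchy property directly with the optimal rate.

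That difference is precisely where your proposal has a genuine gap: claim (4). The recursion $(\sigma+\tau+2\lambda\sigma\tau)\,a_{n,m}\le \sigma\,a_{n-1,m}+\tau\,a_{n,m-1}$ cannot produce anything better than $O(\sqrt{t\tau})$ on the diagonal --- the variance of the embedded walk really is of that order, and this is the intrinsic limitation of the Crandall--Liggett estimate --- so the linear rate \eqref{eq:34bis} must come from a separate argument, and what you offer for it is a one-sentence plan, not a proof. That comparison ``in the spirit of \eqref{eq:23}'' is the hard core of the theorem in the metric setting: it requires the variational interpolant, a discrete energy identity involving the metric slope, and a Gronwall-type argument for a perturbed \EVIname\ whose residual integrates to $O\bigl(\tau^2\MetricSlopeSquare\phi{u_0}\bigr)$; none of this follows formally from what you have established. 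Two smaller gaps of the same kind: your Cauchy estimate covers only $U^0_\tau=u_0\in\DomainSlope\phi$, whereas (1)--(3) are claimed for every $u_0\in\overline{\Dom\phi}$ (for $u_0\in\Dom\phi$ the boundary terms must instead be controlled by the energy-dissipation bound, and the extension to the closure needs a stability-in-the-initial-datum argument, which is not automatic since the discrete scheme is not known to be contractive in this generality); and the very first minimization step uses the convexity hypothesis with base point $V=u_0\notin\Dom\phi$, which needs an approximation argument because \eqref{eq:33bis} is assumed only for $V\in\Dom\phi$.
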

\renewcommand{\eeta}{\eta}
\noindent The main arguments of the proof
of Theorem \ref{genteo} in a simplified setting can be found in
\cite{Savare04}.
Sub-optimal convergence estimates, inspired by the Crandall-Ligget
approach,
have also be{en} obtained
{in} a different way in \cite{Ambrosio-Savare-Zambotti09}
and in \cite{Clement09}.

\end{subsection}

\begin{subsection}{Generation results for geodesically convex
    functionals in spaces with a semiconcave squared distance}

  In this subsection we consider a geodesically $\lambda$-convex
  functional in a complete metric space $(X,\sfd)$ whose squared distance
  satisfies a \emph{semi-concavity} condition.
  
 \begin{definition}[Semi-concavity of the squared distance
   function]\label{defi:ksc}
   We say that $D(\phi)\subset X$ is a $\mathrm{K}$-SC (Semi-Concave) space if for every geodesic $[0,1]\ni s\mapsto v_s\in D(\phi)$ and for every $w\in D(\phi)$ we have
\begin{equation}
  \label{eq:35}
  \sfd^2(v_s,w)\geq(1-s)\sfd^2(v_0,w)+s\sfd^2(v_1,w)-\mathrm{K}s(1-s)\sfd^2(v_0,v_1)\quad\forevery s\in[0,1].
\end{equation}
\end{definition} 

\noindent
\textbf{Examples of $\mathrm{K}$-SC spaces}

\noindent -\textsc{PC spaces}: $X$ is positively curved (PC) in the Aleksandrov sense if and only if $X$ is $\mathrm{K}$-SC with $\mathrm{K}=1$.

\noindent -\textsc{Aleksandrov spaces}: if $X$ is an Aleksandrov space whose curvature is bounded from below by a negative constant $-k$ and $D=\mathrm{diam}(X)<+\infty$, then $X$ is a $\mathrm{K}$-SC space with $\mathrm{K}=\frac{D\sqrt{k}}{\tanh(D\sqrt{k})}$. This class includes all Riemannian manifolds whose sectional curvature is bounded from below.

\noindent -\textsc{Product and $\rL^2$-spaces}: if $\{(X_i,{\sfd}_i)\}_{i\in\N}$ is a countable collection of $\mathrm{K}$-SC spaces, then the product $\underset{i\in\N}{\prod}X_i$ with the usual product distance is a $\mathrm{K}$-SC space. If $\mu$ is a finite measure on some separable measure space $\Omega$, then $\XX:=\rL^2_{\mu}(\Omega;X)=\bigl\{f:\Omega\freccia X:\,\int_{\Omega}\sfd^2(f(\omega), x_0)\,\d\mu(\omega)<+\infty\text{ for some $x_0\in X$}\bigr\}$ endowed with the distance $d_{\XX}^2(f,g)=\int_{\Omega}\sfd^2(f(\omega),g(\omega))\,\d\mu(\omega)$ is $\mathrm{K}$-SC whenever $X$ is $\mathrm{K}$-SC.

\noindent -\textsc{Wasserstein space}: $(\PP_2(X), W_2)$ is
$\mathrm{K}$-SC if and only if $X$ is $\mathrm{K}$-SC (see the next section).

We will also assume that the (upper) angle between couple of geodesics
emanating from the same point satisfies a suitable condition.

\begin{definition}[Upper angles]Let $x^1$, $x^2$ be two geodesics emanating from the same initial point $x_0:=x^1_0=x_0^2$. Their \emph{upper angle} $\sphericalangle_u(x^1,x^2)\in[0,\pi]$ is defined by 
\begin{equation*}
 \cos(\sphericalangle_u(x^1,x^2)):=\underset{s,t\downarrow0}{\liminf}\,\frac{\sfd^2(x_0,x^1_s)+\sfd^2(x_0,x^2_t)-\sfd^2(x_s^1,x^2_t)}{2{\sfd}(x_0,x^1_s){\sfd}(x_0,x^2_t)}
\end{equation*}
 \end{definition}
\begin{definition}[Local angle condition (LAC)]\label{defi:lac} We say
  that $D(\phi)\subset X$ satisfies the \emph{local angle condition} (LAC) if
  for any triple of geodesics $x^i:[0,1]\to D(\phi)$, $i=1,2,3$,
  emanating from the same initial point $x_0$
  the corresponding angles $\theta^{ij}:=\sphericalangle_u(x^i,x^j)$ satisfy one of the following equivalent conditions:

\noindent \textrm{1. }$\theta^{12}+\theta^{23}+\theta^{31}\leq2\pi$.

\noindent \textrm{2. }There exist a Hilbert space $H$ and vectors $w^i\in H$ such that $\langle w^i,w^j\rangle_{H}=\cos(\theta^{ij})$ for $1\leq i,j\leq3$.

\noindent  \textrm{3. }For any choice of $\xi^1,\xi^2,\xi^3\geq0$ one has that $\overset{3}{\underset{i,j=1}{\sum}}\cos(\theta^{ij})\xi^i\xi^j\geq0$.
\end{definition}
\noindent
\textbf{Examples of {(LAC)} spaces}

\noindent -A \textsc{Banach space} $X$ satisfies (LAC) if and only if $X$ is a \textsc{Hilbert} space.

\noindent -\textsc{Riemannian manifolds and Aleksandrov spaces} with
curvature bounded from below satisfy (LAC). In particular if
\eqref{eq:35} holds with ${\mathrm{K}}=1$ then $X$ satisfies (LAC).

\noindent -\textsc{Product and $\rL^2$-spaces}: $\underset{i\in\N}{\prod}X_i$ satisfies (LAC) if and only if each $(X_i,{\sfd}_i)$ does; $\rL^2_{\mu}(\Omega;X)$ satisfies (LAC) if and only if $X$ satisfies it.

\noindent -\textsc{Wasserstein space}: $\PP_2(X)$ satisfies (LAC) if and only if $X$ does.

\begin{theorem}[Generation theorem for geodesically $\lambda$-convex
  functionals in $\mathrm{K}$-SC and $(LAC)$
  spaces]\label{genteofinal}
  Let $(X,\sfd)$ be a complete metric space and let
  $\phi:(-\infty,+\infty]$ be a proper, l.s.c. and
  $\lambda$-geodesically convex functional.
  \begin{enumerate}
  \item For every $\tau,\eta>0$ with $1+\tau\lambda>0$ and
    $U^0_\taueta=u_0\in D(\phi)$ the
    relaxed minimizing movement scheme \emph{(\ref{problemb}a,b)}
    admits at least one solution
    $(U^n_\taueta)_{n\in \N}$.
    \item
      If $D(\phi)$ is a $\mathrm{K}$-SC space and satisfies the (LAC)
      then the discrete solution $\bar U_\taueta$ converges to $u$ as
      $\tau\downarrow0$ uniformly in each compact interval.
    \item The limit $u$
      is the unique solution of \eqref{eq:EVI}.
      In particular, $\phi$
      admits a $\lambda$-gradient flow according to Definition
      \ref{def:GFlow}, thus satisfying all the properties stated in
      Theorem \ref{thm:main1}.
    \end{enumerate}

\end{theorem}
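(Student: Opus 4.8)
The plan is to follow the classical strategy for generation via the minimizing movement scheme: first establish existence of discrete solutions using the general relaxed framework, then prove a \emph{Cauchy estimate} that controls the distance between two discrete solutions with different time steps, and finally identify the limit as the unique \EVIshort\lambda\ solution via Theorem~\ref{thm:uniqueness}. Part~(1) is immediate: since $\phi$ is $\lambda$-geodesically convex it is quadratically bounded from below (take a geodesic from any point to a fixed $o\in D(\phi)$ and use \eqref{lambdaconv}), so Theorem~\ref{thm:RMMSexist} applies and produces a $(\tau,\eta)$-discrete solution for every $\eta>0$, $\tau^{-1}>-\lambda$.

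The heart of the argument is part~(2), the convergence. The key mechanism is a discrete analogue of the contraction/\EVIshort\lambda\ inequality at the level of a single step. First I would derive a \emph{discrete EVI}: for $U^n_\taueta=\Resolvent{\tau}{U^{n-1}_\taueta}$ and any $V\in D(\phi)$,
\begin{equation*}
  \frac 12\DistSquare{U^n_\taueta}V-\frac 12\DistSquare{U^{n-1}_\taueta}V
  +\frac{1+\lambda\tau}2\DistSquare{U^n_\taueta}V
  \le \tau\bigl(\phi(V)-\phi(U^n_\taueta)\bigr)+R_n,
\end{equation*}
where the remainder $R_n$ comes from the relaxation parameter $\eta$ and the semiconcavity defect. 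To obtain this one compares the value of $\Phi(\tau,U^{n-1}_\taueta;\cdot)$ at $U^n_\taueta$ with its value along the geodesic from $U^n_\taueta$ to $V$, using the minimality condition \eqref{eq:cap1:67a}; the squared-distance term $\DistSquare{\cdot}{U^{n-1}_\taueta}$ is handled through the $\mathrm{K}$-SC inequality \eqref{eq:35}, while $\phi$ is handled through $\lambda$-geodesic convexity \eqref{lambdaconv}. The $\mathrm{K}$-SC condition produces an \emph{error term} proportional to $(\mathrm{K}-1)\DistSquare{U^n_\taueta}V$ that is not present in the Hilbertian case; this is precisely where the (LAC) becomes indispensable.

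The main obstacle—and the technical core of the theorem—is converting these one-step inequalities into a \emph{Cauchy estimate} between two discrete solutions $\Pc U{\tau}$ and $\Pc U{\sigma}$ with different step sizes. The standard device is to consider the doubly-indexed quantity $\DistSquare{U^m_\sigma}{U^n_\tau}$ and apply the discrete EVI in each index separately, summing the resulting recursive inequalities à la Crandall--Liggett or Nochetto--Savar\'e--Verdi. The crucial point is that the three error terms arising from semiconcavity—one for each pair among $U^{n-1}_\tau,U^n_\tau$ and the comparison point—assemble into a quadratic form whose nonnegativity is \emph{exactly} the content of condition~3 in Definition~\ref{defi:lac}: writing the defects in terms of the angles $\theta^{ij}$ between the three geodesics emanating from the common vertex, the (LAC) guarantees $\sum_{i,j}\cos(\theta^{ij})\xi^i\xi^j\ge0$, so these terms can be discarded with the correct sign rather than accumulating. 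This yields a bound of the form $\Dist{\Pc U\tau(t)}{\Pc U\sigma(t)}\le C(\lambda,T)\,\MetricSlope\phi{u_0}\,(\sqrt\tau+\sqrt\sigma)$, uniformly in $\eta$ by the remark following Theorem~\ref{thm:Error_Estimate}, which forces uniform convergence on compact intervals.

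For part~(3) I would pass to the limit in the integrated discrete inequality. Having the Cauchy estimate, the limit curve $u$ exists and is locally Lipschitz away from $t=0$ by the a~priori slope bounds analogous to \eqref{eq:77}. Summing the discrete EVI over steps between $s$ and $t$ and letting $\tau,\eta\down0$ (the remainders $R_n$ vanish in the limit), one recovers
\begin{equation*}
  \frac{\rme^{\lambda(t-s)}}2\DistSquare{u_t}v-\frac 12\DistSquare{u_s}v
  \le \sfE\lambda{t-s}\bigl(\phi(v)-\phi(u_t)\bigr)
  \quad\forevery v\in D(\phi),
\end{equation*}
which is precisely \eqref{eq:cap1:78bis}. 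By Theorem~\ref{thm:uniqueness} this means $u$ solves \EVI X\DistName\phi\lambda, and lower semicontinuity of $\phi$ together with the $s=0$ version of the inequality gives $\lim_{t\down0}u_t=u_0$ as in \eqref{eq:38}. Since \eqref{eq:cap1:81} guarantees uniqueness of \EVIshort\lambda\ solutions, the maps $\FlowName_t(u_0):=u_t$ are well defined, satisfy the semigroup property, and constitute the $\lambda$-gradient flow; all further regularity follows from Theorem~\ref{thm:main1}.
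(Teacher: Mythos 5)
Your proposal cannot be checked against the paper itself, because the paper states Theorem~\ref{genteofinal} without any proof (it is quoted from the reference ``in preparation''), so what follows is an assessment of your argument on its own merits. The overall skeleton — one-step discrete EVI, Crandall--Liggett-type Cauchy estimate between step sizes, identification of the limit via Theorem~\ref{thm:uniqueness} and uniqueness from \eqref{eq:cap1:81} — is the right one, and part~(1) is essentially fine (the quadratic lower bound needs lower semicontinuity of $\phi$ near the base point $o$ in addition to \eqref{lambdaconv}, but this is standard, and then Theorem~\ref{thm:RMMSexist} applies). The genuine gap is in the step you call the heart of the argument: the derivation of the discrete EVI uses the $\mathrm{K}$-SC inequality \eqref{eq:35} with the wrong sign. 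Semi-concavity is a \emph{lower} bound on $s\mapsto\sfd^2(\gamma_s,w)$ along geodesics, whereas comparing $\Phi(\tau,U^{n-1}_\taueta;\cdot)$ at its relaxed minimizer with its values along the geodesic from $U^n_\taueta$ to $V$ via \eqref{eq:cap1:67a} requires an \emph{upper} bound on $\sfd^2(\gamma_s,U^{n-1}_\taueta)$, i.e.\ semi-convexity of the squared distance — exactly the property that $\mathrm{K}$-SC spaces lack. The error term ``proportional to $(\mathrm{K}-1)\DistSquare{U^n_\taueta}V$'' that you posit does not exist: for $\mathrm{K}=1$ it would assert $1$-convexity of $\sfd^2(\cdot,w)$ along geodesics, and the flagship example $(\PP_2(\R^d),W_2)$ with $d\ge 2$ — which is $1$-SC, satisfies (LAC), and is therefore covered by the theorem — admits \emph{no} $\lambda$-convexity of $W_2^2(\cdot,\mu_2)$ along geodesics for any $\lambda\in\R$ (Theorem~\ref{teo:wasspc}). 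This is precisely why Theorem~\ref{genteo} must assume the convexity \eqref{eq:33bis} along a different family of connecting curves (generalized geodesics in the Wasserstein case), and why Theorem~\ref{genteofinal} cannot be obtained by recycling that mechanism.

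The hypotheses in fact play different roles than the ones you assign them. $\mathrm{K}$-SC is used \emph{infinitesimally}: semiconcavity guarantees that $s\mapsto\tfrac12\sfd^2(\gamma_s,w)$ has one-sided derivatives, that these are controlled by cosines of upper angles, and that first-order information at $s=0$ propagates to $s=1$ with a $\mathrm{K}$-dependent correction. Differentiating $\Phi(\tau,U^{n-1}_\taueta;\cdot)$ at the minimizer along geodesics emanating from $U^n_\taueta$ then yields \emph{angular} inequalities, one for each test point; the quadratic-form condition~3 of Definition~\ref{defi:lac}, applied to triples of geodesics emanating from the single vertex $U^n_\taueta$ (towards $U^{n-1}_\taueta$ and towards two test points), is what converts this angular information into the purely metric one-step estimate. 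Your description of the (LAC) step — semiconcavity ``defects'' attached to the pairs among $U^{n-1}_\tau$, $U^n_\tau$ and the comparison point assembling into the quadratic form — does not parse, because those three points do not determine angles at a common vertex: the (LAC) constrains directions issuing from one point, not side lengths of a triangle. So while your parts~(1) and (3) would go through once a correct discrete EVI and Cauchy estimate are available, the central step as written fails, and repairing it requires the first-variation/angle machinery rather than a convexity comparison.
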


% \noindent As for the proof of Theorem \ref{genteofinal}, we just
% mention the fact that the two conditions on $X$ given in Definitions
% \ref{defi:ksc} and \ref{defi:lac} are used to derive
% (a suitable perturbed version of) the discrete EVI \eqref{discrevi2} for the discrete solutions of the minimizing movements scheme. Once \eqref{discrevi2} has been proven, the continuous EVI in the limit is obtained through Cauchy-type estimates as in Theorem \ref{genteo}.

\end{subsection}

\end{section}

\section{Wasserstein spaces and diffusion
    equations}\label{sect:wassgf}
\begin{subsection}{The Wasserstein space}
Here, just to set the notation, we collect some basic definitions and properties of the Wasserstein space which will be used in the sequel. For a more detailed overview on this topic we refer to \cite{Villani03,Villani09}, \cite{Ambrosio-Gigli-Savare05} and \cite{Ambrosio-Savare06}.
\subsubsection*{Transport maps and couplings}
We denote by $X_i$, for some $i\in\N$, a separable and complete metric
space.
$\PP(X)$ is the space of Borel probability measures on $X$.

If $\mu\in\PP(X_1)$ and $\tt:X_1\freccia X_2$ is a Borel map, we denote by $\tt_{\#}\mu\in\PP(X_2)$ the \emph{push-forward of $\mu$ through {$\tt$}}, defined by
\begin{equation}\label{pushfor}
 \tt_{\#}\mu(B):=\mu(\tt^{-1}(B))\quad\forevery B\in \BB(X_2).
\end{equation}

We denote by $\pi^i$, for $i=1,...,n$, the canonical projection operator from a product space $X_1\times...\times X_n$ into $X_i$, defined by
\begin{equation*}
 \pi^i(x_1,...,x_n):=x_i.
\end{equation*}

Given $\mu_1\in\PP(X_1)$ and $\mu_2\in\PP(X_2)$, the class
$\Gamma(\mu_1,\mu_2)$ of \emph{transport plans} or \emph{couplings} between $\mu_1$ and $\mu_2$ is defined by
\begin{equation*}
 \Gamma(\mu_1,\mu_2):=\bigl\{\ggamma\in\PP(X_1\times X_2):\pi^1_{\#}\ggamma=\mu_1,\,\pi^2_{\#}\ggamma=\mu_2\bigr\}.
\end{equation*}

To each couple of measures $\mu_1\in\PP(X_1)$, $\mu_2=\tt_{\#}\mu_1\in\PP(X_2)$ linked by a Borel map $\tt:X_1\freccia X_2$ we can associate the coupling
\begin{equation}\label{indplan}
 \mmu:=(\Id_{X_1}\times \tt)_{\#}\mu_1\in\PP(X_1\times X_2),\quad\text{$\Id_{X_1}$ being the identity map on $X_1$}.
\end{equation}
If $\mmu$ is representable as in \eqref{indplan} we say that $\mmu$ is
induced by $\tt$ and $\tt$ is a \emph{transport map} between $\mu_1$
and $\mu_2$.
Each coupling $\mmu\in\Gamma(\mu_1,\mu_2)$ concentrated on a $\mmu$-measurable graph in $X_1\times X_2$ admits the representation \eqref{indplan} for some $\mu_1$-measurable map $\tt$, which therefore transports $\mu_1$ into $\mu_2$. 
 
\subsubsection*{Wasserstein distance}

Given a complete and separable metric space $(X,\sfd)$ we denote by
$\PP_2(X)$ the space of Borel probability measures with finite
quadratic moment: $\mu\in \PP(X)$ belongs to $\PP_2(X)$ iff
\begin{equation}
  \label{eq:40}
  \int_X \sfd^2(x,x_o)\,\d\mu(x)<+\infty\quad\text{for some (and thus
    any) point }x_o\in X.
\end{equation}
For every couple of measures $\mu,\nu\in\PP_2(X)$ we consider the Kantorovich problem for the cost $\sfd^2$ 
\begin{equation}\label{kant}
  W^2_2(\mu,\nu):=\min\biggl\{\int_{X\times X}\sfd^2(x,y)\,\d\ggamma(x,y):\,\ggamma\in \Gamma(\mu,\nu)\biggr\}.
\end{equation}
It is not difficult to check, by the direct method of calculus of
variations, that the minimum problem
\eqref{kant} admits at least a solution.
The subset of $\Gamma(\mu,\nu)$ given by the \emph{optimal transport
  plans} for \eqref{kant} will be denoted by
$\Gamma_{\mathrm{opt}}(\mu,\nu)$.
Notice that 
if there exists  $\ggamma=(\Id_{X_1}\times \tt)_{\#}\mu\in \Gamma(\mu,\nu)$, we have
\begin{equation*}
 \int_{X\times X}\sfd^2(x,y)\,\d\ggamma(x,y)=\int_X\sfd^2(x,\tt(x))\,\d\mu(x).
\end{equation*}
The quantity $W_2(\mu,\nu)$ defined by {\eqref{kant}} is a distance
between the measures $\mu,\nu\in\PP_2(X)$ which enjoys remarkable properties.
\begin{theorem}Let $(X,{\sfd})$ be a complete and separable metric
  space. Then, $W_2$ defines a distance on $\PP_2(X)$ and
  $(\PP_2(X),W_2)$ is a complete and separable metric space. Moreover, for a given sequence $\{\mu_k\}_{k\in\N}\subset\PP_2(X)$ we have
\begin{equation*}
\underset{k\freccia+\infty}{\lim}W_2(\mu_k,\mu)=0\quad\Leftrightarrow\quad\left\{\begin{aligned}
&\int_{X}f\,\d \mu_k\freccia\int_{X}f\,\d \mu\quad\forevery f\in C^0_b(X)\notag\\
&\underset{R\up+\infty}{\lim}\int_{X\barra B_R(x_0)}\sfd^2(x,x_0)\,\d \mu_k(x)=0\quad\text{uniformly w.r.t. $k\in\N$}.
\end{aligned}\right.
\end{equation*}  
\end{theorem}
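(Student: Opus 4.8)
The plan is to check the metric axioms first, then prove the convergence characterization, and finally deduce completeness and separability from it.

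\emph{$W_2$ is a distance.} Symmetry and nonnegativity are clear, and finiteness on $\PP_2(X)$ follows by testing \eqref{kant} with the product plan $\mu\otimes\nu$ together with the bound $\sfd^2(x,y)\le 2\sfd^2(x,x_0)+2\sfd^2(y,x_0)$. The diagonal plan $(\Id\times\Id)_{\#}\mu$ shows $W_2(\mu,\mu)=0$, while if $W_2(\mu,\nu)=0$ an optimal plan (which exists, as recalled after \eqref{kant}) is concentrated on $\{x=y\}$, forcing $\mu=\nu$. The only substantial axiom is the triangle inequality, for which I would use the gluing lemma: given optimal plans $\gamma_{12}\in\Gamma_{\mathrm{opt}}(\mu_1,\mu_2)$ and $\gamma_{23}\in\Gamma_{\mathrm{opt}}(\mu_2,\mu_3)$, one constructs $\sigma\in\PP(X\times X\times X)$ with $(\pi^1,\pi^2)_{\#}\sigma=\gamma_{12}$ and $(\pi^2,\pi^3)_{\#}\sigma=\gamma_{23}$; then $(\pi^1,\pi^3)_{\#}\sigma\in\Gamma(\mu_1,\mu_3)$, and Minkowski's inequality in $L^2(\sigma)$ applied to $\sfd(x_1,x_3)\le\sfd(x_1,x_2)+\sfd(x_2,x_3)$ gives $W_2(\mu_1,\mu_3)\le W_2(\mu_1,\mu_2)+W_2(\mu_2,\mu_3)$.

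\emph{Convergence, easy direction.} First I would observe that, under narrow convergence $\mu_k\weakto\mu$, the stated uniform tail condition is equivalent to convergence of the second moments $\int_X\sfd^2(x,x_0)\,\d\mu_k\to\int_X\sfd^2(x,x_0)\,\d\mu$ (truncate $\sfd^2$ at level $R$ and let $R\up+\infty$). If $W_2(\mu_k,\mu)\to0$, pick $\gamma_k\in\Gamma_{\mathrm{opt}}(\mu_k,\mu)$; for bounded Lipschitz $f$,
\begin{equation*}
\Big|\int_X f\,\d\mu_k-\int_X f\,\d\mu\Big|
\le \Lip(f)\int_{X\times X}\sfd(x,y)\,\d\gamma_k
\le \Lip(f)\,W_2(\mu_k,\mu)\to0,
\end{equation*}
so $\mu_k\weakto\mu$, since bounded Lipschitz functions determine narrow convergence; moment convergence follows from $|W_2(\mu_k,\delta_{x_0})-W_2(\mu,\delta_{x_0})|\le W_2(\mu_k,\mu)$ and the identity $W_2^2(\rho,\delta_{x_0})=\int_X\sfd^2(x,x_0)\,\d\rho$.

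\emph{Convergence, hard direction.} This is the crux. Assuming $\mu_k\weakto\mu$ with converging second moments, I would invoke Skorokhod's representation theorem (valid since $X$ is Polish) to realize $\mu_k,\mu$ as the laws of random elements $\xi_k,\xi$ on a common probability space with $\xi_k\to\xi$ almost surely. The pair $(\xi_k,\xi)$ is a coupling, so $W_2^2(\mu_k,\mu)\le\E[\sfd^2(\xi_k,\xi)]$. Now $\sfd^2(\xi_k,\xi)\to0$ a.s., and since the nonnegative variables $\sfd^2(\xi_k,x_0)$ converge a.s. and in mean to $\sfd^2(\xi,x_0)$ (Scheff\'e's lemma, using the moment convergence), the domination $\sfd^2(\xi_k,\xi)\le 2\sfd^2(\xi_k,x_0)+2\sfd^2(\xi,x_0)$ makes $\{\sfd^2(\xi_k,\xi)\}$ uniformly integrable; Vitali's theorem then yields $\E[\sfd^2(\xi_k,\xi)]\to0$. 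The difficulty here is precisely that narrow convergence cannot by itself pass the unbounded integrand $\sfd^2$ through the limit, and it is the uniform integrability encoded in the tail hypothesis that repairs this.

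\emph{Completeness and separability.} With the characterization in hand, separability follows by taking a countable dense $D\subset X$ and showing that finitely supported measures with atoms in $D$ and rational weights are $W_2$-dense: one first approximates any $\mu\in\PP_2(X)$ by its renormalized restriction to a large ball (controlling the discarded mass by the tail estimate) and then discretizes onto a finite net drawn from $D$. For completeness, given a $W_2$-Cauchy sequence $(\mu_k)$, I would show it is tight---the finitely many initial terms are tight on the Polish space $X$, and $W_2$-proximity (which dominates a Prokhorov-type distance, as $W_1\le W_2$) propagates tightness to the tail---so Prokhorov's theorem yields a narrow limit $\mu$ along a subsequence, with $\mu\in\PP_2(X)$ by Fatou and the uniform second-moment bound of a Cauchy sequence. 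The Cauchy property forces uniform integrability of the second moments, so the characterization gives $W_2$-convergence along the subsequence, and a routine Cauchy argument promotes it to convergence of the whole sequence.
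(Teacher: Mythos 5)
The paper itself gives no proof of this theorem: it is recalled as a known fact, with the reader referred to \cite{Villani03,Villani09}, \cite{Ambrosio-Gigli-Savare05} and \cite{Ambrosio-Savare06}. So there is no internal argument to compare against; judged on its own merits, your proof is correct and follows the classical route. The metric axioms via the gluing lemma and Minkowski's inequality in $L^2(\sigma)$, the easy implication via Cauchy--Schwarz on an optimal plan together with the identification (under narrow convergence) of the uniform tail condition with convergence of second moments, and the completeness/separability arguments are all standard. For the hard implication you use Skorokhod representation plus Scheff\'e and Vitali; this is a legitimate and well-known alternative to the argument of \cite[Chap.~7]{Ambrosio-Gigli-Savare05}, which instead works directly with the plans (narrow compactness and stability of optimal plans, plus uniform integrability of $\sfd^2$ supplied by the tail hypothesis) and so never leaves the measure-theoretic setting; both routes need exactly the standing assumption that $X$ is Polish.

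Two steps in your completeness argument are asserted rather than proved and are the only places where real care is needed. First, ``$W_2$-proximity propagates tightness to the tail'' hides a genuine subtlety: the $\epsilon$-neighbourhood of a compact set need not be compact in a Polish space, so the standard argument must run the coupling/Markov estimate at every scale $2^{-j}$, producing at each scale a finite union of balls of uniformly large measure, and then take the closed intersection over $j$ to obtain a totally bounded, hence compact, set. Second, the claim that a $W_2$-Cauchy sequence has uniformly integrable second moments needs a quantitative propagation estimate, e.g.
\begin{equation*}
  \Big(\int_{X\setminus B_{2R}(x_0)}\sfd^2(x,x_0)\,\d\mu(x)\Big)^{1/2}
  \le 2\,W_2(\mu,\nu)+\Big(\int_{X\setminus B_{R}(x_0)}\sfd^2(y,x_0)\,\d\nu(y)\Big)^{1/2},
\end{equation*}
which follows from Minkowski's inequality in $L^2(\ggamma)$ for $\ggamma\in\Gamma_{\mathrm{opt}}(\mu,\nu)$, splitting according to whether $y\in B_R(x_0)$ and noting that on $\{x\notin B_{2R}(x_0),\ y\in B_R(x_0)\}$ one has $\sfd(y,x_0)\le R\le\sfd(x,y)$. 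With this inequality the tails of the whole Cauchy sequence are controlled by the tail of a single $\mu_N$ plus an arbitrarily small error, and your argument closes. Both facts are true and standard, so these are gaps of presentation, not errors.
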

The metric space $(\PP_2(X),W_2)$ is called the ($L^2$-)
\emph{Wasserstein space} on $X$.
When $X=\Rd$ we denote by $\PP_2^a(\R^d)$ the subset of $\PP_2(\R^d)$ defined by
\begin{equation}\label{pa2}
 \PP_2^a(\R^d):=\{\mu\in\PP_2(\R^d): \,\mu\ll\LL^d\}.
\end{equation}

Here we recall the following basic result on the existence and uniqueness of optimal transport plans induced by maps (which are then called \emph{optimal transport maps}) in the case in which the initial measure $\mu$ belongs to $\PP_2^a(\R^d)$.

\begin{theorem}[Existence and uniqueness of optimal transport maps,
  \cite{Knott-Smith84,Brenier91}]
    For any $\mu\in \PP_2^a(\R^d)$ and $\nu\in\PP_2(\R^d)$,
    Kantorovich's optimal transport problem \eqref{kant} has a unique
    solution $\ggamma$,
    which is concentrated on the graph of a transport map $\tt$.
    $\tt$ is the unique minimizer of Monge's optimal transport problem on $\R^d$ for the Euclidean distance
    \begin{equation*}
      \min\biggl\{\int_{\R^d}|x-\rr(x)|^2\,\d \mu(x):\,\rr_{\#}\mu=\nu\biggr\}.
    \end{equation*}
    The map $\tt$ is cyclically monotone and there exists a convex
    open set $\Omega\subset \R^d$ with $\mu(\R^d\setminus \Omega)=0$
    and a convex function $\phi:\Omega \to \R$ such that
    $\tt(x)=\nabla\phi(x)$ for $\mu$-a.e.\ $x\in \Omega$.
\end{theorem}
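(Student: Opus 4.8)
The plan is to reproduce Brenier's classical argument: extract the convex potential from the cyclical monotonicity of an optimal plan, then use the absolute continuity of $\mu$ to promote it to a map. First I would fix any optimal plan $\ggamma\in\Gamma_{\mathrm{opt}}(\mu,\nu)$, whose existence is guaranteed by the direct method recalled above. The crucial structural fact is that $\supp\ggamma$ is \emph{$c$-cyclically monotone} for the quadratic cost $c(x,y)=|x-y|^2$: for every finite family $(x_i,y_i)_{i=1}^N\subset\supp\ggamma$ and every permutation $\sigma$ one has $\sum_i|x_i-y_i|^2\le\sum_i|x_i-y_{\sigma(i)}|^2$. I would prove this by contradiction, turning a hypothetical violating cycle into a competitor plan, obtained by redistributing mass on small disjoint neighborhoods of the points $(x_i,y_i)$ according to the cyclic shift, that strictly lowers the transport cost. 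Expanding the square and cancelling the terms $|x_i|^2$ and $|y_i|^2$, this is equivalent to $\sum_i\la x_i,y_i\ra\ge\sum_i\la x_i,y_{\sigma(i)}\ra$, i.e.\ to ordinary cyclical monotonicity of $\supp\ggamma$ as a subset of $\R^d\times\R^d$.

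By Rockafellar's theorem, a cyclically monotone set is contained in the graph of the subdifferential $\partial\phi$ of a proper, lower semicontinuous convex function $\phi:\R^d\to(-\infty,+\infty]$, which may be built explicitly as a supremum of affine functions associated to finite chains in $\supp\ggamma$. Hence $\ggamma$ is concentrated on $\{(x,y):y\in\partial\phi(x)\}$, so that $\partial\phi(x)\neq\emptyset$ for $\mu$-a.e.\ $x$ and $\mu$ is carried by the domain of $\phi$. Letting $\Omega$ be the interior of this (convex) domain, on $\Omega$ the convex function $\phi$ is locally Lipschitz, hence differentiable $\LL^d$-a.e.; since $\partial\Omega$ is $\LL^d$-negligible and $\mu\ll\LL^d$, the map $\tt:=\nabla\phi$ is well defined $\mu$-a.e.\ and $\partial\phi(x)=\{\nabla\phi(x)\}$ there. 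Consequently $\ggamma$ is concentrated on the graph of $\tt$, i.e.\ $\ggamma=(\Id\times\tt)_{\#}\mu$, whence $\tt_{\#}\mu=\nu$, $\tt$ is a transport map, and being $\nabla\phi$ with $\phi$ convex it is cyclically monotone.

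Since the Kantorovich cost bounds the Monge cost from below and is here attained by a plan induced by $\tt$, the map $\tt$ is also a minimizer of the Monge problem with the same value. For uniqueness I would use an averaging argument: if $\ggamma_1,\ggamma_2$ were two optimal plans, then $\tfrac12(\ggamma_1+\ggamma_2)$ is again optimal, hence concentrated on the graph of the subdifferential of some convex potential, which is single-valued $\mu$-a.e.; therefore both $\ggamma_1$ and $\ggamma_2$ coincide with $(\Id\times\nabla\phi)_{\#}\mu$, giving $\ggamma_1=\ggamma_2$ and the uniqueness of the Monge minimizer as well.

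The main obstacle is the first step, the $c$-cyclical monotonicity of $\supp\ggamma$: it is the only genuinely measure-theoretic point, since converting a finite improving cycle into a global mass rearrangement requires selecting disjoint neighborhoods of positive $\ggamma$-measure and invoking the optimality of $\ggamma$ with care. The differentiability step, by contrast, relies decisively on $\mu\ll\LL^d$ through the a.e.\ differentiability of convex functions, which is precisely where the hypothesis $\mu\in\PP_2^a(\R^d)$ is used.
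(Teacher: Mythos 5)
The paper itself gives no proof of this statement: it is recalled as a classical result and attributed to \cite{Knott-Smith84,Brenier91}, so there is nothing internal to compare against. Your argument is the now-standard textbook proof (the one in, e.g., \cite{Ambrosio-Gigli-Savare05,Villani03}), and it is correct in outline: optimality forces $c$-cyclical monotonicity of $\supp\ggamma$, which for the quadratic cost reduces to Rockafellar cyclical monotonicity; Rockafellar's theorem yields the convex potential $\phi$; the hypothesis $\mu\ll\LL^d$ enters exactly where you say it does, through the $\LL^d$-a.e.\ differentiability of the locally Lipschitz convex function $\phi$ on $\Omega=\interior(D(\phi))$ together with the $\LL^d$-negligibility of $\partial(D(\phi))$ (one should also note that $\Omega\neq\emptyset$, since otherwise $D(\phi)$ lies in a hyperplane, contradicting $\mu(D(\phi))=1$ and $\mu\ll\LL^d$); and the averaging trick $\frac12(\ggamma_1+\ggamma_2)$ correctly delivers uniqueness of the plan and hence of the Monge minimizer. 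The one genuinely delicate point is the one you flag: turning a finite improving cycle in $\supp\ggamma$ into an admissible competitor plan of strictly smaller cost requires a careful construction (restricting $\ggamma$ to product neighborhoods of the $(x_i,y_i)$, normalizing, and gluing the cyclically shifted pieces so that the marginals are preserved), or alternatively one can bypass it entirely via Kantorovich duality. It is worth noting that this route differs from the original arguments of the cited papers (Brenier proceeded via duality and polar factorization), but it is the cleaner and more robust proof, which is precisely why it has become standard.
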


\subsubsection*{Geodesics and curvature properties of $(\PP_2(\R^d),W_2)$.}

\begin{theorem}[Geodesics in the Wasserstein space]
Given $\mu,\nu\in\PP_2(\R^d)$ and $\ggamma\in\Gamma_{\mathrm{opt}}(\mu,\nu)$, the curve 
\begin{equation*}
 [0,1]\ni s\mapsto \mu_s=\bigl((1-s)\pi^1+s\pi^2\bigr)_{\#}\ggamma.
\end{equation*}
is a constant speed geodesic between $\mu$ and $\nu$, i.e. it satisfies 
\begin{equation*}
 W_2(\mu_s,\mu_t)=|s-t|W_2(\mu_0,\mu_1)\quad\forevery s,t\in[0,1].
\end{equation*}
Vice versa, any constant speed geodesic between $\mu$ and $\nu$ can be built in this way.

\noindent If $\ggamma=(\Id\times\tt)_{\#}\mu$, then
\begin{equation*}
 \mu_s=\bigl((1-s)\Id+s\tt\bigr)_{\#}\mu,\quad s\in[0,1].
\end{equation*}
In particular, $(\PP_2(\R^d),W_2)$ is a geodesic space.
\end{theorem}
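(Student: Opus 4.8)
The plan is to prove the two implications separately and then read off the two concluding statements. For the \emph{direct} implication I would first produce, for every $s,t\in[0,1]$, an explicit competitor coupling of $\mu_s$ and $\mu_t$. Writing $r^s(x,y):=(1-s)x+sy$ so that $\mu_s=r^s_{\#}\ggamma$, the map $R^{s,t}:=(r^s,r^t)$ pushes $\ggamma$ forward to a plan in $\Gamma(\mu_s,\mu_t)$; since $r^s(x,y)-r^t(x,y)=(t-s)(x-y)$ and $\ggamma$ is optimal,
\[
W_2^2(\mu_s,\mu_t)\le\int_{\Rd\times\Rd}\big|r^s(x,y)-r^t(x,y)\big|^2\,\d\ggamma=(t-s)^2\int_{\Rd\times\Rd}|x-y|^2\,\d\ggamma=(t-s)^2\,W_2^2(\mu,\nu).
\]
This gives the upper bound $W_2(\mu_s,\mu_t)\le|t-s|\,W_2(\mu,\nu)$, and the triangle inequality then forces it to be an equality: for $0\le s\le t\le1$,
\[
W_2(\mu_0,\mu_1)\le W_2(\mu_0,\mu_s)+W_2(\mu_s,\mu_t)+W_2(\mu_t,\mu_1)\le\big(s+(t-s)+(1-t)\big)W_2(\mu_0,\mu_1),
\]
so every intermediate inequality is saturated; in particular $W_2(\mu_s,\mu_t)=(t-s)W_2(\mu_0,\mu_1)$, which (recalling $\mu_0=\mu$, $\mu_1=\nu$) is the constant speed geodesic property.

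For the \emph{converse}, let $(\mu_s)$ be a constant speed geodesic and set $W:=W_2(\mu_0,\mu_1)$. The core idea, at a fixed $s\in(0,1)$, combines the gluing lemma with the equality case of the triangle inequality in the Hilbert space $L^2$. I would pick $\ggamma_1\in\Gamma_{\mathrm{opt}}(\mu_0,\mu_s)$ and $\ggamma_2\in\Gamma_{\mathrm{opt}}(\mu_s,\mu_1)$ and glue them into $\aalpha\in\PP(\Rd\times\Rd\times\Rd)$ with $(\pi^{12})_{\#}\aalpha=\ggamma_1$ and $(\pi^{23})_{\#}\aalpha=\ggamma_2$. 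Viewing $(x,y,z)\mapsto x-y$ and $(x,y,z)\mapsto y-z$ as elements of $L^2(\aalpha;\Rd)$, their norms equal $sW$ and $(1-s)W$ by the geodesic hypothesis, while their sum $x-z$ satisfies $W\le\|x-z\|_{L^2(\aalpha)}$ because $(\pi^{13})_{\#}\aalpha$ is a coupling of $\mu_0,\mu_1$. These bounds squeeze everything to equality, and the Hilbert-space equality case forces $y-z$ and $x-y$ to be positively collinear $\aalpha$-a.e.; matching the norms yields $y=(1-s)x+sz$ for $\aalpha$-a.e.\ $(x,y,z)$. Hence $\ggamma:=(\pi^{13})_{\#}\aalpha\in\Gamma_{\mathrm{opt}}(\mu_0,\mu_1)$ and $\mu_s=(\pi^2)_{\#}\aalpha=((1-s)\pi^1+s\pi^2)_{\#}\ggamma$.

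The main obstacle is that this plan a priori depends on $s$, whereas the statement requires a single $\ggamma$ valid for all $s$ simultaneously. To resolve this I would glue along a whole dyadic partition: for $N=2^m$ glue optimal plans between consecutive $\mu_{k/N}$ into $\aalpha_N\in\PP(\Rd^{\,N+1})$. The increments $x_{k-1}-x_k$, seen in $L^2(\aalpha_N;\Rd)$, each have norm $(1/N)W$, and the iterated triangle inequality together with optimality of the outer projection again forces equality; the Hilbert equality case then makes all increments coincide $\aalpha_N$-a.e., giving the segment relation $x_k=(1-k/N)x_0+(k/N)x_N$ and showing that $\ggamma_N:=(\pi^{0,N})_{\#}\aalpha_N\in\Gamma_{\mathrm{opt}}(\mu_0,\mu_1)$ reproduces every $\mu_{k/N}$. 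Since the marginals are fixed, $\{\ggamma_N\}$ is tight, so I may extract a narrowly convergent subsequence $\ggamma_{2^m}\to\ggamma\in\Gamma_{\mathrm{opt}}(\mu_0,\mu_1)$; as $\ggamma\mapsto((1-s)\pi^1+s\pi^2)_{\#}\ggamma$ is narrowly continuous, I obtain $\mu_s=((1-s)\pi^1+s\pi^2)_{\#}\ggamma$ for every dyadic $s$, hence for all $s\in[0,1]$ by $W_2$-continuity of both sides (the right-hand side being a geodesic by the direct part).

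The two final assertions are then immediate. When $\ggamma=(\Id\times\tt)_{\#}\mu$, composing push-forwards and using $\big((1-s)\pi^1+s\pi^2\big)\circ(\Id\times\tt)=(1-s)\Id+s\tt$ gives $\mu_s=((1-s)\Id+s\tt)_{\#}\mu$ at once. Finally, since $\Gamma_{\mathrm{opt}}(\mu,\nu)\neq\emptyset$ for every $\mu,\nu\in\PP_2(\Rd)$ by the existence of optimal plans recalled above, the direct implication produces a constant speed geodesic joining any two measures, so $(\PP_2(\Rd),W_2)$ is a geodesic space.
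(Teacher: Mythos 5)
Your proof is correct. Note, however, that the paper itself states this theorem \emph{without} proof: it is recalled background material, with the reader implicitly referred to \cite{Ambrosio-Gigli-Savare05} (Theorem 7.2.2 there), so there is no internal argument to compare against; what you wrote is essentially the standard proof from the literature. The direct implication (pushing $\ggamma$ forward under $(r^s,r^t)$ to get the Lipschitz bound, then saturating the triangle inequality) is exactly the usual argument. For the converse you correctly identified the one genuinely delicate point, which a naive argument misses: the gluing/equality-case argument at a fixed $s$ produces a plan depending on $s$, whereas the theorem asks for a single $\ggamma\in\Gamma_{\mathrm{opt}}(\mu,\nu)$ representing \emph{every} $\mu_s$ simultaneously. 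Your fix --- gluing along a dyadic partition, using equality in the iterated triangle inequality in $L^2(\aalpha_N;\Rd)$ to force all increments to coincide a.e., then extracting a narrow limit of the resulting optimal plans by tightness and concluding for all $s$ by density and $W_2$-continuity of both curves --- is sound: the limit plan has the right marginals, is optimal by lower semicontinuity of the quadratic cost, and the map $\ggamma\mapsto\bigl((1-s)\pi^1+s\pi^2\bigr)_{\#}\ggamma$ is indeed narrowly continuous. Two minor points you should make explicit in a polished write-up: the degenerate case $W_2(\mu,\nu)=0$ (where the collinearity argument is vacuous but the statement is trivial), and the fact that the equality case of Cauchy--Schwarz in the Hilbert space $L^2(\aalpha)$ yields collinearity with a \emph{constant} scalar, which is what upgrades ``positively collinear $\aalpha$-a.e.'' to the exact identity $y=(1-s)x+sz$ $\aalpha$-a.e. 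The two concluding assertions are, as you say, immediate from composition of push-forwards and from the existence of optimal plans.
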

% We recall that a metric space $(X,d)$ is a \emph{lenght space} if and only if $\forevery x_0,x_1\in X$
% \begin{align}\label{def:lenghtspace}
%  d(x_0,x_1)=\inf\biggl\{L(u):=&\underset{J\in\N}{\sup}\:\underset{0=t_0\leq t_1\leq...\leq t_{J}=1}{\sup}\:\overset{J}{\underset{j=1}{\sum}}d(u(t_j),u(t_{j-1})):\,\notag\\
% &u:[0,1]\freccia X \text{ continuous, }u(0)=x_0, u(1)=x_1\biggr\},
% \end{align}
% where $L(u)$ is the metric lenght of the curve $u$. 

% \noindent When the infimum in \eqref{def:lenghtspace} is achieved $\forevery x_0,x_1\in X$, $X$ is said to be a \emph{geodesic space}.

In view of the application to the Wasserstein framework of the theory of gradient flows in metric spaces developed in the previous section, we recall the following theorem (see Theorem 7.3.2 and Example 7.3.3 of \cite{Ambrosio-Gigli-Savare05})
\begin{theorem}[$(\PP_2(\R^d),W_2)$ is a PC-space]\label{teo:wasspc}
For any $\mu_0,\mu_1,\mu_2\in\PP_2(\R^d)$ we have
\begin{equation}\label{1conc} W_2^2(\mu_s,\mu_2)\geq(1-s)W_2^2(\mu_0,\mu_2)+sW_2^2(\mu_1,\mu_2)-s(1-s)W_2^2(\mu_0,\mu_1)\quad\forevery s\in[0,1],
\end{equation}
where $\mu_s$ is any constant speed geodesic between $\mu_0$ and $\mu_1$.

\noindent Moreover, when $d\geq2$ there is no constant $\lambda\in\R$ such that $W_2^2(\cdot,\mu_2)$ is $\lambda$-convex along geodesics.
\end{theorem}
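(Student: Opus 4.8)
The plan is to prove the two assertions by completely different mechanisms: the semiconcavity estimate \eqref{1conc} will come from an exact pointwise identity on Euclidean segments together with a gluing of optimal plans, whereas the failure of $\lambda$-convexity will be exhibited by an explicit two-point counterexample in $\R^2$.

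For \eqref{1conc} the starting point is the elementary identity, valid for the segment $x_s:=(1-s)x_0+s x_1$ and any $z\in\Rd$,
\begin{equation*}
  |x_s-z|^2=(1-s)|x_0-z|^2+s|x_1-z|^2-s(1-s)|x_0-x_1|^2,
\end{equation*}
which holds as an \emph{equality} (it is the expansion of $x_s-z=(1-s)(x_0-z)+s(x_1-z)$). First I would represent the geodesic through an optimal plan: by the characterization of Wasserstein geodesics, $\mu_s=(p^s)_{\#}\gamma$ with $\gamma\in\Gamma_{\mathrm{opt}}(\mu_0,\mu_1)$ and $p^s(x_0,x_1)=(1-s)x_0+s x_1$. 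Then I would pick an \emph{optimal} plan $\sigma\in\Gamma_{\mathrm{opt}}(\mu_s,\mu_2)$ and glue it with $\gamma$ along the common marginal $\mu_s$, obtaining a three-marginal measure $\alpha$ on $\Rd\times\Rd\times\Rd$ (coordinates $x_0,x_1,x_2$) whose $(x_0,x_1)$-marginal is $\gamma$ and whose image under $(x_0,x_1,x_2)\mapsto(p^s(x_0,x_1),x_2)$ is $\sigma$.

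Integrating the identity (with $z=x_2$) against $\alpha$ then gives
\begin{equation*}
  W_2^2(\mu_s,\mu_2)=(1-s)\!\int|x_0-x_2|^2\,\d\alpha+s\!\int|x_1-x_2|^2\,\d\alpha-s(1-s)\!\int|x_0-x_1|^2\,\d\alpha .
\end{equation*}
The left-hand side is exactly $W_2^2(\mu_s,\mu_2)$ because $\sigma$ is optimal; the last integral equals $W_2^2(\mu_0,\mu_1)$ because $\gamma$ is optimal, so the negatively weighted term is an exact equality and causes no trouble; and the first two integrals are the costs of the couplings $\pi^{02}_{\#}\alpha\in\Gamma(\mu_0,\mu_2)$ and $\pi^{12}_{\#}\alpha\in\Gamma(\mu_1,\mu_2)$, hence bounded \emph{below} by $W_2^2(\mu_0,\mu_2)$ and $W_2^2(\mu_1,\mu_2)$. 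Their weights $(1-s),s\ge0$ preserve the inequality direction, so \eqref{1conc} follows. The only delicate point is the sign bookkeeping: since we need a \emph{lower} bound on a minimization problem, it is essential to start from the optimal $\sigma$ (making the left side exact) and from the optimal $\gamma$ (making the negatively weighted term exact), spending the inequalities only on the two positively weighted terms.

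For the second assertion I would exhibit a one-parameter family of counterexamples in $\R^2$ built from symmetric two-point measures $\tfrac12\delta_x+\tfrac12\delta_{-x}$, for which $W_2^2$ reduces to $\min\bigl(|x-x'|^2,|x+x'|^2\bigr)$ (the squared metric of the cone $\R^2/\{\pm1\}$, whose tip carries concentrated positive curvature). Fix $\eps\in(0,1)$ and set $x_0=(1,\eps)$, $x_1=(1,-\eps)$, $\mu_2=\tfrac12\delta_{(0,1)}+\tfrac12\delta_{(0,-1)}$; the optimal plan between $\mu_0,\mu_1$ is the short matching, so $W_2^2(\mu_0,\mu_1)=4\eps^2$ and the geodesic is $\mu_s=\tfrac12\delta_{x_s}+\tfrac12\delta_{-x_s}$ with $x_s=(1,\eps(1-2s))$. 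Along it $W_2^2(\mu_s,\mu_2)=\min\bigl(|x_s-(0,1)|^2,|x_s+(0,1)|^2\bigr)$, and the optimal matching to $\mu_2$ \emph{flips} exactly at $s=\tfrac12$ (where $x_s\cdot(0,1)$ changes sign), producing a downward corner: the concavity defect at $s=\tfrac12$ equals $\tfrac12W_2^2(\mu_0,\mu_2)+\tfrac12W_2^2(\mu_1,\mu_2)-W_2^2(\mu_{1/2},\mu_2)=-2\eps+\eps^2$. Any $\lambda$-convexity would force this to be $\ge\tfrac\lambda2\cdot\tfrac14\cdot4\eps^2=\tfrac\lambda2\eps^2$, i.e. $\lambda\le 2-4/\eps$; letting $\eps\downarrow0$ shows no finite $\lambda$ can work, and the example embeds into any $\R^d$, $d\ge2$, by appending zero coordinates. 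The main obstacle is geometric: because of the $\pm$ identification, spread-out endpoints tend to collapse to nearly the same measure, so the nontrivial step is to keep the two endpoints genuinely distinct \emph{while} the optimal matching to the third measure reverses along the connecting geodesic; once this flip is arranged, normalizing by the small quantity $W_2^2(\mu_0,\mu_1)$ makes the concavity blow up.
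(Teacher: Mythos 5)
Your proof is correct. Note first that the paper does not actually prove Theorem \ref{teo:wasspc}: it is recalled from \cite[Theorem 7.3.2 and Example 7.3.3]{Ambrosio-Gigli-Savare05}, so the comparison is with that standard argument, and your route essentially coincides with it. For \eqref{1conc} your sign bookkeeping is exactly the right (and the standard) one: write $\mu_s=(p^s)_\#\gamma$ with $\gamma\in\Gamma_{\mathrm{opt}}(\mu_0,\mu_1)$ (legitimate for an \emph{arbitrary} geodesic by the ``vice versa'' part of the geodesic characterization recalled in the paper), glue an optimal $\sigma\in\Gamma_{\mathrm{opt}}(\mu_s,\mu_2)$ to $\gamma$ by disintegrating $\sigma$ with respect to its $\mu_s$-marginal, and integrate the exact Euclidean identity, so that optimality of $\sigma$ makes the left-hand side exact, optimality of $\gamma$ makes the negatively weighted term exact, and the only inequalities used, $\int|x_i-x_2|^2\,\d\alpha\ge W_2^2(\mu_i,\mu_2)$ for $i=0,1$, carry the nonnegative weights $1-s$ and $s$. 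For the failure of $\lambda$-convexity, your two-point example is complete and correct (the example in \cite{Ambrosio-Gigli-Savare05} is of the same discrete nature): the computations check out, since $W_2^2(\mu_s,\mu_2)=2-2\eps|1-2s|+\eps^2(1-2s)^2$ has midpoint defect $-2\eps+\eps^2$, which forces $\lambda\le 2-4/\eps\to-\infty$ as $\eps\downarrow0$. One small point you should make explicit: since the paper's definition of geodesic $\lambda$-convexity only requires $\lambda$-convexity along \emph{some} connecting geodesic, you must rule out \emph{every} geodesic between $\mu_0$ and $\mu_1$; this is immediate in your example because for $\eps<1$ the optimal plan (the ``short'' matching, of cost $4\eps^2<4$) is unique, and every constant speed geodesic is induced by an optimal plan, so the geodesic you computed along is the only one. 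With that one sentence added, the argument is airtight.
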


According to Aleksandrov's notion of curvature for metric spaces, \eqref{1conc} can be interpreted by saying that the Wasserstein space {is} a \emph{positively curved }metric space (or PC-space). 

\noindent Then, the square of the Wasserstein distance along geodesics does not satisfy the $1$-convexity assumption \eqref{1conv}, which would be the most natural to prove the generation Theorem \ref{genteo} for the gradient flows of $\lambda$-convex functionals.

However, the theory developed in the previous section allows for a great flexibility in the choice of the connecting curves. In part{i}cular, for the Wasserstein space on $\R^d$ the $1$-convexity property \eqref{1conv} is satisfied along the following class of curves: 

\begin{definition}[Generalized geodesics] A generalized geodesic joining $\mu_2$ to $\mu_3$ (with base point $\mu_1$) is a curve of the type
\begin{equation}\label{gengeod}
 [0,1]\ni s\mapsto\mu_s^{2\freccia3}:=((1-s)\pi^2+s\pi^3)_{\#}\mmu,
\end{equation}
where
\begin{align}\label{mmu}
\mmu\in\Gamma(\mu_1,\mu_2,\mu_3)\quad\text{and}\quad {\pi^{1,2}}_{\#}\mmu\in\Gamma_{\mathrm{opt}}(\mu_1,\mu_2),\quad {\pi^{1,3}}_{\#}\mmu\in\Gamma_{\mathrm{opt}}(\mu_1,\mu_3).
\end{align}
Here $\Gamma(\mu_1,\mu_2,\mu_3):=\bigl\{\ggamma\in\PP(\R^d\times\R^d\times\R^d):\,\pi^i_{\#}\ggamma=\mu^i,\,i=1,2,3\bigr\}$ and $\pi^{i,j}:\R^d\times\R^d\times\R^d\freccia\R^d\times\R^d$ is the projection on the $i$-th and $j$-th coordinate. 
\end{definition}
\begin{proposition}[$1$-convexity of the Wasserstein distance along generalized geodesics] Let $\mu_1,\mu_2,\mu_3\in\PP_2(\R^d)$ and let $\mmu\in\Gamma(\mu_1,\mu_2,\mu_3)$ such that $\pi^{1,i}_{\#}\mmu\in\Gamma_{\mathrm{opt}}(\mu_1,\mu_i)$, for $i=2,3$. Then,
\begin{align*}
W_2^2\bigl(\mu_s^{2\freccia3},\mu_1\bigr)\leq(1-s)W_2^2(\mu_1,\mu_2)+sW_2^2(\mu_1,\mu_3)-s(1-s)W_2^2(\mu_2,\mu_3)\quad\forevery s\in[0,1].
\end{align*}
In particular, the function $\frac{1}{2}W_2^2(\mu_1,\cdot)$ is $1$-convex along generalized geodesics.
\end{proposition}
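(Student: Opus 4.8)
The plan is to bound $W_2^2(\mu_s^{2\freccia3},\mu_1)$ from above by exhibiting an explicit admissible coupling between $\mu_1$ and $\mu_s^{2\freccia3}$, built directly from the three-marginal plan $\mmu$. First I would introduce the map $\tt_s(x_1,x_2,x_3):=\bigl(x_1,(1-s)x_2+s x_3\bigr)$ and set $\gamma_s:=(\tt_s)_\#\mmu$. Since $\pi^1_\#\mmu=\mu_1$ and, by \eqref{gengeod}, $((1-s)\pi^2+s\pi^3)_\#\mmu=\mu_s^{2\freccia3}$, the measure $\gamma_s$ lies in $\Gamma(\mu_1,\mu_s^{2\freccia3})$, so that
\begin{equation*}
W_2^2(\mu_1,\mu_s^{2\freccia3})\le\int_{(\R^d)^3}\bigl|x_1-((1-s)x_2+s x_3)\bigr|^2\,\d\mmu .
\end{equation*}

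The key step is the pointwise Euclidean identity obtained by writing $x_1-((1-s)x_2+s x_3)=(1-s)(x_1-x_2)+s(x_1-x_3)$ and applying the elementary relation $|(1-s)a+sb|^2=(1-s)|a|^2+s|b|^2-s(1-s)|a-b|^2$ with $a=x_1-x_2$ and $b=x_1-x_3$, so that $a-b=x_3-x_2$. This gives, $\mmu$-a.e.,
\begin{equation*}
\bigl|x_1-((1-s)x_2+s x_3)\bigr|^2=(1-s)|x_1-x_2|^2+s|x_1-x_3|^2-s(1-s)|x_2-x_3|^2 ,
\end{equation*}
which is nothing but the Euclidean instance of the $1$-convexity property \eqref{1conv}. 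Integrating against $\mmu$ and using that $\pi^{1,2}_\#\mmu$ and $\pi^{1,3}_\#\mmu$ are \emph{optimal}, I would identify $\int|x_1-x_2|^2\,\d\mmu=W_2^2(\mu_1,\mu_2)$ and $\int|x_1-x_3|^2\,\d\mmu=W_2^2(\mu_1,\mu_3)$.

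Finally, the only genuine inequality enters through the third term: the projection $\pi^{2,3}_\#\mmu$ is merely \emph{some} coupling of $\mu_2$ and $\mu_3$, hence $\int|x_2-x_3|^2\,\d\mmu\ge W_2^2(\mu_2,\mu_3)$; since its coefficient $-s(1-s)$ is nonpositive, replacing this integral by the smaller $W_2^2(\mu_2,\mu_3)$ only increases the right-hand side, and combining the three estimates yields the claimed bound. The $1$-convexity of $\tfrac12 W_2^2(\mu_1,\cdot)$ along the generalized geodesic then follows at once by comparing the resulting inequality with \eqref{lambdaconv} for $\lambda=1$, with endpoints $\mu_2,\mu_3$ and $W_2(\mu_2,\mu_3)$ in the role of their distance. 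There is no serious computational obstacle here; the conceptual crux — and the point requiring the most care — is the \emph{asymmetric} use of optimality: only the two pairings with the base point $\mu_1$ are required to be optimal (giving equalities), while the $(2,3)$-pairing is left merely admissible, and it is precisely this asymmetry, together with the negative sign of the cross term, that produces the sign-favorable inequality.
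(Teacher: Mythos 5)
Your proof is correct, and it is exactly the standard argument: the paper itself states this proposition without proof (deferring to \cite{Ambrosio-Gigli-Savare05}), where the same reasoning appears — push $\mmu$ forward under $(x_1,x_2,x_3)\mapsto(x_1,(1-s)x_2+sx_3)$ to get an admissible coupling, expand the Euclidean identity $|(1-s)a+sb|^2=(1-s)|a|^2+s|b|^2-s(1-s)|a-b|^2$ pointwise, and use optimality of the $(1,2)$ and $(1,3)$ marginals together with mere admissibility of the $(2,3)$ marginal and the sign of the cross term. Your closing observation about the asymmetric role of optimality is precisely the right point to emphasize; note also that your intermediate inequality (before replacing $\int|x_2-x_3|^2\,\d\mmu$ by $W_2^2(\mu_2,\mu_3)$) is the sharper form matching the convexity notion \eqref{genconv} used later in the paper.
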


\end{subsection}

\begin{subsection}{Absolutely continuous curves in $(\PP_2(\R^d),W_2)$}

% The aim of this section is to present the basic features of the ``Riemannian''-like structure of the Wasserstein space on $\R^d$, which was introduced by Otto in a slightly different way in order to guess the characterization of the gradient flows of integral functionals of the form \eqref{intfunct} as nonnegative weak solutions of the equation \eqref{df}. 
% \noindent We introduce the main concepts following the presentation given in Chapter 8 of \cite{Ambrosio-Gigli-Savare05}.
We recall here some basic properties of absolutely continuous curves
in the Wasserstein space, which are related to the ``dynamic
interpretation'' by \textsc{Benamou-Brenier} \cite{Benamou-Brenier00}.
The main result is the following  \cite[Theorem 8.3.1]{Ambrosio-Gigli-Savare05}:
\begin{theorem}[Absolutely continuous curves and the continuity
  equation]\label{teo:ac}
Let $\mu_t:(0,+\infty)\freccia\PP_2(\R^d)$ be an absolutely continuous curve and let $|\mu\sp{\prime}|\in\rL^1(0,+\infty)$ be its metric derivative. Then there exists a Borel vector field ${\vv}:(x,t)\mapsto {\vv}_t(x)$ such that
\begin{equation}\label{normvt}
 {\vv}_t\in\rL^2(\mu_t;\R^d),\quad||{\vv}_t||_{\rL^2(\mu_t;\R^d)}\leq|\mu\sp{\prime}|(t)\quad\text{for $\LL^1$-a.e. $t\in(0,+\infty)$}
\end{equation}
and the continuity equation
\begin{equation}\label{conteq}
 \frac{\partial}{\partial t}\mu_t+\nabla\cdot(\vv_t\mu_t)=0\quad\text{in $\R^d\times(0,+\infty)$}
\end{equation}
holds in the sense of distributions. 

\noindent Moreover, 
\begin{equation}\label{veltan}
\vv_t\in\overline{\{\nabla\varphi:\,\varphi\in C^{\infty}_c(\R^d)\}}^{\rL^2(\mu_t;\R^d)}\quad\text{for $\LL^1$-a.e. $t\in(0,+\infty)$.}
\end{equation}
\noindent Conversely, if a curve $(0,+\infty)\ni t\mapsto \mu_t\in\PP_2(\R^d)$ is continuous w.r.t. the weak topology on $\PP(\R^d)$ and it satisfies the continuity equation \eqref{conteq} for some Borel vector field $\vv_t$ with
\begin{equation*}
 \int_0^{+\infty}||\vv_t||_{\rL^2(\mu_t;\R^d)}<+\infty,
\end{equation*}
then $\mu_t$ is an absolutely continuous curve and $|\mu\sp{\prime}|(t)\leq||\vv_t||_{\rL^2(\mu_t;\R^d)}$ for $\LL^1$-a.e. $t\in(0,+\infty)$.

\end{theorem}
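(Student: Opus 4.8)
The plan is to prove the two implications separately. The converse statement---that a weakly continuous solution of the continuity equation is absolutely continuous with $|\mu'|(t)\le\|\vv_t\|_{\rL^2(\mu_t;\Rd)}$---is the softer of the two, and I would establish it first by a regularization argument. The direct statement---recovering the field $\vv_t$ from an abstract absolutely continuous curve---is harder, and I would handle it by a duality (Riesz representation) argument.

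For the converse I would first smooth the data by spatial convolution with a mollifier $\rho_\eps$, setting $\mu_t^\eps:=\mu_t*\rho_\eps$ and defining $\vv_t^\eps$ through $\vv_t^\eps\,\mu_t^\eps:=(\vv_t\mu_t)*\rho_\eps$. The regularized pair again solves \eqref{conteq}, but now $\mu_t^\eps$ has a smooth density and $\vv_t^\eps$ is smooth, so integrating the ODE $\dot X_r=\vv_r^\eps(X_r)$ from time $s$ produces a flow transporting $\mu_s^\eps$ onto $\mu_r^\eps$, from which the length bound
\begin{equation*}
  W_2(\mu_s^\eps,\mu_t^\eps)\le\int_s^t\|\vv_r^\eps\|_{\rL^2(\mu_r^\eps;\Rd)}\,\d r
\end{equation*}
follows. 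The analytic heart of this step is the convolution inequality $\|\vv_r^\eps\|_{\rL^2(\mu_r^\eps;\Rd)}\le\|\vv_r\|_{\rL^2(\mu_r;\Rd)}$, which follows from Jensen's inequality applied to the probability kernel $\rho_\eps$. Passing to the limit $\eps\downarrow0$ and using $\mu_t^\eps\weakto\mu_t$ together with the lower semicontinuity of $W_2$ under weak convergence gives $W_2(\mu_s,\mu_t)\le\int_s^t\|\vv_r\|_{\rL^2(\mu_r;\Rd)}\,\d r$, which is precisely the asserted absolute continuity and metric-derivative bound.

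For the direct implication I would recover $\vv_t$ by testing against gradients. The first point is that for each fixed $\varphi\in C_c^\infty(\Rd)$ the map $t\mapsto\int_{\Rd}\varphi\,\d\mu_t$ is absolutely continuous: choosing an optimal plan $\ggamma\in\Gamma_{\mathrm{opt}}(\mu_s,\mu_t)$ and writing the increment as $\int(\varphi(y)-\varphi(x))\,\d\ggamma(x,y)$, a Cauchy--Schwarz estimate bounds it by $\Lip(\varphi)\,W_2(\mu_s,\mu_t)$ and hence by $\Lip(\varphi)\int_s^t|\mu'|(r)\,\d r$. At an $\LL^1$-a.e.\ time $t$ I would then \emph{sharpen} this to
\begin{equation*}
  \Big|\frac{\d}{\d t}\int_{\Rd}\varphi\,\d\mu_t\Big|\le\|\nabla\varphi\|_{\rL^2(\mu_t;\Rd)}\,|\mu'|(t),
\end{equation*}
by expanding $\varphi(y)-\varphi(x)=\int_0^1\la\nabla\varphi(x+r(y-x)),y-x\ra\,\d r$, applying Cauchy--Schwarz on the plan, and letting $s\to t$ so that the first factor converges to $\|\nabla\varphi\|_{\rL^2(\mu_t;\Rd)}$ while the rescaled second factor converges to $|\mu'|(t)$. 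This sharpened bound shows that $\nabla\varphi\mapsto\frac{\d}{\d t}\int\varphi\,\d\mu_t$ is a well-defined, linear, continuous functional on the subspace $\{\nabla\varphi:\varphi\in C_c^\infty(\Rd)\}\subset\rL^2(\mu_t;\Rd)$ of norm at most $|\mu'|(t)$. By Riesz representation it is represented by a unique $\vv_t\in\overline{\{\nabla\varphi\}}^{\rL^2(\mu_t;\Rd)}$ with $\|\vv_t\|_{\rL^2(\mu_t;\Rd)}\le|\mu'|(t)$, and the identity $\frac{\d}{\d t}\int\varphi\,\d\mu_t=\int\la\vv_t,\nabla\varphi\ra\,\d\mu_t$ is exactly the weak form of \eqref{conteq} together with the tangency property \eqref{veltan}.

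The main obstacle is the sharpened derivative bound, on which the whole Riesz construction rests. Establishing it requires showing that as $s\to t$ the optimal plans $\ggamma$ concentrate on the diagonal, so that $\nabla\varphi(x+r(y-x))\to\nabla\varphi(x)$ in $\rL^2(\ggamma;\Rd)$ uniformly in $r\in[0,1]$; this measure-theoretic concentration, combined with $W_2(\mu_t,\mu_{t+h})/h\to|\mu'|(t)$ at differentiability points, is the delicate part. Once it is in hand, single-valuedness of the functional is automatic, since the bound forces the derivative to vanish whenever $\|\nabla\varphi\|_{\rL^2(\mu_t;\Rd)}=0$ (in particular for constants, which drop out because each $\mu_t$ is a probability measure). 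The remaining work---selecting a jointly Borel representative $(x,t)\mapsto\vv_t(x)$ and discarding the $\LL^1$-null exceptional set of non-differentiability times---is routine measurable-selection bookkeeping once the pointwise-in-$t$ field has been constructed.
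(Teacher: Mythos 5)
Your outline is correct, but note first that the paper itself contains no proof of this theorem: it is stated as a quotation of \cite[Theorem 8.3.1]{Ambrosio-Gigli-Savare05}, so the only meaningful benchmark is the proof in that monograph. Measured against it, your plan reproduces the standard argument almost step for step: the converse implication via mollification of $\mu_t$ and of the momentum field $\vv_t\mu_t$, the Jensen-type contraction $\|\vv_t^\eps\|_{\rL^2(\mu_t^\eps;\R^d)}\le\|\vv_t\|_{\rL^2(\mu_t;\R^d)}$, the flow of the regularized field, and lower semicontinuity of $W_2$ under narrow convergence; and the direct implication via the sharpened derivative bound (your ``diagonal concentration'' is exactly the narrow convergence of optimal plans $\ggamma_{s,t}$ to $(\Id\times\Id)_{\#}\mu_t$, forced by $W_2(\mu_s,\mu_t)\to0$, combined with uniform continuity and boundedness of $\nabla\varphi$) followed by Riesz representation on the $\rL^2(\mu_t;\R^d)$-closure of gradients, which delivers \eqref{normvt}, \eqref{conteq} and \eqref{veltan} at once. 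The one genuine divergence is where the duality is run: Ambrosio--Gigli--Savar\'e work in \emph{space-time}, defining the linear functional on fields $\sum_i\eta_i(t)\nabla\varphi_i(x)$ and applying Riesz in $\rL^2(\mmu;\R^d)$ with $\mmu:=\int\mu_t\,\dt$, precisely so that a jointly measurable representative $(x,t)\mapsto\vv_t(x)$ and the $\varphi$-dependent null sets are handled in one stroke. Your pointwise-in-$t$ construction instead defers this to a ``routine'' selection step; it is indeed fixable (take $\{\varphi_n\}$ countable and dense in the $C^1$ norm, so that for a.e.\ $t$ all derivatives exist and all bounds hold simultaneously while $\{\nabla\varphi_n\}$ spans the same closure, then obtain $\vv_t$ as an a.e.\ limit of combinations with measurable Gram coefficients), but it is the only place in your outline where real work remains, and the space-time formulation of the reference exists exactly to avoid it.
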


Then, the minimal norm for the vector fields $\vv_t$ satisfying \eqref{conteq} for an absolutely continuous curve $\mu_t$ is given by its metric derivative. Furthermore, such ``minimal'' vector fields satisfy \eqref{veltan}. 

\noindent This fact suggests the following definition (we refer to
\cite[Chap.~8.4]{Ambrosio-Gigli-Savare05}).
\begin{definition}[Tangent space]Let $\mu\in\PP_2(\R^d)$. We define the \emph{tangent space} to $\PP_2(\R^d)$ at the point $\mu$ as
\begin{equation}\label{tanspace}
 \mathrm{Tan}_{\mu}\PP_2(\R^d):=\overline{\{\nabla\varphi:\,\varphi\in C^{\infty}_c(\R^d)\}}^{\rL^2(\mu_t;\R^d)}.
\end{equation}
 \end{definition}

\begin{proposition}[Tangent vectors to absolutely continuous curves]\label{tangvec} Let $\mu_t:(0,+\infty)\freccia\PP_2(\R^d)$ be an absolutely continuous curve and let $\vv_t\in\rL^2(\mu_t;\R^d)$ be a Borel vector field such that \eqref{conteq} holds. 
\noindent Then $\vv_t$ satisfies \eqref{normvt} if and only if $\vv_t\in \mathrm{Tan}_{\mu_t}\PP_2(\R^d)$ for $\LL^1$-a.e. $t\in(0,+\infty)$.
\noindent The vector $\vv_t$ is uniquely determined $\LL^1$-a.e. by \eqref{normvt} and \eqref{conteq}.
\end{proposition}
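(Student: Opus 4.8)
The plan is to exploit the linearity of the continuity equation \eqref{conteq} together with the orthogonal structure that the tangent space \eqref{tanspace} induces on $\rL^2(\mu_t;\Rd)$. First I would record the elementary but crucial algebraic fact that the closed subspace $\mathrm{Tan}_{\mu_t}\PP_2(\Rd)=\overline{\{\nabla\varphi:\varphi\in C^\infty_c(\Rd)\}}^{\rL^2(\mu_t;\Rd)}$ and the subspace $N_{\mu_t}:=\{\ww\in\rL^2(\mu_t;\Rd):\int_{\Rd}\nabla\varphi\cdot\ww\,\d\mu_t=0\ \forevery\varphi\in C^\infty_c(\Rd)\}$ are mutually orthogonal complements, so that $\rL^2(\mu_t;\Rd)=\mathrm{Tan}_{\mu_t}\PP_2(\Rd)\oplus N_{\mu_t}$ with trivial intersection. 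The point is that $N_{\mu_t}$ is exactly the set of fields $\ww$ for which $\nabla\cdot(\ww\mu_t)=0$ in the distributional sense; hence, by linearity of \eqref{conteq}, adding an element of $N_{\mu_t}$ to an admissible velocity field produces another admissible field, and any two Borel velocity fields of the same curve differ, for $\LL^1$-a.e. $t$, by an element of $N_{\mu_t}$.

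Denote by $\Pi_t$ the orthogonal projection of $\rL^2(\mu_t;\Rd)$ onto $\mathrm{Tan}_{\mu_t}\PP_2(\Rd)$. Since $\vv_t-\Pi_t\vv_t\in N_{\mu_t}$, the projected field $\Pi_t\vv_t$ still satisfies \eqref{conteq}, while $\|\Pi_t\vv_t\|_{\rL^2(\mu_t;\Rd)}\le\|\vv_t\|_{\rL^2(\mu_t;\Rd)}$, with equality precisely when $\vv_t\in\mathrm{Tan}_{\mu_t}\PP_2(\Rd)$. To prove the implication \eqref{normvt}$\Rightarrow$ tangency, I would decompose $\vv_t=\Pi_t\vv_t+\ww_t$ with $\ww_t\in N_{\mu_t}$ and apply the converse part of Theorem \ref{teo:ac} to the admissible field $\Pi_t\vv_t$, obtaining $|\mu'|(t)\le\|\Pi_t\vv_t\|_{\rL^2(\mu_t;\Rd)}$ for $\LL^1$-a.e. $t$. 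Combined with the Pythagorean identity $\|\Pi_t\vv_t\|^2+\|\ww_t\|^2=\|\vv_t\|^2$ and the assumption $\|\vv_t\|^2\le|\mu'|(t)^2$ this yields the chain $|\mu'|(t)^2\le\|\Pi_t\vv_t\|^2\le\|\vv_t\|^2\le|\mu'|(t)^2$, forcing $\ww_t=0$ and hence $\vv_t=\Pi_t\vv_t\in\mathrm{Tan}_{\mu_t}\PP_2(\Rd)$.

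For the reverse implication and the uniqueness statement simultaneously, I would invoke the direct part of Theorem \ref{teo:ac}, which furnishes a Borel velocity field $\vv^{\min}_t\in\mathrm{Tan}_{\mu_t}\PP_2(\Rd)$ satisfying \eqref{conteq} together with $\|\vv^{\min}_t\|_{\rL^2(\mu_t;\Rd)}\le|\mu'|(t)$. If $\vv_t$ satisfies \eqref{conteq} and lies in $\mathrm{Tan}_{\mu_t}\PP_2(\Rd)$ for a.e. $t$, then $\vv_t-\vv^{\min}_t$ belongs both to $N_{\mu_t}$ (as a difference of two admissible fields) and to $\mathrm{Tan}_{\mu_t}\PP_2(\Rd)$ (as a difference of two tangent fields), whence $\vv_t=\vv^{\min}_t$ for $\LL^1$-a.e. $t$. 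In particular $\|\vv_t\|_{\rL^2(\mu_t;\Rd)}\le|\mu'|(t)$, i.e. \eqref{normvt} holds, and the minimal field is the \emph{only} tangent admissible field, which is precisely the asserted uniqueness.

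The main obstacle I anticipate is not the Hilbert-space geometry, which is straightforward once the orthogonal splitting is in place, but the measurability issues: one must ensure that the pointwise projections $t\mapsto\Pi_t\vv_t$ and the difference fields assemble into genuine Borel vector fields on $\Rd\times(0,+\infty)$ to which Theorem \ref{teo:ac} may legitimately be applied, and one must pass rigorously from the distributional space--time formulation of \eqref{conteq} to the pointwise-in-$t$ orthogonality relation against spatial test functions. Both points are handled by a disintegration of the space--time test functions and a standard density argument, but they are the only genuinely technical steps.
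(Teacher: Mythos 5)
The paper itself offers no proof of Proposition \ref{tangvec}: it is quoted from \cite[Chap.~8.4]{Ambrosio-Gigli-Savare05}, so there is no internal argument to compare against. Judged on its own terms, your proposal is correct, and it is in substance the standard argument of that reference: the orthogonal splitting $\rL^2(\mu_t;\R^d)=\mathrm{Tan}_{\mu_t}\PP_2(\R^d)\oplus N_{\mu_t}$, the observation that (for $\LL^1$-a.e.\ $t$) the admissible velocity fields of a fixed curve form an affine space over $N_{\mu_t}$, Pythagoras, and comparison with the minimal field $\vv^{\min}_t$ furnished by Theorem \ref{teo:ac}. The two technical points you isolate are indeed the only delicate ones, and the first (passing from the space--time distributional form of \eqref{conteq} to the a.e.-in-$t$ orthogonality against all spatial test functions, with an exceptional set independent of the test function) is handled exactly as you say: test with products $\eta(t)\varphi(x)$ and use a countable family of $\varphi$'s whose gradients approximate uniformly on compact sets.

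One remark that streamlines your forward implication and removes the second technical point entirely: you never need to know that $(x,t)\mapsto\Pi_t\vv_t(x)$ is a Borel field. Since $\vv_t-\vv^{\min}_t\in N_{\mu_t}$ and, by \eqref{veltan}, $\vv^{\min}_t\in\mathrm{Tan}_{\mu_t}\PP_2(\R^d)$ for a.e.\ $t$, the decomposition $\vv_t=\vv^{\min}_t+(\vv_t-\vv^{\min}_t)$ already shows $\Pi_t\vv_t=\vv^{\min}_t$ a.e., and the converse part of Theorem \ref{teo:ac} applied to the (Borel, by construction) field $\vv^{\min}_t$, combined with \eqref{normvt} for $\vv^{\min}_t$, gives $\|\vv^{\min}_t\|_{\rL^2(\mu_t;\R^d)}=|\mu'|(t)$ a.e. Then orthogonality and the assumption \eqref{normvt} for $\vv_t$ yield
\begin{equation*}
|\mu'|^2(t)\;\ge\;\|\vv_t\|^2_{\rL^2(\mu_t;\R^d)}\;=\;|\mu'|^2(t)+\|\vv_t-\vv^{\min}_t\|^2_{\rL^2(\mu_t;\R^d)},
\end{equation*}
forcing $\vv_t=\vv^{\min}_t$ a.e. This single comparison proves tangency, the reverse implication, and the uniqueness statement in one stroke, with the orthogonal projection never applied to anything except the explicitly constructed reference field.
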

Tangent vector fields are also strictly related to the first order infinitesimal behavior of the Wasserstein distance along absolutely continuous curves.

\begin{proposition} Let $\mu_t:(0,+\infty)\freccia\PP_2(\R^d)$ be an absolutely continuous curve and let $\vv_t\in\mathrm{Tan}_{\mu_t}\PP_2(\R^d)$ be the tangent vector characterized by Proposition \ref{tangvec}. Then, for $\LL^1$-a.e. $t\in(0,+\infty)$ the following property holds:
\begin{align}\label{wassinf}
\underset{h\freccia0}{\lim }\,\frac{W_2(\mu_{t+h},(\Id+h\vv_t)_{\#}\mu_t)}{|h|}=0.
\end{align}
Then, if $\mu_t$ and $\mu_{t+h}$ are linked by an optimal transport map ${\tt}_{\mu_t}^{\mu_{t+h}}$, we have 
\begin{equation*}
 \underset{h\freccia0}{\lim}\,\frac{\tt_{\mu_t}^{\mu_{t+h}}-\Id}{h}=\vv_t\quad\text{in $\rL^2(\mu_t;\R^d)$}.
\end{equation*}
\end{proposition}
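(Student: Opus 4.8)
The plan is to reduce both assertions to a single statement about the $\rL^2$-convergence of suitably rescaled optimal couplings. Fix a time $t$ at which the metric derivative exists and at which the continuity equation $\frac{d}{dr}\int\varphi\,d\mu_r=\int\langle\nabla\varphi,\vv_r\rangle\,d\mu_r$ holds for every $\varphi\in C^\infty_c(\R^d)$; by Theorem \ref{teo:ac} and Proposition \ref{tangvec} one has $\|\vv_t\|_{\rL^2(\mu_t;\R^d)}=|\mu'|(t)$, and the definition of the metric derivative gives $\lim_{h\to0}W_2(\mu_t,\mu_{t+h})/|h|=|\mu'|(t)=\|\vv_t\|_{\rL^2(\mu_t;\R^d)}$. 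For each $h$ I choose an optimal plan $\ggamma_h\in\Gamma_{\mathrm{opt}}(\mu_t,\mu_{t+h})$ and push it forward through $(x,y)\mapsto(x,(y-x)/h)$ to obtain a plan $\ssigma_h$ on $\R^d\times\R^d$ with first marginal $\mu_t$ and $\int|v|^2\,d\ssigma_h=W_2^2(\mu_t,\mu_{t+h})/h^2\to\|\vv_t\|^2_{\rL^2(\mu_t;\R^d)}$. These plans have bounded second moments and fixed first marginal, hence are tight; I extract a narrowly convergent subsequence $\ssigma_h\weakto\ssigma$, still with first marginal $\mu_t$.

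The heart of the argument is to identify every such limit $\ssigma$ with $(\Id\times\vv_t)_{\#}\mu_t$. Testing the increment $\frac1h(\int\varphi\,d\mu_{t+h}-\int\varphi\,d\mu_t)=\frac1h\int(\varphi(y)-\varphi(x))\,d\ggamma_h$ with $\varphi\in C^\infty_c(\R^d)$, a Taylor expansion together with the bound $\frac1h\int|y-x|^2\,d\ggamma_h=\frac1h W_2^2(\mu_t,\mu_{t+h})\to0$ kills the remainder, so in the limit the continuity equation yields $\int\langle\nabla\varphi(x),v\rangle\,d\ssigma=\int\langle\nabla\varphi,\vv_t\rangle\,d\mu_t$ for all $\varphi$. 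Writing $\bar{\vv}(x):=\int v\,d\ssigma_x(v)$ for the barycenter of the disintegration of $\ssigma$ with respect to $\mu_t$, this says that $\bar{\vv}-\vv_t$ is orthogonal to $\mathrm{Tan}_{\mu_t}\PP_2(\R^d)=\overline{\{\nabla\varphi\}}^{\rL^2(\mu_t;\R^d)}$, whence $\|\bar{\vv}\|^2_{\rL^2(\mu_t;\R^d)}\ge\|\vv_t\|^2_{\rL^2(\mu_t;\R^d)}$. On the other hand Jensen's inequality and the lower semicontinuity of the second moment give $\|\bar{\vv}\|^2_{\rL^2(\mu_t;\R^d)}\le\int|v|^2\,d\ssigma\le\liminf_h\int|v|^2\,d\ssigma_h=\|\vv_t\|^2_{\rL^2(\mu_t;\R^d)}$. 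The resulting chain of equalities forces equality in Jensen, so $\ssigma_x=\delta_{\bar{\vv}(x)}$, and forces $\bar{\vv}=\vv_t$; hence $\ssigma=(\Id\times\vv_t)_{\#}\mu_t$. Uniqueness of the limit promotes this to convergence of the whole family, with $\int|v|^2\,d\ssigma_h\to\int|v|^2\,d\ssigma$.

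With this in hand both conclusions follow. Expanding $\int|v-\vv_t(x)|^2\,d\ssigma_h=\int|v|^2\,d\ssigma_h-2\int\langle v,\vv_t(x)\rangle\,d\ssigma_h+\|\vv_t\|^2$, the first term tends to $\|\vv_t\|^2$ and the cross term tends to $\|\vv_t\|^2$ as well, so the whole expression tends to $0$; since $\int|v-\vv_t(x)|^2\,d\ssigma_h=h^{-2}\int|y-x-h\vv_t(x)|^2\,d\ggamma_h$, and since $(\pi^2,(\Id+h\vv_t)\circ\pi^1)_{\#}\ggamma_h$ is an admissible coupling between $\mu_{t+h}$ and $(\Id+h\vv_t)_{\#}\mu_t$, I get $W_2(\mu_{t+h},(\Id+h\vv_t)_{\#}\mu_t)\le\big(\int|y-x-h\vv_t(x)|^2\,d\ggamma_h\big)^{1/2}=o(|h|)$, which is \eqref{wassinf}. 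When $\mu_{t+h}$ is the image of $\mu_t$ under an optimal map $\tt_{\mu_t}^{\mu_{t+h}}$, the plan $\ggamma_h$ is induced by that map, so $\ssigma_h=(\Id\times w_h)_{\#}\mu_t$ with $w_h=(\tt_{\mu_t}^{\mu_{t+h}}-\Id)/h$, and $\int|v-\vv_t(x)|^2\,d\ssigma_h=\|w_h-\vv_t\|^2_{\rL^2(\mu_t;\R^d)}\to0$ is exactly the second assertion. The main obstacle is the rigidity step isolating $\ssigma=(\Id\times\vv_t)_{\#}\mu_t$: this is where the minimality of the tangent velocity enters decisively, and the convergence of the cross term $\int\langle v,\vv_t(x)\rangle\,d\ssigma_h$ must be handled with care since $\vv_t$ is merely $\rL^2$, by approximating it in $\rL^2(\mu_t;\R^d)$ with fields $\nabla\varphi$, $\varphi\in C^\infty_c(\R^d)$ (available by definition of the tangent space) and using the uniform second-moment bound to control the error.
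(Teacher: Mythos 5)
Your proof is correct, and it is essentially the standard argument: the paper states this proposition without proof, quoting it from \cite[Chap.~8.4]{Ambrosio-Gigli-Savare05}, and the proof given there (Proposition 8.4.6) proceeds exactly by your blow-up of optimal plans $(x,y)\mapsto(x,(y-x)/h)$, identification of the narrow limit via the continuity equation and the tangency/minimality of $\vv_t$, and the Jensen rigidity step forcing the limit plan to be $(\Id\times\vv_t)_{\#}\mu_t$. The technical points you flag (handling the cross term by approximating $\vv_t$ with gradients, and choosing $t$ in a full-measure set where the differentiated continuity equation holds for all test functions via a countable dense family) are exactly the ones that need care, and your treatment of them is sound.
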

As an application of \eqref{wassinf} we are able to show the $\LL^1$-a.e. differentiability of $t\mapsto W_2(\mu_t,\sigma)$ along absolutely continuous curves $\mu_t$ 
in terms of tangent vectors and optimal transport plans; this provides
a useful formula for the left hand side of the \eqref{eq:EVI}
\begin{equation*}
  \frac{1}{2}\frac{\d}{\dt}W_2^2(\mu_t,\sigma)\leq\phi(\sigma)-\phi(\mu_t)\quad\forevery \sigma\in D(\phi)
\end{equation*}

\begin{theorem}Let $\mu_t:(0,+\infty)\freccia\PP_2(\R^d)$ be an absolutely continuous curve, let $\vv_t\in\mathrm{Tan}_{\mu_t}\PP_2(\R^d)$ be the tangent vector characterized by Proposition \ref{tangvec} and let $\sigma\in\PP_2(X)$. Then
\begin{equation}\label{wassder}
  \frac{1}{2}\frac{\d}{\dt}W_2^2(\mu_t,\sigma)=\int_{\R^d\times \R^d}\langle x-y,\vv_t(x)\rangle\,\d\ggamma(x,y)\quad\forevery \ggamma\in\Gamma_{\mathrm{opt}}(\mu_t,\sigma),
\end{equation}
for $\LL^1$-a.e $t\in(0,+\infty)$.
\noindent In particular, if $\mu_t\in\PP_2^a(\R^d)$,
\begin{equation}\label{wassderac}
  \frac{1}{2} \frac{\d}{\dt}W_2^2(\mu_t,\sigma)=\int_{\R^d}\langle x-\tt_{\mu_t}^{\sigma}(x),\vv_t(x)\rangle\,\d \mu_t(x),
 \end{equation}
where $\tt_{\mu_t}^{\sigma}$ is the unique optimal transport map between $\mu_t$ and $\sigma$.
\end{theorem}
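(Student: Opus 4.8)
The plan is to reduce the computation to a first-order expansion of $W_2^2$ along the ``tangent perturbation'' $(\Id+h\vv_t)_\#\mu_t$, exploiting the infinitesimal identity \eqref{wassinf}. I would work at a fixed time $t$ at which, simultaneously: (i) the metric derivative and the tangent field $\vv_t$ exist and satisfy \eqref{wassinf}, and (ii) the scalar map $t\mapsto W_2^2(\mu_t,\sigma)$ is differentiable. Property (ii) holds for $\LL^1$-a.e. $t$ because $t\mapsto W_2(\mu_t,\sigma)$ is locally Lipschitz (the triangle inequality gives $|W_2(\mu_t,\sigma)-W_2(\mu_s,\sigma)|\le W_2(\mu_t,\mu_s)\le\int_s^t|\mu'|(r)\,\d r$), and therefore so is its square; the intersection of the two full-measure sets is still of full measure.

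First I would fix an optimal plan $\ggamma\in\Gamma_{\mathrm{opt}}(\mu_t,\sigma)$ and set $I(\ggamma):=\int_{\R^d\times\R^d}\la x-y,\vv_t(x)\ra\,\d\ggamma(x,y)$. Pushing $\ggamma$ forward through the map $(x,y)\mapsto(x+h\vv_t(x),y)$ produces an admissible coupling between $(\Id+h\vv_t)_\#\mu_t$ and $\sigma$, whence, using the optimality of $\ggamma$ for the leading term,
\[
  W_2^2\bigl((\Id+h\vv_t)_\#\mu_t,\sigma\bigr)\le \int|x+h\vv_t(x)-y|^2\,\d\ggamma = W_2^2(\mu_t,\sigma)+2h\,I(\ggamma)+h^2\|\vv_t\|_{\rL^2(\mu_t)}^2 .
\]
Combining this with the triangle inequality and \eqref{wassinf} (note that $W_2((\Id+h\vv_t)_\#\mu_t,\sigma)$ stays bounded near $h=0$, so squaring the triangle estimate costs only an $o(|h|)$ error) yields the single master inequality
\[
  W_2^2(\mu_{t+h},\sigma)-W_2^2(\mu_t,\sigma)\le 2h\,I(\ggamma)+o(|h|)\qquad\text{as }h\to0 .
\]

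The crucial step is then the sign analysis. Dividing the master inequality by $h>0$ and letting $h\downarrow0$ gives $\tfrac12\frac{\d}{\dt}W_2^2(\mu_t,\sigma)\le I(\ggamma)$, while dividing by $h<0$ (which reverses the inequality) and letting $h\uparrow0$ gives $\tfrac12\frac{\d}{\dt}W_2^2(\mu_t,\sigma)\ge I(\ggamma)$; here I use that the genuine two-sided derivative exists by (ii). Since both inequalities hold for \emph{every} optimal $\ggamma$, I obtain $\sup_{\ggamma}I(\ggamma)\le\tfrac12\frac{\d}{\dt}W_2^2(\mu_t,\sigma)\le\inf_{\ggamma}I(\ggamma)$, and as $\inf\le\sup$ trivially, all three quantities coincide. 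This simultaneously shows that $I(\ggamma)$ is independent of the chosen optimal plan and establishes \eqref{wassder}. Finally, when $\mu_t\in\PP_2^a(\R^d)$ the existence/uniqueness theorem for optimal maps (Brenier) forces $\ggamma=(\Id\times\tt_{\mu_t}^\sigma)_\#\mu_t$, and substituting $y=\tt_{\mu_t}^\sigma(x)$ into \eqref{wassder} gives \eqref{wassderac}.

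The main obstacle I anticipate is organizing the error control so that a one-sided expansion delivers a two-sided derivative: the plan construction only ever yields an \emph{upper} bound on $W_2^2((\Id+h\vv_t)_\#\mu_t,\sigma)$, and it is precisely the interplay between this one-sided bound at $\pm h$ and the a priori differentiability coming from absolute continuity that squeezes $\inf_\ggamma I(\ggamma)$ and $\sup_\ggamma I(\ggamma)$ together. Care is also needed to verify that $(x,y)\mapsto(x+h\vv_t(x),y)$ genuinely transports $\ggamma$ to a coupling with the correct marginals, and that $\vv_t\in\rL^2(\mu_t;\R^d)$ makes the $h^2$ remainder and the squared-triangle error legitimate.
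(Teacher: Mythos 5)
Your proof is correct and takes essentially the route the paper intends (the paper states this theorem without proof, as an application of \eqref{wassinf} following \cite{Ambrosio-Gigli-Savare05}): the coupling $(x,y)\mapsto(x+h\vv_t(x),y)$ gives the one-sided quadratic expansion of $W_2^2$, \eqref{wassinf} plus the triangle inequality transfers it to $\mu_{t+h}$, and the a.e.\ differentiability of $t\mapsto W_2^2(\mu_t,\sigma)$ squeezes the $h>0$ and $h<0$ bounds into the identity \eqref{wassder}, with \eqref{wassderac} then following from Brenier's uniqueness theorem. One cosmetic point: your displayed triangle-inequality estimate shows that $t\mapsto W_2(\mu_t,\sigma)$ is locally \emph{absolutely continuous} rather than locally Lipschitz (since $|\mu\sp{\prime}|$ is only in $\rL^1_{\mathrm{loc}}$), but absolute continuity already gives the a.e.\ differentiability you use.
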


\end{subsection}

\begin{subsection}{Geodesically $\lambda$-convex functionals in $\PP_2(\R^d)$}
In this section we introduce the three main classes of $\lambda$-geodesically convex functionals on the Wasserstein space $(\PP_2(\R^d),W_2)$
introduced by \textsc{McCann} \cite{McCann97}
(for the proofs of the main results we refer to Chapter 9 of \cite{Ambrosio-Gigli-Savare05}).

\begin{example}[Potential energy]
  \label{ex:potential}
  Let $V:\R^d\freccia\R$ be a $\lambda_V$-convex function for some $\lambda_V\in \R$
  and let us define the potential energy
\begin{equation}\label{potential}
 \VV(\mu):=\int_{\R^d}V(x)\,\d \mu(x)\quad \text{for every }\mu\in \PP_2(\R^d).
\end{equation}
For every $\mu_1,\mu_2\in D(\VV)$ and $\mmu\in\Gamma(\mu_1,\mu_2)$ we have
\begin{equation}
\VV\bigl(\bigl[(1-s)\pi^1+s\pi^2\bigr]_{\#}\mmu\bigr)\leq(1-s)\VV(\mu_1)+s\VV(\mu_2)-\frac{\lambda_V}{2}s(1-s)\int_{\R^d\times\R^d}|x-y|^2\,\d \mmu(x,y).
\end{equation}
In particular, $\VV$ is geodesically $\lambda_V$-convex on $\PP_2(\R^d)$.
\end{example}
\begin{example}[Interaction energy]
  \label{ex:interaction}
  Let $\lambda_W\le 0$ and
  let $W:\R^d\to\R$ be a $\lambda_W$-convex function with $W(-x)=W(x)$ for every $x\in \R^d$,
  and let us set
  \begin{equation}
    \label{eq:41}
    \WW(\mu):=\frac 12\iint_{\R^d\times\R^d} W(x-y)\,\d\mu(x)\,\d\mu(y)\quad \text{for every }\mu\in \PP_2(\R^d).
  \end{equation}
  For every $\mu_1,\mu_2\in D(\WW)$ and $\mmu\in\Gamma(\mu_1,\mu_2)$ we have
  \begin{equation}  \WW\bigl(\bigl[(1-s)\pi^1+s\pi^2\bigr]_{\#}\mmu\bigr)\leq
    (1-s)\WW(\mu_1)+s\WW(\mu_2)-\frac{\lambda_W}{2}s(1-s)\int_{\R^d\times\R^d}|x-y|^2\,\d \mmu(x,y).
\end{equation}
In particular, $\WW$ is geodesically $\lambda_W$-convex on $\PP_2(\R^d)$.
\end{example}
\begin{example}[Internal energy]
  \label{ex:internal}
  Let $U:[0,+\infty)\freccia \R$ be a convex function such that
  \begin{align}\label{uhp}
    &U(0)=0,\qquad
    \liminf_{s\downarrow0}\frac{U(s)}{s^{\alpha}}>-\infty
    \quad\text{for some $\alpha>\frac{d}{d+2}$},\qquad
    \underset{s\freccia+\infty}{\lim}\,\frac{U(s)}{s}=+\infty\\
    &\label{uconv}
    \text{the map $s\mapsto s^dU(s^{-d})$ is convex and non-increasing on $(0,+\infty)$}.
\end{align}
The internal energy functional
\begin{equation}\label{uint}
 \UU(\mu):=
\begin{cases}
&\displaystyle\int_{\R^d}U(\rho(x))\,\d \LL^d(x)\quad\text{if $\mu\ll\LL^d$, $\rho=\frac{d\mu}{d\LL^d}$},\\
&+\infty\quad\text{otherwise}.
\end{cases}
\end{equation}
is geodesically convex and lower semicontinuous in $\PP_2(\R^d)$.
Among the functionals $\UU$ with $U$ satisfying \eqref{uhp} and \eqref{uconv} we have
\begin{align}
&\text{the entropy functional }U(s)=s\log s\label{entropy}\\
&\text{the power functional }U(s)=\frac{s^m}{m-1},\quad m>1.\label{power}
\end{align}
Property \eqref{uconv} is also satisfied in the range $1-\frac 1d<m<1$; however, since in this case
$U$ is not superlinear at infinity (the third condition of \eqref{uhp}),
we have to consider the relaxed lower semicontinuous functional
\begin{equation*}
 \UU^{*}(\mu)=\int_{\R^d}\frac{\rho^m}{m-1}\,\d \LL^d, \quad
 \text{if }\mu=\rho\LL^d+\mu^\perp,\ \mu^\perp\perp\LL^d.
\end{equation*}
\end{example}
\begin{example}[Relative entropy]
  \label{ex:relative}
  Let $\mu,\gamma$ be two measures in $\PP(\R^d)$. Then, the relative entropy of $\mu$ w.r.t. $\gamma$ is the functional defined by
\begin{equation}\label{ent}
 \mathrm{Ent}_{\gamma}(\mu):=
\begin{cases}
  &\displaystyle \int\frac{d\mu}{d\gamma}\log\frac{d\mu}{d\gamma}\,\d \gamma,\quad\text{if $\mu\ll\gamma$},\\
&+\infty\quad\text{otherwise.}
\end{cases}
\end{equation}
\end{example}
We note that \eqref{ent} corresponds to the internal energy associated to the function \eqref{entropy}
when $\gamma=\LL^d$ (which nevertheless is not in $\PP(\R^d)$).
\begin{proposition}\label{prop:ent} The relative entropy $ \mathrm{Ent}_{\gamma}$ is geodesically convex in $\PP_2(\R^d)$ if and only if one of the following conditions holds:
\begin{align}
\text{\rm 1. }&\gamma=\rme^{-V}\LL^d\text{ for some convex function $V:\R^d\freccia\R$;}\\
\text{\rm 2. }&\gamma\text{ is \emph{log-concave}, i.e. for every couple of open sets $A,B\subset\R^d$, $t\in[0,1]$}\notag\\
\label{eq:54}
&\log\gamma((1-t)A+tB)\geq(1-t)\log\gamma(A)+t\log\gamma(B).
\end{align}
\end{proposition}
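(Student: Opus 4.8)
The plan is to show that geodesic convexity of $\eg$ is equivalent to Condition 1, after recording that Conditions 1 and 2 are two equivalent formulations of log-concavity of $\gamma$. Indeed, if $\gamma=\rme^{-V}\LL^d$ with $V$ convex then the density $\rme^{-V}$ is log-concave and the Pr\'ekopa--Leindler inequality yields the Minkowski inequality in Condition 2; conversely, by Borell's theorem a measure satisfying Condition 2 is either supported on a proper affine subspace or absolutely continuous with log-concave density, hence of the form $\rme^{-V}\LL^d$ with $V$ convex on the affine hull of its support. I would therefore carry out the argument in the nondegenerate case (full-dimensional support, so $\gamma\ll\LL^d$) and reduce the degenerate case to it at the end by working inside $\aff(\supp\gamma)$.

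For the sufficiency of Condition 1 the decisive point is a splitting of the relative entropy into an internal and a potential part. If $\mu=\rho\LL^d\ll\gamma=\rme^{-V}\LL^d$ then $\frac{\d\mu}{\d\gamma}=\rho\,\rme^{V}$, and a direct computation gives
\begin{equation*}
  \eg(\mu)=\int_{\Rd}\rho\log\rho\,\d\LL^d+\int_{\Rd}V\,\d\mu=\ed(\mu)+\VV(\mu),
\end{equation*}
where $\VV$ is the potential energy of Example \ref{ex:potential} and $\ed$ is the internal energy associated with the entropy density $U(s)=s\log s$ of \eqref{entropy}. By Example \ref{ex:internal} the functional $\ed$ is geodesically convex, and by Example \ref{ex:potential} the convexity of $V$ makes $\VV$ geodesically convex; since both are convex along the \emph{same} displacement geodesics, their sum $\eg$ is geodesically convex. (One only has to check that the splitting is compatible with the $+\infty$ convention and with the lower bounds built into \eqref{uhp}, which hold on the finite-entropy domain.)

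For the necessity I would, assuming $\eg$ geodesically convex, recover log-concavity of $g:=\rme^{-V}$ by testing the convexity inequality on measures that concentrate at two points. Fix $x_0,x_1\in\Rd$ and $t\in(0,1)$, put $x_t:=(1-t)x_0+tx_1$, and for small $\eps>0$ let $\mu_0^\eps,\mu_1^\eps$ be the normalized restrictions of $\gamma$ to the balls $B_\eps(x_0),B_\eps(x_1)$. Their entropies are explicit: with $\omega_d:=\LL^d(B_1(0))$,
\begin{equation*}
  \eg(\mu_i^\eps)=-\log\gamma(B_\eps(x_i))=-\log g(x_i)-\log(\omega_d\eps^d)+o(1).
\end{equation*}
As $\eps\downarrow0$ the optimal plans between $\mu_0^\eps$ and $\mu_1^\eps$ converge narrowly to $\delta_{(x_0,x_1)}$, so the displacement interpolant $\mu_t^\eps$ concentrates in $B_\eps(x_t)$; writing it as the image of $\mu_0^\eps$ under $(1-t)\Id+t\tt^\eps$ and using that the Jacobian of this map tends to the identity, one obtains
\begin{equation*}
  \eg(\mu_t^\eps)=-\log g(x_t)-\log(\omega_d\eps^d)+o(1).
\end{equation*}
Inserting the three expansions into $\eg(\mu_t^\eps)\le(1-t)\eg(\mu_0^\eps)+t\,\eg(\mu_1^\eps)$, the singular terms $-\log(\omega_d\eps^d)$ cancel, and letting $\eps\downarrow0$ leaves
\begin{equation*}
  \log g(x_t)\ge(1-t)\log g(x_0)+t\log g(x_1),
\end{equation*}
i.e. $\log g$ is concave; hence $V=-\log g$ is convex and Condition 1 holds.

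The hard part will be the control of the interpolant $\mu_t^\eps$ entering the second expansion above. The genuine geodesic is driven by the Brenier map $\tt^\eps$ between $\mu_0^\eps$ and $\mu_1^\eps$, and to get the clean value $-\log g(x_t)$ I need $\det D\big((1-t)\Id+t\tt^\eps\big)\to1$ as $\eps\downarrow0$, i.e. a quantitative stability statement saying that optimal maps between measures concentrating at two distinct points converge to the translation. A convenient way to sidestep the attendant regularity issues is to prove the inequality first for $\gamma$ with continuous, strictly positive density (where the centred values $g(x_i)$ are legitimate), and then to remove this restriction by mollifying $\gamma$ and passing to the limit via lower semicontinuity and stability of $\eg$. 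Finally the degenerate case, when $\supp\gamma$ is contained in a proper affine subspace, follows by repeating the whole argument inside $\aff(\supp\gamma)$, on which $\gamma$ is absolutely continuous.
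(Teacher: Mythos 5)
Your sufficiency argument is essentially correct and is the standard route: the splitting $\eg=\ed+\VV$, with each term convex along the same displacement geodesics, settles Condition 1. Note only that Condition 2 (which you must also show implies geodesic convexity, since the statement is a disjunction) produces a density $\rme^{-V}$ with $V$ convex but possibly equal to $+\infty$ (e.g.\ the uniform measure on a ball), so you need the potential-energy example for l.s.c.\ convex potentials with extended values; Example \ref{ex:potential} as stated assumes $V$ finite, and the reduction to $\aff(\supp\gamma)$ does not remove this issue. The genuine gaps are in the necessity direction. The most serious one is your final reduction: the hypothesis is geodesic convexity of $\eg$, and there is no way to transfer it to $\mathrm{Ent}_{\gamma\ast\eta_\delta}$. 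That transfer is exactly as strong as the proposition itself (it would follow from the conclusion via Pr\'ekopa--Leindler, but cannot be invoked while proving it). So, as written, your necessity argument only covers measures $\gamma$ that already have a continuous, strictly positive density, whereas the statement concerns arbitrary $\gamma\in\PP(\R^d)$, which need not even be absolutely continuous.

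The second gap is the step you yourself flag as the hard part: the asymptotic equality for $\eg(\mu^\eps_t)$ via convergence of the Jacobians of the Brenier maps is left unproven, and the regularity it would require (Caffarelli-type estimates, quantitative stability of optimal maps) is heavy machinery. But it is also unnecessary, because your chain of inequalities only uses a \emph{lower} bound on $\eg(\mu^\eps_t)$, and that bound is elementary: any optimal plan between $\mu^\eps_0$ and $\mu^\eps_1$ is supported in $\overline B_\eps(x_0)\times\overline B_\eps(x_1)$, hence $\mu^\eps_t$ is concentrated on $(1-t)\overline B_\eps(x_0)+t\overline B_\eps(x_1)=\overline B_\eps(x_t)$, and Jensen's inequality applied to $s\mapsto s\log s$ gives $\eg(\mu)\ge-\log\gamma(C)$ for every $\mu$ concentrated on a set $C$. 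Once you use this observation you can also discard the balls and the density altogether: running the same argument with normalized restrictions of $\gamma$ to arbitrary compact sets $K_A\subset A$, $K_B\subset B$ of positive mass, and then invoking inner regularity, yields $\log\gamma((1-t)A+tB)\ge(1-t)\log\gamma(A)+t\log\gamma(B)$ directly, i.e.\ Condition 2 for arbitrary $\gamma$, with no smoothing, no optimal-map regularity, and no case distinctions. This is the argument of \cite[Chap.~9]{Ambrosio-Gigli-Savare05}, to which the paper (which states the proposition without proof) refers.
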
 
Now we introduce the notion of convexity which will be crucial to apply the metric theory of gradient flows developed in Section \ref{sect:genteo} to the main examples
\ref{ex:potential}, \ref{ex:interaction}, \ref{ex:internal}, \ref{ex:relative} of
geodesically $\lambda$-convex functionals in the Wasserstein space.
\begin{definition}[Convexity along generalized geodesics] Given $\lambda\in\R$, we say that $\phi:\PP_2(\R^d)\freccia(-\infty,+\infty]$ is $\lambda$-convex along generalized geodesics if for any $\mu_1,\mu_2,\mu_3\in D(\phi)$ there exists a generalized geodesic $[0,1]\ni s\mapsto\mu_s^{2\freccia3}$ joining $\mu_2$ to $\mu_3$ induced by a plan $\mmu$ satisfying \eqref{mmu} such that
\begin{align}\label{genconv}
\phi(\mu_s^{2\freccia3})\leq(1-s)\phi(\mu_2)+s\phi(\mu_3)-\frac{\lambda}{2}s(1-s)\int%_{\R^d\times\R^d\times\R^d}
|x_2-x_3|^2\,\d \mmu(x_1,x_2,x_3)\quad\forevery s\in[0,1].
\end{align}
\end{definition}
% We note that, when $\lambda\neq0$, definition \eqref{genconv} differs from the metric definition of $\lambda$-convexity along curves given in \eqref{lambdaconv}, since in general
% \begin{equation*}
%  W_2^2(\mu_2,\mu_3)\leq\int_{\R^d\times\R^d\times\R^d}|x_2-x_3|^2\,\d \mmu(x_1,x_2,x_3).
% \end{equation*}
% However, definition \eqref{genconv} is justified by the following
\begin{lemma}\label{convphiw2}$[(1\slash\ttau+\lambda)$-convexity of $\Phi(\ttau,\mu_1;\cdot)]$ Let $\phi:\PP_2(\R^d)\freccia(-\infty,+\infty]$ be a proper functional which is $\lambda$-convex along generalized geodesics for some $\lambda\in\R$. Then, for each $\mu_1\in D(\phi)$ and for each $0<\ttau<\frac{1}{\lambda^-}$ the functional
\begin{equation*}
 \Phi(\ttau,\mu_1;\mu):=\frac{1}{2\ttau}W_2^2(\mu_1,\mu)+\phi(\mu) \text{ satisfies the convexity assumption \eqref{convpphi}.}
\end{equation*}
 \end{lemma}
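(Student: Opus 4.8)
The plan is to exhibit, for the fixed base point $V=\mu_1$ and for arbitrary $\mu_2,\mu_3\in D(\phi)$, an explicit connecting curve along which \eqref{eq:33bis} holds, the natural candidate being precisely the generalized geodesic furnished by the hypothesis. Since $\phi$ is $\lambda$-convex along generalized geodesics, there is a plan $\mmu\in\Gamma(\mu_1,\mu_2,\mu_3)$ satisfying \eqref{mmu} for which the curve $\gamma_s:=\mu_s^{2\freccia3}=((1-s)\pi^2+s\pi^3)_{\#}\mmu$, joining $\mu_2$ to $\mu_3$, obeys \eqref{genconv}. I would take this $\gamma_s$ as the curve required by \eqref{convpphi}, so that it remains only to control the quadratic term $\frac1{2\tau}W_2^2(\mu_1,\gamma_s)$ of $\Phi$ along it and to add the two contributions.

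First I would record the sharp form of the $1$-convexity of $\frac12 W_2^2(\mu_1,\cdot)$ along $\gamma_s$, keeping the full transport cost in the last term. Using $\big(\pi^1,(1-s)\pi^2+s\pi^3\big)_{\#}\mmu\in\Gamma(\mu_1,\gamma_s)$ as a competitor in \eqref{kant}, together with the pointwise identity $|x_1-(1-s)x_2-sx_3|^2=(1-s)|x_1-x_2|^2+s|x_1-x_3|^2-s(1-s)|x_2-x_3|^2$ and the optimality $\pi^{1,i}_{\#}\mmu\in\Gamma_{\mathrm{opt}}(\mu_1,\mu_i)$ from \eqref{mmu} (which turns $\int|x_1-x_i|^2\,\d\mmu$ into $W_2^2(\mu_1,\mu_i)$), this yields
\[
  W_2^2(\mu_1,\gamma_s)\le (1-s)W_2^2(\mu_1,\mu_2)+sW_2^2(\mu_1,\mu_3)-s(1-s)\int|x_2-x_3|^2\,\d\mmu ,
\]
the sharpening of the $1$-convexity proposition stated above in which the term $W_2^2(\mu_2,\mu_3)$ is replaced by the (larger) transport cost $\int|x_2-x_3|^2\,\d\mmu$. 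Multiplying by $\tfrac1{2\tau}$ and adding \eqref{genconv}, the two right-hand sides carry the \emph{same} quadratic correction $\int|x_2-x_3|^2\,\d\mmu$, so the sum collapses to
\[
  \Phi(\tau,\mu_1;\gamma_s)\le(1-s)\Phi(\tau,\mu_1;\mu_2)+s\Phi(\tau,\mu_1;\mu_3)
  -\frac{1+\lambda\tau}{2\tau}\,s(1-s)\int|x_2-x_3|^2\,\d\mmu .
\]

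Finally I would pass from the transport cost to $W_2^2(\mu_2,\mu_3)$: since $\pi^{2,3}_{\#}\mmu\in\Gamma(\mu_2,\mu_3)$ one always has $\int|x_2-x_3|^2\,\d\mmu\ge W_2^2(\mu_2,\mu_3)$, and the coefficient $\frac{1+\lambda\tau}{2\tau}$ is strictly positive, so replacing $\int|x_2-x_3|^2\,\d\mmu$ by the smaller $W_2^2(\mu_2,\mu_3)$ only weakens the bound and produces exactly \eqref{eq:33bis}. The one point that genuinely requires the hypotheses is this last sign check, which is where $0<\tau<1/\lambda^-$ enters: if $\lambda\ge0$ then $1+\lambda\tau\ge1>0$ automatically, while if $\lambda<0$ the assumption $\tau<1/\lambda^-=-1/\lambda$ gives $\lambda\tau>-1$, hence again $1+\lambda\tau>0$. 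The main subtlety — and the reason for keeping the full transport cost rather than the weaker $W_2^2(\mu_2,\mu_3)$ in the $1$-convexity estimate — is precisely the case $\lambda<0$: there $\int|x_2-x_3|^2\,\d\mmu$ enters \eqref{genconv} with a sign that would obstruct the replacement term-by-term, and only after the two corrections have been merged into a single coefficient $\frac{1+\lambda\tau}{2\tau}$, shown positive, can the passage to $W_2^2(\mu_2,\mu_3)$ be carried out cleanly.
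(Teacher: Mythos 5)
Your proof is correct and is exactly the argument this lemma rests on (the paper states it without proof, deferring to \cite[Chap.~9]{Ambrosio-Gigli-Savare05}): take the generalized geodesic from the hypothesis, use the \emph{sharp} $1$-convexity of $\frac12 W_2^2(\mu_1,\cdot)$ along it --- keeping the transport cost $\int|x_2-x_3|^2\,\d\mmu$ rather than $W_2^2(\mu_2,\mu_3)$ --- add \eqref{genconv}, and only then, using $1+\lambda\tau>0$, relax the merged correction term to $W_2^2(\mu_2,\mu_3)$ to obtain \eqref{eq:33bis}. Your observation that the weaker form of the $1$-convexity proposition stated in the paper would not suffice when $\lambda<0$ (since the transport cost enters \eqref{genconv} with the unfavorable sign) identifies precisely the one genuine subtlety, and your handling of it is correct.
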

By Lemma \ref{convphiw2}, whenever $\phi$ is proper, l.s.c.~and $\lambda$-convex along generalized geodesics in $\PP_2(\R^d)$ we can apply Theorem \ref{genteo} and get the existence, uniqueness and regularizing estimates for the solutions of the {$\EVIshort{\lambda}$}.

The examples of geodesically convex functionals in $\PP_2(\R^d)$ which have been introduced in this section are also convex along the generalized geodesics.
\begin{theorem}\label{fconvgen}
  The functionals on $\PP_2(\R^d)$ considered by the examples \ref{ex:potential},
  \ref{ex:interaction} (with $\lambda\le0$),
  \ref{ex:internal} (under condition \eqref{uconv}), and \ref{ex:relative}
  (under condition \eqref{eq:54}) are $\lambda$-convex along generalized geodesics.
\end{theorem}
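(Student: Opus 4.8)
The plan is to verify the generalized-geodesic convexity inequality \eqref{genconv} for each functional separately, always along the generalized geodesic $\mu_s^{2\freccia3}=((1-s)\pi^2+s\pi^3)_{\#}\mmu$ associated to a plan $\mmu$ satisfying \eqref{mmu}. The potential and interaction energies are elementary. For the potential energy $\VV$ of Example \ref{ex:potential} I would push the pointwise $\lambda_V$-convexity inequality \eqref{eq:3} of $V$ through the integral: since $\VV(\mu_s^{2\freccia3})=\int V((1-s)x_2+sx_3)\,\d\mmu(x_1,x_2,x_3)$ and $\pi^2_{\#}\mmu=\mu_2$, $\pi^3_{\#}\mmu=\mu_3$, integrating \eqref{eq:3} gives \eqref{genconv} with $\lambda=\lambda_V$ for \emph{every} admissible $\mmu$. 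For the interaction energy $\WW$ the same argument applies to the differences $x-y$: writing $\mu_s^{2\freccia3}\otimes\mu_s^{2\freccia3}$ as the push-forward of $\mmu\otimes\mmu$ and applying \eqref{eq:3} to $W$ at $x_2-y_2$ and $x_3-y_3$, I obtain the correction term $-\frac{\lambda_W}{4}s(1-s)\iint|(x_2-x_3)-(y_2-y_3)|^2\,\d\mmu\,\d\mmu$. Expanding the square and using $\iint\langle x_2-x_3,y_2-y_3\rangle\,\d\mmu\,\d\mmu=\bigl|\int(x_2-x_3)\,\d\mmu\bigr|^2\ge0$ shows this double integral is $\le2\int|x_2-x_3|^2\,\d\mmu$; since $\lambda_W\le0$ the sign is favourable and \eqref{genconv} holds with $\lambda=\lambda_W$.

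The internal energy $\UU$ is the genuinely hard case. Here I would take the base point $\mu_1\in D(\UU)\subset\PP_2^a(\R^d)$, so that by Brenier's theorem the optimal plans $\pi^{1,i}_{\#}\mmu$ are induced by maps $\tt^i:=\tt_{\mu_1}^{\mu_i}=\nabla\phi_i$ with $\phi_i$ convex; then $\mmu=(\Id\times\tt^2\times\tt^3)_{\#}\mu_1$ and $\mu_s^{2\freccia3}=(T_s)_{\#}\mu_1$ with $T_s:=(1-s)\tt^2+s\tt^3$. Since $DT_s=(1-s)D^2\phi_2+sD^2\phi_3$ is symmetric positive semidefinite a.e., the change-of-variables formula yields $\rho_s(T_s(x))=\rho_1(x)/\det DT_s(x)$ and hence
\[
\UU(\mu_s^{2\freccia3})=\int_{\R^d}U\!\Bigl(\frac{\rho_1(x)}{\det DT_s(x)}\Bigr)\det DT_s(x)\,\d x.
\]
The key is that the integrand is convex in $s$ for each fixed $x$: setting $F(s):=(\det DT_s(x))^{1/d}$, the Minkowski determinant inequality for symmetric positive semidefinite matrices makes $s\mapsto F(s)$ concave, and rewriting the integrand as $\rho_1\,h(F/\rho_1^{1/d})$ with $h(r):=r^dU(r^{-d})$, hypothesis \eqref{uconv} states precisely that $h$ is convex and non-increasing; since the composition of a convex non-increasing function with a concave function is convex, the integrand is convex in $s$, and integrating in $x$ gives \eqref{genconv} with $\lambda=0$.

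Finally, for the relative entropy under \eqref{eq:54} with $\gamma=\rme^{-V}\LL^d$, $V$ convex, I would reduce to the previous cases through the decomposition
\[
\mathrm{Ent}_\gamma(\mu)=\int_{\R^d}\rho\log\rho\,\d\LL^d+\int_{\R^d}V\,\d\mu,\qquad\mu=\rho\LL^d,
\]
which expresses $\mathrm{Ent}_\gamma$ as the sum of the internal energy with $U(s)=s\log s$ and the potential energy with potential $V$. One checks that $U(s)=s\log s$ satisfies \eqref{uconv} (indeed $h(r)=-d\log r$ is convex and non-increasing), so the entropic part is convex along the generalized geodesic built from $\mu_1$ by the previous paragraph, while the potential part is convex along the \emph{same} geodesic by the potential-energy case; adding the two inequalities yields \eqref{genconv} with $\lambda=0$. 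I expect the internal-energy step to be the main obstacle: both the density formula and the convexity of the integrand rest on the regularity of optimal maps (Aleksandrov's a.e.\ second differentiability of the $\phi_i$ and the area formula) and on the sharp interplay between the concavity of $(\det DT_s)^{1/d}$ and hypothesis \eqref{uconv}; once this is settled, the potential and interaction cases are elementary and the relative entropy follows by linearity.
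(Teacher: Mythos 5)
Your proposal is correct and is essentially the proof that the paper itself relies on (the notes state Theorem \ref{fconvgen} without proof, deferring to Chapter 9 of \cite{Ambrosio-Gigli-Savare05}): integrating the pointwise $\lambda$-convexity inequality over the $3$-plan $\mmu$ for $\VV$ and $\WW$ (with the hypothesis $\lambda_W\le 0$ entering exactly where you use it), and, for $\UU$, the representation $\mu_s^{2\freccia3}=((1-s)\tt^2+s\tt^3)_{\#}\mu_1$ with $\tt^i=\nabla\phi_i$, the Monge--Amp\`ere change of variables, the Minkowski concavity of $s\mapsto(\det DT_s(x))^{1/d}$ and hypothesis \eqref{uconv} --- the technical facts you flag (Aleksandrov differentiability of $\phi_i$, $\mu_1$-essential injectivity of $T_s$, and $\det D^2\phi_i>0$ $\mu_1$-a.e., which guarantees $\mu_s^{2\freccia3}\ll\LL^d$) are precisely the content of the lemmas proved there. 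The only point to tighten is the relative entropy: you argue under condition 1 of Proposition \ref{prop:ent} ($\gamma=\rme^{-V}\LL^d$ with $V$ finite and convex), whereas the theorem assumes the log-concavity condition \eqref{eq:54}, which also admits potentials $V$ taking the value $+\infty$ (your splitting argument works verbatim, since the pointwise convexity inequality holds for extended-real-valued convex $V$) and degenerate measures $\gamma$ concentrated on a proper affine subspace, for which one must run the same argument inside the affine hull of $\supp\gamma$.
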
  

\end{subsection}

\begin{subsection}{Gradient flows in $(\PP_2(\R^d),W_2)$ and evolutionary PDE's}

In this section we show some applications of the generation result \ref{genteo}
to the existence, well-posedness, and asymptotic behavior of nonnegative solutions $\rho:\R^d\times(0,+\infty)\freccia\R$ of evolutionary PDE's of the type
\begin{equation}\label{df}
 \frac{\partial}{\partial t}\rho-\nabla\cdot\biggl(\rho\nabla\frac{\delta \phi}{\delta \rho}\biggr)=0\quad\text{in }\R^d\times(0,+\infty),
\end{equation}
where
$ \frac{\delta \phi(\rho)}{\delta \rho}$ 
% \begin{equation}\label{intf}
% =-\nabla\cdot F_p(x,\rho,\nabla\rho)+F_z(x,\rho,\nabla\rho)
% \end{equation}
is the first variation of a suitable integral functional; here we
consider the case of functionals which are a positive linear combination of the three kinds of
contributions considered in the {E}xamples \ref{ex:potential}, \ref{ex:interaction}, and \ref{ex:internal},
i.e. $  \phi(\rho):=
\alpha_1\UU(\rho)+\alpha_2\VV(\rho)+\alpha_3\WW(\rho) $ where $\alpha_i\ge0$ and
\begin{equation}
  \label{eq:36}
  \begin{aligned}
    \UU(\rho):=&\int_\Rd U(\rho(x))\,\d x,\\
    \VV(\rho):=&\int_\Rd
    V(x)\rho(x)\, \d x,\\
    \WW(\rho):=&\frac 12\int_{\Rd\times \Rd}
    W(x-y)\rho(x)\rho(y)\,\d x\d y,
  \end{aligned}
\end{equation}
so that
\begin{equation}
  \label{eq:37}
  \frac{\delta \phi(\rho)}{\delta \rho}=\alpha_1U'(\rho)+\alpha_2V+\alpha_3W*\rho.
\end{equation}
In the particular cases of the Fokker-Planck equation ($\phi=\UU+\VV$ and
$U(r)=r\log r$)
\begin{equation}\label{FP}
 \frac{\partial}{\partial t}\rho-\nabla\cdot(\nabla\rho+\rho\nabla V)=0\quad\text{in }\R^d\times(0,+\infty),
\end{equation}
and of the nonlinear diffusion equations ($\phi=\UU$, $U(r)=\frac1{m-1}r^m$)
\begin{equation}\label{porous}
 \frac{\partial}{\partial t}\rho-\Delta\rho^m=0,\quad m\geq1-\frac{1}{d},
\end{equation}
the Wasserstein approach has been introduced by the remarkable papers of \textsc{Jordan-Kinderlehrer-Otto}
\cite{Jordan-Kinderlehrer-Otto98} and \textsc{Otto} \cite{Otto01} and then extended in many interesting directions,
covering a wide range of application{s}: see e.g.
\cite{Otto-Villani01,Agueh02,Carlen-Gangbo03,Carlen-Gangbo04,Carrillo-McCann-Villani01,Evans-Gangbo-Savin05,Otto-Westdickenberg05,Carrillo-McCann-Villani06,Carrillo-DiFrancesco-Lattanzio06,Ambrosio-Serfaty08,Blanchet-Calvez-Carrillo08,Fang-Shao-Sturm08,Ambrosio-Savare-Zambotti09,Carrillo-Lisini-Savare-Slepcev09,Natile-Savare09,Matthes-McCann-Savare09,Natile-Peletier-Savare-preprint10}.
  
The results presented here are just examples of the transport approach{.}

\begin{theorem}\label{teo:gfwass}
  Let $V,W,U$ be as in the examples \ref{ex:potential}, \ref{ex:interaction} and \ref{ex:internal},
  let $\VV,\WW,\UU$ be defined as in \eqref{eq:36}, and let
  $\phi:=\alpha_1\UU+\alpha_2\VV+\alpha_3\WW$.
  For every $\mu_0\in \PP_2(\R^d)$ there exists a unique solution $\mu_t\in \mathrm{Lip}_{\rm loc}(0,+\infty;\PP_2(\R^d))$
  satisfying $\EVI{\PP_2(\R^d)}{W_2}\phi\lambda$, $\lambda:=\alpha_2\lambda_V+\alpha_3\lambda_W$,
\begin{equation}
  \frac{1}{2}\frac{\d}{\dt}W_2^2(\mu_{t},\sigma)\leq\phi(\sigma)-\phi(\mu_{t})
  -\frac\lambda2W_2^2(\mu_t,\sigma)\quad\forevery\sigma\in D(\phi)
  \label{eq:45}
\end{equation}
with $\underset{t\downarrow0}{\lim}\,\mu_{t}=\mu_0$ in $\PP_2(\R^d)$;
the curve $\mu$ satisfies all the properties stated in Theorem \ref{thm:main1}, the continuity equation
\begin{gather}
  \label{eq:46}
  \frac\partial{\partial t}\mu_t+\nabla\cdot(\mu_t\,\vv_t)=0\quad \text{in }\R^d\times (0,+\infty),\quad\text{with}\\
  \vv_t\in \mathrm{Tan}_{\mu_t}\PP_2(\R^d)\quad\text{$\LL^1$-a.e.\ in $(0,+\infty)$ and}\quad
  t\mapsto \int_{\R^d}|\vv_t|^2\,\d\mu_t=|\mu_t'|^2\in L^\infty_{\rm loc}(0,+\infty),
\end{gather}
and for $\LL^1$-a.e.\ $t\in (0,+\infty)$
the velocity vector $\vv_t\in \mathrm{Tan}_{\mu_t}\PP_2(\R^d)$ satisfies the ``subdifferential inequality''
\begin{equation}
  \label{eq:47}
  \int_{\R^d}\la \vv_t(x),x-y\ra+\frac \lambda2|y-x|^2\,\d\ggamma_t(x,y)\le \phi(\sigma)-\phi(\mu_t)
  \quad\text{for every }\ggamma_{{t}}\in \Gamma_{\rm opt}(\mu_t,\sigma).
\end{equation}
\end{theorem}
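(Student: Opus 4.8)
The plan is to derive the whole statement from the abstract generation Theorem \ref{genteo}, the structural Theorem \ref{thm:main1}, and the differentiation formula \eqref{wassder}, so that beyond bookkeeping no genuinely new computation is needed. First I would verify that $\phi=\alpha_1\UU+\alpha_2\VV+\alpha_3\WW$ is proper, lower semicontinuous and $\lambda$-convex along generalized geodesics with $\lambda=\alpha_2\lambda_V+\alpha_3\lambda_W$. Properness and lower semicontinuity follow from the corresponding properties of the three summands (Examples \ref{ex:potential}, \ref{ex:interaction}, \ref{ex:internal}). For the convexity, Theorem \ref{fconvgen} grants that $\UU$ is $0$-convex, $\VV$ is $\lambda_V$-convex and $\WW$ is $\lambda_W$-convex along generalized geodesics; the crucial point is that each of these functionals is convex along \emph{every} generalized geodesic $\mu^{2\freccia3}_s$ induced by a plan $\mmu$ satisfying \eqref{mmu}, so that one common generalized geodesic serves all three. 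Summing the three inequalities \eqref{genconv} with the weights $\alpha_i\ge0$ then yields $\lambda$-convexity of $\phi$ along that curve. By Lemma \ref{convphiw2} the generating functional $\Phi(\tau,\mu_1;\cdot)$ satisfies the convexity assumption \eqref{convpphi}, so Theorem \ref{genteo} applies: for every $\mu_0\in\overline{D(\phi)}=\PP_2(\R^d)$ the minimizing movement scheme converges and its limit is the unique solution of $\EVI{\PP_2(\R^d)}{W_2}\phi\lambda$ with $\lim_{t\downarrow0}\mu_t=\mu_0$. In particular $\phi$ admits a $\lambda$-gradient flow, the solution inherits every property listed in Theorem \ref{thm:main1}, and the regularizing effect there gives $\mu\in\mathrm{Lip}_{\rm loc}(0,+\infty;\PP_2(\R^d))$.

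Being locally Lipschitz, $\mu$ is in particular a locally absolutely continuous curve in $(\PP_2(\R^d),W_2)$, so Theorem \ref{teo:ac} provides a Borel velocity field $\vv_t$ solving the continuity equation \eqref{eq:46} in the sense of distributions, with $\vv_t\in\mathrm{Tan}_{\mu_t}\PP_2(\R^d)$ and $\|\vv_t\|_{\rL^2(\mu_t;\R^d)}=|\mu_t'|$ for a.e.\ $t$; Proposition \ref{tangvec} guarantees that this $\vv_t$ is uniquely determined. The local Lipschitz bound on $\mu$ gives $t\mapsto\int_{\R^d}|\vv_t|^2\,\d\mu_t=|\mu_t'|^2\in L^\infty_{\rm loc}(0,+\infty)$.

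It remains to turn the metric EVI into the PDE-level inequality \eqref{eq:47}. I would fix $\sigma\in D(\phi)$ and $\ggamma\in\Gamma_{\rm opt}(\mu_t,\sigma)$ and apply the differentiation formula \eqref{wassder},
\begin{equation*}
  \frac12\frac{\d}{\dt}W_2^2(\mu_t,\sigma)=\int_{\R^d\times\R^d}\la x-y,\vv_t(x)\ra\,\d\ggamma(x,y),
\end{equation*}
valid for $\LL^1$-a.e.\ $t$. Inserting this into the EVI \eqref{eq:45} and using $W_2^2(\mu_t,\sigma)=\int_{\R^d\times\R^d}|x-y|^2\,\d\ggamma$ (optimality of $\ggamma$) gives
\begin{equation*}
  \int_{\R^d\times\R^d}\la \vv_t(x),x-y\ra\,\d\ggamma\le\phi(\sigma)-\phi(\mu_t)-\frac\lambda2\int_{\R^d\times\R^d}|x-y|^2\,\d\ggamma,
\end{equation*}
and transposing the last term yields exactly \eqref{eq:47}. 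Since \eqref{wassder} holds for \emph{every} optimal plan and the EVI holds a.e.\ in $t$, the inequality \eqref{eq:47} holds for every $\ggamma_t\in\Gamma_{\rm opt}(\mu_t,\sigma)$ and $\LL^1$-a.e.\ $t$.

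The routine assembly hides two genuine points, and these are where I expect the real work to lie. The first is that Theorem \ref{genteo} requires convexity of $\Phi$ along a \emph{common} family of curves for the combined functional; this is precisely why it is essential that Examples \ref{ex:potential}--\ref{ex:internal} be convex along every generalized geodesic sharing the base plan $\mmu$, and not merely along some geodesic depending on the individual functional, so that the three convexity inequalities can be added along a single curve. The second, and more delicate, is the passage from the EVI to the pointwise $\rL^2(\mu_t;\R^d)$ inequality \eqref{eq:47}: it rests entirely on the identification \eqref{wassder} of $\tfrac12\frac{\d}{\dt}W_2^2(\mu_t,\sigma)$ with a transport integral against the tangent field $\vv_t$, whose justification uses the first-order expansion \eqref{wassinf} of $W_2$ along absolutely continuous curves together with the uniqueness of the tangent velocity from Proposition \ref{tangvec}.
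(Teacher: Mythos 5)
Your proposal is correct and follows exactly the route the paper intends: combine Theorem \ref{fconvgen} (convexity along a common generalized geodesic, summed with weights $\alpha_i$), Lemma \ref{convphiw2}, and the generation Theorem \ref{genteo} for existence, uniqueness and the Lipschitz regularization, then invoke Theorem \ref{teo:ac}, Proposition \ref{tangvec} and the differentiation formula \eqref{wassder} to pass from the metric $\EVIshort\lambda$ to the continuity equation \eqref{eq:46} and the subdifferential inequality \eqref{eq:47}. The paper itself defers the details to \cite[Chap.~11]{Ambrosio-Gigli-Savare05}, and you correctly isolate the two genuinely nontrivial points (convexity along \emph{every} generalized geodesic so the three inequalities can be added, and the identification of $\tfrac12\frac{\d}{\dt}W_2^2$ with the transport integral), so your assembly matches the paper's argument.
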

We can give an explicit characterization of the system \eqref{eq:46}, \eqref{eq:47}.
Here we consider the simpler case when $U,V,W$ are differentiable and satisfy a
\emph{doubling condition}: for a function $f:\R^h\to \R$ it means that there exists a constant
$C>0$ such that
\begin{equation}
  \label{eq:42}
  f(x+y)\le C(1+f(x)+f(y))\quad\text{for every }x,y\in \R^h.
\end{equation}
We also set
\begin{equation}
  \label{eq:43}
  L_U(r):=rU'(r)-U(r)\quad \text{if }r>0,\quad L_U(0)=0.
\end{equation}
\begin{theorem}
  Under the same assumption{s} of the previous theorem, let us also suppose that
  $U,V,W$ are differentiable and satisfy the doubling condition \eqref{eq:42}.
  The locally Lipschitz curve $\mu$ characterized by \eqref{eq:45} (or by \eqref{eq:46}, \eqref{eq:47})
  solves the following evolutionary PDE's in $\R^d\times(0,+\infty)$\\
  \textbf{Transport equation,} $\phi=\VV$, $\vv_t=-\nabla V$:
  \begin{equation}
    \label{eq:48}
    \frac\partial{\partial_t}\mu_t-\nabla\cdot(\mu_t\nabla V)=0.
  \end{equation}
  \textbf{Nonlocal interaction equation,} $\phi=\WW$, $\vv_t=-(\nabla W)\ast \mu_t$
  \begin{equation}
    \label{eq:49}
    \frac\partial{\partial_t}\mu_t-\nabla\cdot(\mu_t(\nabla W\ast\mu_t))=0.
  \end{equation}
  \textbf{Fokker-Planck equation,} $\phi=\UU+\VV$, $U(r)=r\log r$, $-\mu_t\vv_t=\nabla \mu_t+\mu_t \nabla V$
  \begin{equation}
    \label{eq:50}
      \frac\partial{\partial_t}\mu_t-\nabla\cdot(\nabla\mu_t+\mu_t\nabla V)=0.
  \end{equation}
  In this case, $\mu_t=\rho_t\LL^d$ with $\rho_t\in W^{1,1}_{\rm loc}(\R^d)$ for $\LL^1$-a.e.\ $t\in (0,+\infty)$.\\
  \textbf{Nonlinear diffusion equation,} $\phi=\UU$,
  $\mu_t\vv_t=-\nabla L_U(\rho_t)$ where $\mu_t=\rho_t\LL^d\ll\LL^d$,
  \begin{equation}
    \label{eq:44}
    \frac\partial{\partial_t}\mu_t-\Delta(L_U(\rho_t))=0,\quad 
  \end{equation}
  with $L_U(\rho_t)\in W^{1,1}_{\rm loc}(\R^d)$  for $\LL^1$-a.e.\
  $t\in (0,+\infty)$.\\
  \textbf{Drift-diffusion with non local interactions,}
  $\phi=\UU+\VV+\WW$, $-\mu_t\vv_t=\nabla L_U(\rho_t)+{\mu_t}\nabla V+{\mu_t((\nabla
  W)\ast\mu_t)}$, $\mu_t=\rho_t\LL^d\ll\LL^d$,
  \begin{equation}
    \label{eq:51}
    \frac\partial{\partial_t}\mu_t-\nabla\cdot\big(\nabla L_U(\rho_t)+\mu_t\nabla
    V+\mu_t((\nabla W)\ast\mu_t)\big)=0
  \end{equation}
\end{theorem}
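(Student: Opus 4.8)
The plan is to identify the tangent velocity field $\vv_t$ produced by Theorem~\ref{teo:gfwass} with the explicit differential expressions listed above, and then to read off each PDE directly from the continuity equation~\eqref{eq:46}. The whole identification rests on testing the subdifferential inequality~\eqref{eq:47} against perturbations of the form $\sigma:=(\Id+\eps\xi)_{\#}\mu_t$, with $\xi\in C^\infty_c(\Rd;\Rd)$ arbitrary and $\eps>0$ small. For such $\sigma$ the plan $(\Id\times(\Id+\eps\xi))_{\#}\mu_t$ is asymptotically optimal as $\eps\down0$, so the left-hand side of~\eqref{eq:47} expands as $-\eps\int_{\Rd}\la\vv_t,\xi\ra\,\d\mu_t+o(\eps)$ (the quadratic $\lambda$-term being $O(\eps^2)$). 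Dividing by $\eps$, letting $\eps\down0$, and exploiting the freedom to change the sign of $\xi$ turns the one-sided inequality into the identity
\begin{equation*}
  \int_{\Rd}\la\vv_t,\xi\ra\,\d\mu_t=-\frac{\d}{\d\eps}\phi\big((\Id+\eps\xi)_{\#}\mu_t\big)\Big|_{\eps=0}
  \quad\forevery \xi\in C^\infty_c(\Rd;\Rd),
\end{equation*}
which pins down $\vv_t$ uniquely in $\mathrm{Tan}_{\mu_t}\PP_2(\Rd)$.

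First I would dispose of the potential and interaction parts. For $\phi=\VV$ one has $\VV(\sigma)-\VV(\mu_t)=\eps\int_{\Rd}\la\nabla V,\xi\ra\,\d\mu_t+o(\eps)$, hence $\vv_t=-\nabla V$ and~\eqref{eq:46} becomes the transport equation~\eqref{eq:48}. For $\phi=\WW$, the symmetry $W(-x)=W(x)$ gives the first variation $W\ast\mu_t$, so that $\vv_t=-\nabla(W\ast\mu_t)=-(\nabla W)\ast\mu_t$ and~\eqref{eq:46} yields~\eqref{eq:49}. In both cases the doubling condition~\eqref{eq:42} together with $\mu_t\in\PP_2(\Rd)$ guarantees the required integrability and the domination needed to differentiate under the integral sign.

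The delicate step is the internal energy $\phi=\UU$. Here I would first record that the finiteness of the metric slope $\MetricSlope{\UU}{\mu_t}<+\infty$ for a.e.\ $t$, guaranteed by the regularizing estimates of Theorem~\ref{thm:main1}, forces $\mu_t\ll\LL^d$ and $L_U(\rho_t)\in W^{1,1}_{\rm loc}(\Rd)$ with $\nabla L_U(\rho_t)\in L^2(\mu_t;\Rd)$; this is precisely where condition~\eqref{uconv} and the doubling assumption enter. The first-order expansion then follows from the change-of-variables formula: writing $\sigma=\tilde\rho\,\LL^d$ with $\tilde\rho\circ(\Id+\eps\xi)\,\det(I+\eps\rmD\xi)=\rho_t$ and $\det(I+\eps\rmD\xi)=1+\eps\,\nabla\!\cdot\xi+o(\eps)$, one obtains
\begin{equation*}
  \UU(\sigma)-\UU(\mu_t)=-\eps\int_{\Rd}L_U(\rho_t)\,\nabla\!\cdot\xi\,\d x+o(\eps)
  =\eps\int_{\Rd}\la\nabla L_U(\rho_t),\xi\ra\,\d x+o(\eps),
\end{equation*}
the last equality being the integration by parts legitimized by the $W^{1,1}_{\rm loc}$-regularity, with $L_U$ as in~\eqref{eq:43}. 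Comparing with the identity for $\int_{\Rd}\la\vv_t,\xi\ra\,\d\mu_t$ gives $\rho_t\vv_t=-\nabla L_U(\rho_t)$, i.e.\ $\mu_t\vv_t=-\nabla L_U(\rho_t)$, so~\eqref{eq:46} becomes the nonlinear diffusion equation~\eqref{eq:44}; the Fokker--Planck case~\eqref{eq:50} is the specialization $U(r)=r\log r$, for which $L_U(r)=r$ and hence $\rho_t\in W^{1,1}_{\rm loc}(\Rd)$.

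Finally, the general functional $\phi=\alpha_1\UU+\alpha_2\VV+\alpha_3\WW$ is handled by linearity of the first variation: the three computations add up to $-\mu_t\vv_t=\alpha_1\nabla L_U(\rho_t)+\alpha_2\,\mu_t\nabla V+\alpha_3\,\mu_t((\nabla W)\ast\mu_t)$, and substitution into~\eqref{eq:46} produces the drift-diffusion equation~\eqref{eq:51}. The main obstacle is concentrated in the internal-energy step: establishing the absolute continuity $\mu_t\ll\LL^d$ and the $W^{1,1}_{\rm loc}$-regularity of $L_U(\rho_t)$ that makes the integration by parts rigorous. The potential and interaction contributions, by contrast, are essentially routine perturbation arguments.
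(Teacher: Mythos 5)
Your overall strategy (identify $\vv_t$ by perturbing the subdifferential inequality \eqref{eq:47} and then read each PDE off the continuity equation \eqref{eq:46}) is the same as the paper's, but the step on which everything rests contains a genuine error. The inequality \eqref{eq:47} is quantified only over \emph{optimal} plans $\ggamma_t\in\Gamma_{\mathrm{opt}}(\mu_t,\sigma)$, so with $\sigma=(\Id+\eps\xi)_{\#}\mu_t$ you must control the actual optimal plan, and your justification --- that the induced plan $(\Id\times(\Id+\eps\xi))_{\#}\mu_t$ is ``asymptotically optimal'' as $\eps\down0$ --- is false for general $\xi\in C^\infty_c(\Rd;\Rd)$. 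Counterexample: let $\mu_t$ be the uniform measure on a disc in $\R^2$ and $\xi(x)=\eta(|x|^2)(-x_2,x_1)$ a rotational field, which satisfies $\nabla\cdot(\mu_t\xi)=0$. The flow of $\xi$ preserves $\mu_t$ exactly and differs from $\ii+\eps\xi$ by $O(\eps^2)$ in $L^2(\mu_t)$, so $W_2(\mu_t,\sigma)=O(\eps^2)$, while the induced plan has cost $\eps\|\xi\|_{L^2(\mu_t)}$: the ratio of the two costs blows up. Consequently your expansion of the left-hand side of \eqref{eq:47} as $-\eps\int\la\vv_t,\xi\ra\,\d\mu_t+o(\eps)$ is unsupported; with the true optimal plan the first-order term involves the projection of $\xi$ onto $\mathrm{Tan}_{\mu_t}\PP_2(\Rd)$, and making that rigorous requires the nontrivial first-order analysis of optimal plans of \cite[Chap.~8]{Ambrosio-Gigli-Savare05}, not a soft optimality remark.

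The repair is exactly the paper's device: use only \emph{gradient} perturbations. For $\zeta\in C^\infty_c(\Rd)$ and $\tt_\eps:=\ii+\eps\nabla\zeta$ with $|\eps|\max_{\Rd}\|\rmD^2\zeta\|<1$, the map $\tt_\eps$ is the gradient of the convex function $\frac12|x|^2+\eps\zeta(x)$, so $(\ii\times\tt_\eps)_{\#}\mu_t$ is \emph{exactly} optimal for both signs of $\eps$; this turns the one-sided inequality into the identity for $\int\la\vv_t,\nabla\zeta\ra\,\d\mu_t$, and gradient test fields are all you need, since inserting that identity into \eqref{eq:46} yields precisely the distributional form of \eqref{eq:48}--\eqref{eq:51}. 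With this restriction your first-variation computations (potential, interaction, internal energy via change of variables) go through. A secondary difference: in the internal-energy step the paper never integrates by parts on the density; the change of variables gives the term $-\int\rho_t\log\bigl(\mathrm{det}[\Id+\eps\rmD^2\zeta]\bigr)\,\d\LL^d$, whose $\eps$-derivative places $\Delta\zeta$ on the test function, so no Sobolev regularity of $L_U(\rho_t)$ is needed for the very weak formulation. Your alternative route, ``finite slope forces $\mu_t\ll\LL^d$ and $L_U(\rho_t)\in W^{1,1}_{\rm loc}$ with $\nabla L_U(\rho_t)\in L^2(\mu_t;\Rd)$'', is a deep theorem of \cite[Chap.~10]{Ambrosio-Gigli-Savare05} which you invoke as a black box; citing it is legitimate for the $W^{1,1}_{\rm loc}$ claims in the statement (which the present paper itself defers to \cite[Chap.~11]{Ambrosio-Gigli-Savare05}), but it is not a consequence of Theorem \ref{thm:main1} alone, as your write-up suggests.
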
 
We refer to \cite[Chap.~11]{Ambrosio-Gigli-Savare05} for the proofs
and for more general and detailed results;
here we just give a sketch of the argument showing that \eqref{eq:46},
\eqref{eq:47} yield \eqref{eq:50} when $\phi=\UU+\VV$ in the case of
$U(r)=r\log r$.

% By \eqref{wassderac}, $\mu_t$ is a solution of the EVI \eqref{eviheat} if and only if
% \begin{equation*}
% \int_{\R^d}\langle \vv_t(x), x-\tt_{\mu_t}^{\sigma}(x)\rangle\,\d \mu_t(x)\leq\mathrm{Ent}_{\LL^d}(\sigma)-\mathrm{Ent}_{\LL^d}(\mu_t).
% \end{equation*}
% Indeed, by Theorem \ref{genteo}, $\mu_t\in D(\mathrm{Ent}_{\LL^d})\subset\PP_2^a(\R^d)$, thus $\Gamma_{\mathrm{opt}}(\mu_t,\sigma)=\{\tt_{\mu_t}^{\sigma}\}$.

% \noindent If we define 
% \begin{equation*}
%  \xi:=-\vv_t\in\mathrm{Tan}_{\mu_t}\PP_2(\R^d),
% \end{equation*}
% we have
% \begin{equation*}
%  \int_{\R^d}\langle\xi(x),\tt_{\mu_t}^{\sigma}(x)-x\rangle\,\d \mu_t(x)\leq\mathrm{Ent}_{\LL^d}(\sigma)-\mathrm{Ent}_{\LL^d}(\mu_t).
% \end{equation*}
Let us fix a time $t>0$ where \eqref{eq:47} holds, a smooth test function $\zeta\in C^\infty_{\rm c}(\R^d)$,
and $\tt_\eps:=\ii+\eps\nabla \zeta$. If
$|\eps|\max_{\R^d}\|\rmD^2\zeta\|<1$ the coupling
$\ggamma_\eps:=(\ii,\tt_\eps)_\#\mu_t$ is optimal between $\mu_t$ and
$(\tt_\eps)_\#\mu_t$ so that \eqref{eq:47} yields
\begin{equation*}
  -\eps\int_{\R^d}\langle\vv_t(x), \nabla\zeta(x)\rangle\,\d
  \mu_t(x)\leq \phi((\tt_\eps)_\#\mu_t)-\phi(\mu_t).
\end{equation*}
Setting
\begin{equation*}
  \rho_t:=\frac{\d\mu_t}{\d\LL^d},\quad\rho_t^{\varepsilon}:=\frac{\d(\tt_\eps)_{\#}\mu_t}{\d\LL^d}
\end{equation*}
we get
\begin{align*}
  -\eps\int_{\R^d}\langle\vv_t, \nabla\zeta\rangle\,\d
  \mu_t
  \leq \int_{\R^d}\rho_t^{\varepsilon}\log\rho_t^{\varepsilon}\,\d
  \LL^d-\int_{\R^d}\rho_t\log\rho_t\,\d \LL^d
  +\int_{\R^d} \big(V(\tt_\eps(x))-V(x)\big)\,\d\mu_t(x)
 \end{align*}
Applying the change of variables formula
\begin{equation*}
 \rho_t^{\varepsilon}(\tt_\eps(x))\,\mathrm{det}[\Id+\varepsilon \rmD^2\zeta(x)]=\rho_t(x),
\end{equation*}
we obtain
\begin{equation}\label{heatineq}
  -\eps\int_{\R^d}\langle\vv_t,\nabla\zeta\rangle\,\d\mu_t\leq
  -\int_{\R^d}\rho(x)\log\bigl(\mathrm{det}[\Id+\varepsilon
  \rmD^2\zeta(x)]\bigr)\,\d \LL^d
  +\int_{\R^d} \big(V(\tt_\eps(x))-V(x)\big)\,\d\mu_t(x)
\end{equation}
Finally, dividing by $\eps$ and taking the limit of \eqref{heatineq} as $\varepsilon$ tends to $0$ we get
\begin{equation*}
    -\int_{\R^d}\langle\vv_t,\nabla\zeta\rangle\,\d\mu_t=
    \int_{\R^d} \Big(-\Delta \zeta(x)+\nabla V(x)\cdot \nabla\zeta\Big)\,\d\mu_t\quad\forevery \zeta\in C^{\infty}_c(\R^d),
\end{equation*}
so that $\mu$ satisfies the distributional formulation of \eqref{eq:50}.
\end{subsection}

\subsection{The heat flow on Riemannian manifolds and metric-measure spaces}
We conclude these notes by giving a short account of possible applications of the
Wasserstein setting to the generation of the heat flow in Riemannian
manifolds and metric-measure spaces.

Let us start with a compact and smooth Riemannian manifold
$(M,g)$; we denote by $\sfd_g$ its Riemannian distance and by $\gamma=\mathrm{Vol}_g\in
\PP(M)$ its
(normalized) volume measure.

In $\PP_2(M)$ we consider the Relative Entropy functional
$\mathrm{Ent}_\gamma$ as in \eqref{ent}. \textsc{Von Renesse-Sturm}
\cite{Sturm-VonRenesse05}
proved
\begin{theorem}
  \label{thm:VRS}
  The Relative Entropy functional $\mathrm{Ent}_\gamma$ is
  geodesically $\lambda$-convex in $\PP_2(M)$ if and only if $M$
  satisfies the lower Ricci curvature bound
  \begin{equation}\label{eq:52}
    \mathrm{Ric}(M)\ge
    \lambda\quad
    \text{i.e.\quad $\mathrm{Ric}_x(v,v)\ge \lambda |v|^2_g$\quad for all
      $x\in M$ and $v\in \mathrm{Tan}_x(M)$.}
  \end{equation}
\end{theorem}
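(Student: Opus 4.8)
The plan is to prove both implications by transferring everything onto the Jacobian of the optimal transport map along Wasserstein geodesics, the point being that Bochner's formula makes the Ricci tensor appear precisely in the second variation of the volume distortion. Throughout let $n=\dim M$; since $\gamma=\mathrm{Vol}_g$ is a fixed constant multiple of the volume measure, replacing the volume by $\gamma$ only shifts $\mathrm{Ent}_\gamma$ by an additive constant and is irrelevant for convexity.

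\textbf{Sufficiency }($\mathrm{Ric}\ge\lambda\Rightarrow$ convexity)\textbf{.} First I would restrict to a pair $\mu_0=\rho_0\gamma$, $\mu_1=\rho_1\gamma$ in $D(\mathrm{Ent}_\gamma)$ (otherwise the right-hand side of \eqref{lambdaconv} is $+\infty$). By McCann's theorem on Riemannian manifolds (the analogue of the Brenier--Knott--Smith result recalled above) the connecting geodesic is $\mu_s=(T_s)_\#\mu_0$ with $T_s(x)=\exp_x\!\big(s\nabla\psi(x)\big)$ for a $\sfd_g^2/2$-concave Kantorovich potential $\psi$, and $\rho_0(x)=\rho_s(T_s(x))\,\mathcal J_s(x)$, where $\mathcal J_s$ is the Jacobian of $T_s$ relative to $\gamma$. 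The change of variables in \eqref{ent} then gives
\[
\mathrm{Ent}_\gamma(\mu_s)=\mathrm{Ent}_\gamma(\mu_0)-\int_M \log\mathcal J_s(x)\,\d\mu_0(x),
\]
so the whole problem reduces to concavity properties of $s\mapsto\log\mathcal J_s(x)$.

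Next I would invoke the matrix Riccati equation for the Jacobi fields along the geodesic $s\mapsto T_s(x)$. Writing $D_x(s):=\log\mathcal J_s(x)$, one has $D_x'(s)=\operatorname{tr}U(s)$ with $U$ symmetric (the optimal map is curl-free) solving $U'+U^2+R=0$, $\operatorname{tr}R=\mathrm{Ric}(\dot\gamma,\dot\gamma)$; combining $\operatorname{tr}(U^2)\ge\frac1n(\operatorname{tr}U)^2$ with the hypothesis $\mathrm{Ric}\ge\lambda$ yields, for $\mu_0$-a.e.\ $x$,
\[
D_x''(s)\le -\tfrac1n\big(D_x'(s)\big)^2-\mathrm{Ric}\big(\dot\gamma_x(s),\dot\gamma_x(s)\big)\le -\lambda\,|\dot\gamma_x|^2,
\]
where $|\dot\gamma_x|=\sfd_g(x,T_1(x))$ is the constant speed. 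Hence $s\mapsto-\log\mathcal J_s(x)$ is $\lambda|\dot\gamma_x|^2$-convex; integrating against $\mu_0$ and using $\int_M|\dot\gamma_x|^2\,\d\mu_0=W_2^2(\mu_0,\mu_1)$ gives exactly the $\lambda$-convexity inequality \eqref{lambdaconv} for $\mathrm{Ent}_\gamma$.

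\textbf{Necessity }(convexity $\Rightarrow\mathrm{Ric}\ge\lambda$)\textbf{.} For the converse I would localize: fix $x_0\in M$ and $v\in\mathrm{Tan}_{x_0}(M)$, and test the assumed $\lambda$-convexity on a family $\mu_0^\eps$ concentrating at $x_0$ and transported almost rigidly in the direction $v$. Running the computation above in reverse, $\lambda$-convexity forces $-\int D_x''(0)\,\d\mu_0^\eps\ge\lambda\int|\dot\gamma_x|^2\,\d\mu_0^\eps$ in a second-difference sense. As $\mu_0^\eps\to\delta_{x_0}$ with an isotropic shrinking family, the first-order (mean-curvature) contribution and the $\frac1n(D')^2$ term vanish in the limit, while the normal-coordinate expansion of $\log\mathcal J_s$ leaves only the quadratic Ricci term; passing to the limit isolates $\mathrm{Ric}_{x_0}(v,v)\ge\lambda|v|^2$, which is \eqref{eq:52}.

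\textbf{Main obstacle.} The genuinely delicate point is the low regularity: $\psi$ is only semiconcave, so $\nabla^2\psi$ exists merely $\gamma$-a.e.\ in the Aleksandrov sense, $T_s$ can meet the cut locus, and $\mathcal J_s$ must be treated as an a.e.\ object; justifying the Monge--Amp\`ere change of variables and the pointwise Riccati inequality in this setting, and, in the necessity direction, cleanly separating the Ricci term from the competing $\frac1n(D')^2$ and boundary contributions, is where the real work lies. Compactness and smoothness of $M$ keep these issues controlled, via the semiconcavity estimates for $\psi$ and the a.e.\ differentiability of the exponential map.
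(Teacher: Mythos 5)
You should first be aware that the paper itself contains no proof of Theorem \ref{thm:VRS}: it is stated as a known result and attributed to von Renesse--Sturm \cite{Sturm-VonRenesse05}, so there is no internal argument to compare yours against. Judged on its own merits, your sufficiency direction follows exactly the standard route (McCann's Riemannian polar factorization, the change-of-variables identity $\mathrm{Ent}_\gamma(\mu_s)=\mathrm{Ent}_\gamma(\mu_0)-\int\log\mathcal J_s\,\d\mu_0$, and the Riccati comparison $D''\le-\tfrac1n(D')^2-\mathrm{Ric}(\dot\gamma,\dot\gamma)$), and it is correct: integrating the pointwise inequality $(-D_x)''\ge\lambda|\dot\gamma_x|^2$ against $\mu_0$ and using $\int|\dot\gamma_x|^2\,\d\mu_0=W_2^2(\mu_0,\mu_1)$ gives precisely \eqref{lambdaconv}, and the technical lemmas you defer (a.e.\ validity of the Monge--Amp\`ere equation, Aleksandrov second differentiability of the potential, avoidance of the cut locus for $\mu_0$-a.e.\ transport ray, propagation of symmetry along the Riccati flow) are exactly the ones supplied by Cordero-Erausquin--McCann--Schmuckenschl\"ager and used in \cite{Sturm-VonRenesse05}. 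So that half is a faithful, essentially complete sketch of the known proof.

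The necessity direction, however, is a plan rather than a proof, and the gap sits exactly where you wave at it. The assumed convexity gives $\int\bigl(\mathrm{tr}(U^2)+\mathrm{Ric}(\dot\gamma,\dot\gamma)\bigr)\,\d\mu_0^\eps\ge\lambda\int|\dot\gamma|^2\,\d\mu_0^\eps$, and since $\mathrm{tr}(U^2)\ge0$ enters with the \emph{favorable} sign, it can mask a Ricci deficit; the whole point is to exhibit transports for which $\int\mathrm{tr}(U^2)\,\d\mu_0^\eps=o(\delta^2)$, where $\delta$ is the transport distance. Concretely one must (i) build an explicit smooth potential $\psi$ (e.g.\ linear in normal coordinates at $x_0$, scaled by $\delta$), (ii) verify it is $\sfd_g^2/2$-concave for small $\delta$ so that $T(x)=\exp_x(\nabla\psi(x))$ is genuinely the optimal map and the curve $\mu_s=(T_s)_\#\mu_0^\eps$ is the Wasserstein geodesic to which the hypothesis applies, and (iii) estimate $U(0)=\mathrm{Hess}\,\psi=O(\delta\eps)$ on $B_\eps(x_0)$ and $U(s)=U(0)+O(\delta^2)$ via the Riccati flow, so that $\mathrm{tr}(U^2)=O(\delta^2\eps^2+\delta^4)=o(\delta^2)$ while the Ricci term is $\delta^2\mathrm{Ric}_{x_0}(v,v)+o(\delta^2)$, and then send $\eps,\delta\downarrow0$. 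Your assertion that ``the first-order contribution and the $\frac1n(D')^2$ term vanish in the limit'' is precisely statement (iii); without the construction in (i)--(ii) guaranteeing optimality of the test geodesic and the quantitative scaling in (iii), the converse implication is not established. This is the hard content of the von Renesse--Sturm theorem in that direction, not a routine limit, so as written the proof of necessity is incomplete, though the strategy is the right one.
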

{In this case, it is} possible to show (see
\cite{Otto-Villani00,Otto-Westdickenberg05,Daneri-Savare08,Ohta09,Villani09,Erbar10})
that the Relative Entropy
functional $\mathrm{Ent}_\gamma$ generates a $\lambda$-gradient flow
$\sfS_t:\PP_2(M)\to \PP_2(M)$ 
according to definition \ref{def:GFlow}, which coincides with the
classical heat flow on $M$.
\begin{theorem}
  The relative entropy functional $\mathrm{Ent}_\gamma$ generates
  a $\lambda$-gradient flow $\sfS_t$ in $\PP_2(M)$ according to
  Definition \ref{def:GFlow} (and thus satisfying all the properties
  stated in Theorems
  \ref{thm:main1} and \ref{thm:Error_Estimate}).
  A curve $\mu_t\in \PP_2(M)$ is a
  solution 
  of $\EVI M{{\sfd_g}}{\mathrm{Ent}_\gamma}\lambda$ if and only if its density
  $\rho_t=\d\mu_t/\d\gamma$ solves the Heat equation
  \begin{equation*}
    \frac{\partial}{\partial t}\rho_t-\Delta_g\,\rho_t=0\quad\text{in $M\times(0,+\infty)$},
  \end{equation*}
  where $\Delta_g$ is the Laplace-Beltrami operator on $M$.
\end{theorem}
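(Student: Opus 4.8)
The plan is to separate the statement into a \emph{generation} part and an \emph{identification} part. For the generation of the $\lambda$-gradient flow I would appeal to the abstract Theorem~\ref{genteofinal}, whose two structural hypotheses on the metric space are readily verified for $(\PP_2(M),W_2)$. Since $M$ is a compact Riemannian manifold it is, in particular, an Aleksandrov space of curvature bounded from below with finite diameter; hence it is $\mathrm{K}$-SC for a suitable constant $\mathrm{K}$ and it satisfies the local angle condition (LAC). By the stability of both properties under the Wasserstein construction (recorded in the lists of examples of $\mathrm{K}$-SC and (LAC) spaces), $(\PP_2(M),W_2)$ is itself $\mathrm{K}$-SC and satisfies (LAC). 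The relative entropy $\eg$ is proper and lower semicontinuous, and by the Von Renesse--Sturm Theorem~\ref{thm:VRS} the bound $\mathrm{Ric}(M)\ge\lambda$ is exactly what makes $\eg$ geodesically $\lambda$-convex. Theorem~\ref{genteofinal} then produces, for every $\mu_0\in\PP_2(M)$, a unique solution of $\EVI M{\sfd_g}{\eg}\lambda$, i.e.\ the $\lambda$-gradient flow $\sfS_t$, which automatically enjoys all the properties collected in Theorem~\ref{thm:main1}.

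For the identification I would show $(\Rightarrow)$ first. Let $\mu_t:=\sfS_t\mu_0$ and $\rho_t:=\d\mu_t/\d\gamma$. By the regularizing effect of Theorem~\ref{thm:main1} the curve $t\mapsto\mu_t$ is locally Lipschitz and $|\partial\eg|(\mu_t)<+\infty$ for $t>0$; by the Riemannian analogue of Theorem~\ref{teo:ac} it admits a velocity field $\vv_t\in\mathrm{Tan}_{\mu_t}\PP_2(M)$ solving the continuity equation $\partial_t\mu_t+\mathrm{div}_g(\vv_t\mu_t)=0$. The aim is to prove $\vv_t=-\nabla_g\log\rho_t$. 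Mimicking the Euclidean Fokker--Planck computation at the end of the previous subsection, I would fix a time $t$ at which the Riemannian version of the subdifferential inequality \eqref{eq:47} holds, fix $\zeta\in C^\infty(M)$, and perturb through $\tt_\eps(x):=\exp_x\!\big(\eps\,\nabla_g\zeta(x)\big)$, which for small $|\eps|$ induces the optimal coupling $(\ii,\tt_\eps)_\#\mu_t$ between $\mu_t$ and $(\tt_\eps)_\#\mu_t$.

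The Riemannian change of variables $\rho^\eps_t(\tt_\eps(x))\,J_\eps(x)=\rho_t(x)$, with $J_\eps$ the Jacobian of $\tt_\eps$ relative to $\gamma$, collapses the entropy increment to $\eg((\tt_\eps)_\#\mu_t)-\eg(\mu_t)=-\int_M\log J_\eps\,\d\mu_t$; since $\tfrac{\d}{\d\eps}\log J_\eps\big|_{\eps=0}=\Delta_g\zeta$, the first variation equals $-\int_M\Delta_g\zeta\,\d\mu_t$. Inserting this into the subdifferential inequality, dividing by $\eps$ and letting $\eps\to0^{\pm}$ (using $\zeta$ and $-\zeta$) yields the identity $\int_M\langle\vv_t,\nabla_g\zeta\rangle\,\d\mu_t=\int_M\Delta_g\zeta\,\d\mu_t$ for every $\zeta$; by self-adjointness of $\Delta_g$ the right-hand side equals $-\int_M\langle\nabla_g\zeta,\nabla_g\rho_t\rangle\,\d\gamma$, which forces $\rho_t\vv_t=-\nabla_g\rho_t$, i.e.\ $\vv_t=-\nabla_g\log\rho_t$. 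Substituting back into the continuity equation turns it into $\partial_t\rho_t-\Delta_g\rho_t=0$ in the distributional sense. For the converse $(\Leftarrow)$ I would invoke uniqueness: the heat equation on the compact manifold $M$ has a unique solution with prescribed initial datum, so any $\mu_t=\rho_t\gamma\in\PP_2(M)$ solving it must coincide with the flow just constructed and therefore solves the EVI.

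I expect the identification step to be the main obstacle, and within it the regularity bookkeeping: one has to guarantee that $\rho_t$ is regular enough (e.g.\ $\rho_t\in W^{1,1}$ with $\nabla_g\log\rho_t\in\rL^2(\mu_t)$, which for $t>0$ is ensured by the finiteness of the slope coming from the a priori estimate \eqref{eq:2bis}) so that the integration by parts and the change of variables are legitimate, and that the first-order Jacobian expansion $\tfrac{\d}{\d\eps}\log J_\eps|_{\eps=0}=\Delta_g\zeta$ is justified while controlling the curvature contributions to the Jacobian of the exponential-map perturbation. By contrast, the existence part is comparatively routine once the two geometric conditions on $\PP_2(M)$ are recognized as inherited from $M$.
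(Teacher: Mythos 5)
Your proposal cannot be checked against a proof in the paper, because the paper does not prove this theorem: it is quoted as a known result, with the argument delegated to the literature (\cite{Otto-Villani00,Otto-Westdickenberg05,Daneri-Savare08,Ohta09,Villani09,Erbar10}). What you propose is nonetheless a sound outline, and it is the natural assembly of the paper's own tools. The generation half is exactly what the closing theorem of the notes (the ``intrinsic construction'' on metric-measure spaces) asserts, specialized to $X=M$: one applies Theorem \ref{genteofinal} after observing that a compact Riemannian manifold has sectional curvature bounded below and finite diameter, hence is $\mathrm{K}$-SC and satisfies (LAC); that both properties lift from $M$ to $(\PP_2(M),W_2)$ by the stability statements in the lists of examples; and that Theorem \ref{thm:VRS} converts $\mathrm{Ric}(M)\ge\lambda$ into geodesic $\lambda$-convexity of the proper, l.s.c.\ functional $\mathrm{Ent}_\gamma$. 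The identification half is the Riemannian transcription of the perturbation argument the notes sketch for the Fokker--Planck equation \eqref{eq:50}; your sign bookkeeping there is correct. The converse-by-uniqueness step also works on a compact manifold: by the contraction property of Theorem \ref{thm:main1}, every \EVIname\ solution restricted to $[s,+\infty)$ is a trajectory of $\sfS$, and nonnegative distributional solutions of the heat equation with a prescribed $L^1$ trace at $t=s$ are unique, so the two classes of curves coincide.

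The one caution concerns where the real work is hidden, precisely at the points you flag. The results you invoke as ``Riemannian analogues'' --- Theorem \ref{teo:ac}, the derivative formula \eqref{wassder}, the subdifferential inequality \eqref{eq:47}, the optimality of the coupling $(\ii,\tt_\eps)_\#\mu_t$ with $\tt_\eps=\exp(\eps\nabla_g\zeta)$, the first-order Jacobian expansion, and the $W^{1,1}$ regularity of $\rho_t$ coming from finiteness of the slope (i.e.\ of the Fisher information) --- are stated in these notes only for $X=\R^d$, and their manifold versions are not formal corollaries: one needs McCann-style $c$-concavity of small smooth potentials, Jacobi-field control of the Jacobian of the exponential map, and the Riemannian Wasserstein calculus. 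Establishing these is the actual content of the cited references, notably \cite{Erbar10} and \cite{Villani09}. So your outline is correct and identifies the right obstacles, but as written it transfers the genuinely hard steps into unproved analogues rather than discharging them; a complete proof would have to import or reprove those statements explicitly.
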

The adimensionality of the form of the Entropy functional \eqref{ent} and the purely metric character of the EVI suggest that one can use them to define a heat flow on more general \emph{measure-metric} spaces $(X,\sfd,\gamma)${,} where
$(X,\sfd)$ is a complete and separable metric space and $\gamma\in\PP(X)$. Indeed, as it has been often pointed out in the previous sections, the EVI formulation gives nice regularity, stability and asymptotic properties for the related flow.
We briefly sketch two possible approaches:

\subsubsection*{Approximation by measured Gromov-Hausdorff convergence.}
We consider a sequence of smooth and compact Riemannian manifold{s} $(M^h,\sfd^h,\mathrm{Vol}^h)$ converging to a limit
measure-metric space $(X,\sfd,\gamma)$ in the measured Gromov-Hausdorff convergence: it means
\cite{Sturm05} that
a sequence $\{{\hat\sfd}^h\}_{k\in \N}$ of (complete, separable) \emph{coupling semidistances}  on the disjoint union
$M^h\sqcup X$ exists such that the restriction of $\hat\sfd^h$ on $M^h$ (resp.~$X$) coincides with
$\sfd^h$ (resp.~$\sfd$) and
\begin{equation}
  \label{eq:53}
  \lim_{k\up\infty} \hat W_{2}^h(\mathrm{Vol}^h,\gamma)=0,\quad
  \hat W_{2}^h\text{ is the Wasserstein distance on }\PP_2(M^h\sqcup X) \text{ induced by }{\hat\sfd}^h
\end{equation}
A sequence $\mu^h\in \PP_2(M^h)$ converges to $\mu\in \PP_2(X)$ if $\lim_{k\up+\infty}\hat W_{2}^h(\mu^h,\mu)=0$.
Adapting the arguments of Theorem \ref{thm:main_stability1}
it is possible to prove the following asym{p}totic result:
\begin{theorem}[\cite{Savare07}]
  Let us assume that the compact Riemannian manifolds $M^h$ satisfy the uniform lower bound on the Ricci curvature
  $\mathrm{Ric}(M^h)\ge\lambda$ for some $\lambda\in \R$ independent of $k$ and converge
  to $(X,\sfd,\gamma)$ in the measured Gromov-Hausdorff sense.
  Then the Relative Entropy functional $\mathrm{Ent}_\gamma$ admits a $\lambda$-gradient flow $\sfS_t$
  on $\PP_2(X)$ and for every sequence of initial measures $\mu_{0}^h\in \PP_2(M^h)$ converging to
  $\mu_0\in \PP_2(X)$ the corresponding solution $\mu_t^h$ of the Heat flow on $M^h$
  converges to $\sfS_t(\mu_0)$ in $\PP_2(X)$ for every $t>0$.
\end{theorem}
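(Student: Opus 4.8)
The plan is to adapt the four-step stability argument of Theorem~\ref{thm:main_stability1} to the present situation, where the relative-entropy functionals live on the \emph{varying} Wasserstein spaces $\PP_2(M^h)$ rather than on a single fixed space. First I would record that on each $M^h$ the relevant $\lambda$-gradient flow already exists: since $\mathrm{Ric}(M^h)\ge\lambda$, Theorem~\ref{thm:VRS} gives that $\mathrm{Ent}_{\mathrm{Vol}^h}$ is geodesically $\lambda$-convex on $\PP_2(M^h)$, and the generation theorem for the heat flow on compact Riemannian manifolds (stated just above) produces the $\lambda$-gradient flow $\sfS^h_t$, which coincides with the classical heat flow on $M^h$. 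The coupling semidistances ${\hat\sfd}^h$ supplied by the measured Gromov-Hausdorff convergence allow me to compare measures on the different $M^h$ and on $X$ through a single Wasserstein distance $\hat W_2^h$ on $\PP_2(M^h\sqcup X)$, so that all the curves $t\mapsto\sfS^h_t(\mu^h_0)$ may be regarded inside one ambient metric structure.

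The conceptual heart of the adaptation is the second step: I would show that the family $\mathrm{Ent}_{\mathrm{Vol}^h}$ $\Gamma$-converges to $\mathrm{Ent}_\gamma$ with respect to the coupled convergence $\mu^h\to\mu \iff \hat W_2^h(\mu^h,\mu)\to0$. The $\liminf$ inequality follows from the joint lower semicontinuity of the relative entropy $\mathrm{Ent}_\nu(\mu)$ in the pair $(\mu,\nu)$ under weak convergence, combined with $\hat W_2^h(\mathrm{Vol}^h,\gamma)\to0$ from \eqref{eq:53}. For the recovery sequence I would, given $\mu\in D(\mathrm{Ent}_\gamma)$ with density $\rho=\d\mu/\d\gamma$ (first reduced to bounded $\rho$), transport $\rho$ back along optimal couplings for $\hat W_2^h$ between $\mathrm{Vol}^h$ and $\gamma$, producing $\mu^h\ll\mathrm{Vol}^h$ with $\hat W_2^h(\mu^h,\mu)\to0$ and $\mathrm{Ent}_{\mathrm{Vol}^h}(\mu^h)\to\mathrm{Ent}_\gamma(\mu)$. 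Coercivity of the family is automatic from the uniform lower bound $\mathrm{Ent}_{\mathrm{Vol}^h}\ge0$.

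Third, I would run the compactness-and-characterization scheme exactly as in Theorem~\ref{thm:main_stability1}. Fixing $\mu^h_0\to\mu_0$ and setting $\mu^h_t:=\sfS^h_t(\mu^h_0)$, the regularizing/a priori estimate \eqref{eq:2bis} (valid on each $\PP_2(M^h)$ with the same $\lambda$) bounds $\mathrm{Ent}_{\mathrm{Vol}^h}(\mu^h_t)$, the metric slopes, and hence the metric speeds, uniformly in $h$ on each interval $[S,T]\subset(0,+\infty)$; this makes the curves uniformly Lipschitz in $\hat W_2^h$ and, by $\Gamma$-coercivity, relatively compact for each fixed $t$, so that an Ascoli-Arzel\`a argument extracts a subsequence with $\mu^h_t\to\mu^\infty_t$ locally uniformly in time. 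Passing to the limit in the derivative-free inequality \eqref{eq:cap1:78bis} for $\mathrm{Ent}_{\mathrm{Vol}^h}$ --- using the recovery sequence for a fixed competitor $v\in D(\mathrm{Ent}_\gamma)$ and the $\liminf$ inequality for the term $\mathrm{Ent}_{\mathrm{Vol}^h}(\mu^h_t)$ --- yields \eqref{eq:cap1:78bis} for $\mathrm{Ent}_\gamma$ on $\PP_2(X)$; Theorem~\ref{thm:uniqueness} then shows that $\mu^\infty$ solves \eqref{eq:EVI}, and the initial condition $\mu^\infty_0=\mu_0$ is recovered as in Step 3 of Theorem~\ref{thm:main_stability1}. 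Uniqueness of solutions to \eqref{eq:EVI} upgrades subsequential convergence to convergence of the whole sequence and defines $\sfS_t(\mu_0):=\mu^\infty_t$, the desired $\lambda$-gradient flow of $\mathrm{Ent}_\gamma$.

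I expect the main obstacle to be precisely this second step: making the $\Gamma$-convergence of the entropy rigorous across the moving spaces, in particular constructing the recovery sequences and controlling the distance terms when passing to the limit in \eqref{eq:cap1:78bis}, since both the metrics ${\hat\sfd}^h$ and the reference measures $\mathrm{Vol}^h$ are varying. The joint lower semicontinuity of the relative entropy settles the $\liminf$ side cleanly, but the recovery side and the verification that $\hat W_2^h(\mu^h_t,v^h)\to W_2(\mu^\infty_t,v)$ genuinely require the structure of measured Gromov-Hausdorff convergence rather than the fixed-space arguments underlying Theorem~\ref{thm:main_stability1}.
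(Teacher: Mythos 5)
Your proposal follows exactly the route the paper indicates: the paper gives no proof of this theorem beyond the remark that it follows by ``adapting the arguments of Theorem~\ref{thm:main_stability1}'' (with details deferred to \cite{Savare07}), and your steps --- generation of the flows on each $M^h$ via Theorem~\ref{thm:VRS}, $\Gamma$-convergence of the entropies across the coupled spaces, the compactness/EVI-limit scheme based on \eqref{eq:2bis}, \eqref{eq:cap1:78bis} and Theorem~\ref{thm:uniqueness}, and the uniqueness upgrade --- constitute precisely that adaptation. The points you flag as delicate (the recovery sequences across the varying spaces, and the convergence $\hat W_2^h(\mu^h_t,v^h)\to W_2(\mu^\infty_t,v^\infty)$, which rests on the fact that ${\hat\sfd}^h$ restricts to $\sfd$ on $X$ so that $\hat W_2^h$ agrees with $W_2$ on $\PP_2(X)$) are indeed where the real work lies, and your treatment of them is sound.
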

Applying Theorem \ref{eviconv} one finds in particular that the limit Entropy functional
$\mathrm{Ent}_\gamma$ is \emph{strongly}
geodesically $\lambda$-convex (at least when the support of $\gamma$ is $X$),
a stability result that has been proved by \cite{Sturm06I,Lott-Villani09}.

\subsubsection*{Intrinsic costruction}
Starting from Theorem \ref{thm:VRS}, \textsc{Sturm} \cite{Sturm06I} and \textsc{Lott-Villani}
\cite{Lott-Villani09} introduced the concept of metric-measure spaces $(X,\sfd,\gamma)$ satisfying
a lower Ricci curvature bound, by requiring that the relative entropy functional
$\mathrm{Ent}_\gamma$ is geodesically $\lambda$-convex in $\PP_2(X)$.
\begin{definition}[Lower Ricci curvature bounds for metric-measure spaces]
  \label{def:Ric}
  We say that a metric-measure space $(X,{\sfd},\gamma)$ has Ricci curvature bounded from below by a certain $\lambda\in\R$ (and we write $\mathrm{Ric}(X)\geq\lambda$) if the relative entropy $\mathrm{Ent}_{\gamma}$ is $\lambda$-geodesically convex on $X$.
\end{definition}

It is then natural to look for other intrinsic properties of $X$ which are sufficient to deduce
the existence of the associated EVI semigroup. It is interesting to notice that if the
relative entropy functional generates a $\lambda$-gradient flow $\sfS_t$ then $\sfS_t$ is {a}
semigroup of \emph{linear} operators \cite{Savare07}.
In the case of compact positively curved (PC) Alexandrov spaces
the existence of a $\lambda$-contracting gradient flow can be deduced by a general
unpublished result of \cite{Perelman-Petrunin} and has been recently proved by
\textsc{Ohta} \cite{Ohta07}.

In more general cases, we can apply Theorem \ref{genteofinal}:
\begin{theorem}
  Let us suppose that $(X,\sfd,\gamma)$ is a complete and separable metric-measure space with
  Ricci curvature bounded from below, according to Definition \ref{def:Ric}, and measure $\gamma$ with full support
  $\supp(\gamma)=X$.
  If $X$ satisfies the Local Angle Condition \ref{defi:lac} and it is $\mathrm{K}$-semiconcave {as in} \ref{defi:ksc}, then
  the relative entropy functional $\mathrm{Ent}_\gamma$ generates
  a $\lambda$-gradient flow on $\PP_2(X)$ which can be uniquely extended to a Markov semigroup
  (i.e.\ linear, order preserving, strongly continuous, contractive)
  in every space $L^p(\gamma)$, $p\in [1,+\infty)$.  
\end{theorem}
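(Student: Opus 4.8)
The plan is to prove the statement in two stages: first produce the $\lambda$-gradient flow $\sfS_t$ of $\mathrm{Ent}_\gamma$ on $\PP_2(X)$ by invoking the abstract generation Theorem~\ref{genteofinal}, and then promote $\sfS_t$ to a Markov semigroup on the scale $L^p(\gamma)$ by exploiting its linearity.

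For the first stage I would record that $\mathrm{Ent}_\gamma$ is proper and lower semicontinuous on $\PP_2(X)$, with domain whose $W_2$-closure is all of $\PP_2(X)$ thanks to $\supp(\gamma)=X$. The hypothesis $\mathrm{Ric}(X)\ge\lambda$ is, by Definition~\ref{def:Ric}, precisely the assertion that $\mathrm{Ent}_\gamma$ is geodesically $\lambda$-convex. It then remains to supply the two geometric hypotheses of Theorem~\ref{genteofinal} for the base space $(\PP_2(X),W_2)$: by the inheritance properties recorded among the examples following Definitions~\ref{defi:ksc} and \ref{defi:lac}, $\PP_2(X)$ is $\mathrm{K}$-SC precisely because $X$ is, and $\PP_2(X)$ satisfies (LAC) precisely because $X$ does, so in particular $D(\mathrm{Ent}_\gamma)$ carries both properties. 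Applying Theorem~\ref{genteofinal} yields, for each initial datum, a unique solution of $\EVI{\PP_2(X)}{W_2}{\mathrm{Ent}_\gamma}\lambda$, i.e.\ the $\lambda$-gradient flow $\sfS_t$ in the sense of Definition~\ref{def:GFlow}; it automatically inherits every property of Theorem~\ref{thm:main1}, in particular the contraction $W_2(\sfS_t\mu,\sfS_t\nu)\le\rme^{-\lambda t}W_2(\mu,\nu)$.

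For the second stage the decisive input is that the entropy flow is \emph{linear} on densities: writing $\mu=\rho\gamma$ and setting $\sfS_t\mu=:(P_t\rho)\gamma$, the map $P_t$ is a linear operator \cite{Savare07}. Three elementary features then make $P_t$ a Markov semigroup. First, $P_t$ is positivity preserving, since $\sfS_t$ sends probability measures to probability measures, and by linearity this upgrades to order preservation. Second, $P_t$ preserves mass, $\int_X P_t\rho\,\d\gamma=\int_X\rho\,\d\gamma$, because $\sfS_t\mu\in\PP_2(X)$; splitting $\rho=\rho^+-\rho^-$ and using positivity gives the $L^1(\gamma)$-contraction. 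Third, since $\gamma$ is the equilibrium of the flow (it minimizes $\mathrm{Ent}_\gamma$ and hence has vanishing slope), $\gamma$ is a fixed point, i.e.\ $P_t 1=1$; combined with positivity this yields $\|P_t\rho\|_{L^\infty}\le\|\rho\|_{L^\infty}$. With the $L^1$- and $L^\infty$-contraction bounds in hand, the Riesz--Thorin interpolation theorem produces a contraction on every $L^p(\gamma)$, $p\in[1,+\infty)$. Linearity together with the density of $D(\mathrm{Ent}_\gamma)$ lets these operators, defined a priori on densities of measures in $\PP_2(X)$, extend uniquely and consistently to all of $L^p(\gamma)$; strong continuity is inherited from the $W_2$-continuity of $t\mapsto\sfS_t\mu$ furnished by Theorem~\ref{thm:main1}, and contractivity forces uniqueness of the extension.

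The genuine difficulty, which I would treat as the heart of the argument, is the linearity of $P_t$. The EVI theory of Section~\ref{sect:genteo} delivers a contraction only in the Wasserstein metric $W_2$, and there is no formal reason for a $W_2$-contraction of an a priori nonlinear flow to convert into an $L^1$- or $L^\infty$-contraction of a linear operator. It is exactly the special variational structure of the relative entropy, encoded in the linearity result of \cite{Savare07}, that bridges the metric and the functional-analytic pictures; once linearity and the accompanying positivity and mass-conservation structure are secured, the $L^p$ contraction estimates, the interpolation, and the strong continuity are routine.
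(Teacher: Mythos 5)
Your first stage is exactly the route the paper intends: the paper gives no proof of this theorem beyond presenting it as a direct application of Theorem \ref{genteofinal} (with the hypotheses $\mathrm{K}$-SC and (LAC) transferred from $X$ to $\PP_2(X)$ by the inheritance statements listed after Definitions \ref{defi:ksc} and \ref{defi:lac}, and $\lambda$-geodesic convexity of $\mathrm{Ent}_\gamma$ supplied by Definition \ref{def:Ric}), together with the linearity of the flow, which the paper also defers to \cite{Savare07}. Your observation that $\supp(\gamma)=X$ gives $\overline{D(\mathrm{Ent}_\gamma)}=\PP_2(X)$ is the right one, and the architecture of your second stage (linearity $\Rightarrow$ order preservation; mass conservation $\Rightarrow$ $L^1$-contraction; $P_t1=1$ $\Rightarrow$ $L^\infty$-bound; interpolation; density; continuity) is the standard one.

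There is, however, a genuine gap in your $L^\infty$ step. The argument ``$\gamma$ minimizes $\mathrm{Ent}_\gamma$, hence is a fixed point, hence $P_t1=1$'' presupposes $\gamma\in\PP_2(X)$; otherwise $\gamma$ is not even a point of the metric space on which $\FlowName_t$ acts, and the statement is meaningless. Finiteness of the second moment of $\gamma$ is not among the hypotheses, and it can fail when $\lambda<0$: take $X=\R$ and $\gamma=c\,(1+x^2)^{-3/2}\LL^1$. Then $V(x):=-\log\bigl(\d\gamma/\d\LL^1\bigr)=\tfrac32\log(1+x^2)+\mathrm{const}$ satisfies $V''\ge -3/8$, so $\mathrm{Ent}_\gamma(\mu)=\int\rho\log\rho\,\dx+\int V\,\d\mu$ is geodesically $(-3/8)$-convex, i.e.\ $\mathrm{Ric}(X)\ge-3/8$; moreover $\R$ is $\mathrm{K}$-SC with $\mathrm{K}=1$, satisfies (LAC), and $\supp\gamma=\R$; yet $\int x^2\,\d\gamma=+\infty$, so $\gamma\notin\PP_2(\R)$. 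In this admissible situation your justification of $P_t1=1$ collapses, and with it the $L^\infty$ endpoint, the Riesz--Thorin step, and the $L^p$-contraction for $p>1$. The gap is repairable --- e.g.\ by monotone approximation of $1$ by truncated densities with bounded support combined with order preservation, or by first proving $\gamma$-symmetry of $P_t$ and dualizing the $L^1$-bound --- but some such argument must be supplied; when $\gamma\in\PP_2(X)$ (for instance when $X$ is bounded, or when $\lambda>0$) your argument is fine. A second, more minor, gloss: strong continuity in $L^p(\gamma)$ is not simply ``inherited'' from the $W_2$-continuity of $t\mapsto\FlowName_t\mu$, since $W_2$-convergence only yields narrow convergence of the measures $(P_t\rho)\gamma$, not norm convergence of the densities; one needs to combine the contraction bound with weak convergence and uniform convexity of $L^p$ for $1<p<\infty$, and then a density argument for $p=1$.
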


% \bibliographystyle{ams}
%\bibliography{bibliografia2009.bib}
%\bibliographystyle{plain}

\def\cprime{$'$} \def\cprime{$'$}

\end{document}